\newtheorem{thm}{Theorem}[section]
\newtheorem{theorem}[thm]{Theorem}
\newtheorem{corollary}[thm]{Corollary}
\newtheorem{lemma}[thm]{Lemma}
\newtheorem{proposition}[thm]{Proposition}
\newtheorem{question}[thm]{Question}
\theoremstyle{definition}
\newtheorem{definition}[thm]{Definition}
\theoremstyle{remark}
\newtheorem{remark}[thm]{Remark}
\newtheorem{example}[thm]{Example}
\newcommand{\mc}{\mathcal}
\renewcommand{\theta}{\vartheta}
\newcommand{\set}[1]{\left\{#1\right\}}
\newcommand{\scal}[2]{{\langle #1, #2 \rangle}}
\newcommand{\de}{\partial}
\newcommand{\del}{\partial}
\DeclareMathOperator{\im}{im}
\DeclareMathOperator{\Int}{Int}
\newcommand{\Z}{\mathbb{Z}}
\newcommand{\F}{\mathbb{F}}
\newcommand{\nbd}{\mathcal{N}} 
\newcommand{\A}{\mathbb{A}}
\newcommand{\CP}{\mathbb{CP}^2}
\newcommand{\bCP}{\overline{\mathbb{CP}^2}}
\DeclareMathOperator{\HF}{HF}
\newcommand{\HFi}{\HF^\infty}
\newcommand{\HFp}{\HF^+}
\newcommand{\HFm}{\HF^-}
\newcommand{\HFred}{\HF_{\mathrm{red}}}
\DeclareMathOperator{\CFK}{CFK}
\newcommand{\CFKi}{\CFK^\infty}
\newcommand{\red}{\operatorname{red}}
\newcommand{\w}{\mathbf{w}}
\newcommand{\x}{\mathbf{x}}
\newcommand{\z}{\mathbf{z}}
\newcommand{\gr}{\mathbf{gr}}
\DeclareMathOperator{\Spin}{Spin}
\newcommand{\spinc}{spin$^c$\xspace}
\newcommand{\Spinc}{\Spin^c}
\newcommand{\s}{\mathfrak{s}}
\renewcommand{\t}{\mathfrak{t}}
\newcommand{\Arf}{\operatorname{Arf}}
\newcommand{\Slice}{\mathscr{S}}
\newcommand{\Knots}{\mathscr{K}}
\newcommand{\Xo}{X^{\circ}}
\newcommand{\st}{\operatorname{st}}
\newcommand{\BF}{\mathit{BF}}
\newcommand{\mK} {\mkern3mu \overline{\mkern-3mu K \mkern-1mu} \mkern1mu}  
\newcommand{\mX} {\mkern3mu \overline{\mkern-3mu X \mkern-1mu} \mkern1mu}
\newcommand{\KT}{\mathrm{K3}}
\newcommand{\Wh}{\mathrm{Wh}}
\newcommand{\Tors}{\mathrm{Tors}}
\DeclareMathOperator{\rk}{rank}
\DeclareMathOperator{\coker}{coker}
\newcommand{\Tower}{\mathcal{T}}
\newcommand{\Se}{S^1_{!}}
\newcommand{\ks}{\operatorname{ks}}
\newcommand{\KDV}{K_{\operatorname{DV}}}
\DeclareMathOperator{\dsh}{D}
\newcommand\intB{\smash{\mathring{B}^4}}
\newcommand\slbar{\overline{\mathit{sl}}}
\begin{document}

\title[Relative genus bounds in indefinite four-manifolds]{Relative genus bounds in indefinite four-manifolds}

\author{Ciprian Manolescu}%
\address {Department of Mathematics, Stanford University, 450 Jane Stanford Way\\ 
Stanford, CA 94305, USA}
\email {\href{mailto:cm5@stanford.edu}{cm5@stanford.edu}}
\thanks{CM was supported by NSF grant DMS-2003488 and a Simons Investigator Award.}

\author{Marco Marengon}%
\address{HUN-REN Alfr{\'e}d R{\'e}nyi Institute of Mathematics, R{\'e}altanoda utca 13-15\\ 1053 Budapest, Hungary}%
\email{\href{mailto:marengon@renyi.hu}{marengon@renyi.hu}}%
\thanks{MM was supported by NSF FRG grant DMS-1563615 and the Max Planck Institute for Mathematics.}

\author{Lisa Piccirillo}%
\address{Department of Mathematics, University of Texas at Austin, 2515 Speedway\\ 
Austin, TX 78712, USA}%
\email{\href{mailto:lisa.piccirillo@austin.utexas.edu}{lisa.piccirillo@austin.utexas.edu}}
\thanks{LP was supported by NSF postdoctoral fellowship DMS-1902735 and the Max Planck Institute for Mathematics} 


\begin{abstract}
Given a closed four-manifold $X$ with an indefinite intersection form, we consider smoothly embedded surfaces in $X \setminus \intB$, with boundary a knot $K \subset S^3$. We give several methods to bound the genus of such surfaces in a fixed homology class. Our tools include adjunction inequalities and the $10/8 + 4$ theorem. In particular, we present obstructions to a knot being H-slice (that is, bounding a null-homologous disk) in a four-manifold  and show
that the set of H-slice knots can detect exotic smooth structures on closed $4$-manifolds.
\end{abstract}
\maketitle

\section{Introduction}
A fundamental problem in four-dimensional topology is to find the minimal genus of embedded surfaces in a four-manifold, in a given homology class. For example, the Thom conjecture \cite{KM-thom} and the symplectic Thom conjecture \cite{os-thom} were problems of this type; their solutions rank among the major successes of gauge theory.
A relative version of the same problem concerns bounding the genus of properly embedded surfaces $\Sigma$ in a four-manifold $W$ with boundary, when $\del \Sigma$ is a given knot $K \subset \del W$ and the relative homology class of $\Sigma$ is fixed. 

We will focus on the case where the four-manifold has boundary $S^3$. We let $X$ be a closed, connected, oriented, smooth four-manifold, and consider properly embedded surfaces in 
$\Xo := X\setminus \intB,$
with boundary a classical knot $K \subset S^3$. One problem of interest is whether $K$ bounds a null-homologous disk in $\Xo$; if so, we say that $K$ is {\em H-slice in $X$}.

When $X=S^4$, the problem reduces to the well-known question of finding the four-ball genus of knots, and in particular of determining which knots are slice. More generally, when $X$ has definite intersection form, many of the gauge theoretic techniques for bounding the genus of embedded surfaces still apply; see \cite{os-tau, km, HR}. When $X =  \#^n \CP$ or $\#^n \bCP$, there are also bounds from Khovanov homology \cite{MMSW}.

Less is known about relative genus bounds in more complicated, indefinite four-manifolds, such as the K3 surface or complex surfaces of general type. Classical methods produce topological constraints (that apply equally well for surfaces embedded in a locally flat way in a topological four-manifold). We will review these in Section~\ref{sec:top}. They include constraints from the Arf invariant \cite{robertello, FK, KirbyTaylor}, from the Tristram-Levine signatures \cite{ConwayNagel}, and from a theorem of Rokhlin \cite{Rokhlin}.

The main purpose of this paper is to use gauge theory and Heegaard Floer homology to develop new techniques for bounding the genus of smoothly embedded surfaces with boundary, in indefinite four-manifolds.

Inside $4$-manifolds with non-trivial Seiberg-Witten (or Ozsv\'ath-Szab\'o) invariants, the genus of closed surfaces can be bounded using the adjunction inequalities from \cite{KM-thom}, \cite{MST}, \cite{os-thom}, \cite{os4}. This can be leveraged to bound the genus of surfaces $\Sigma \subset \Xo$ with boundary a knot $K$: by capping off $\Sigma$ with a smooth surface $F$ in some manifold $Z$ with $\del Z\supset S^3$ and $\del F =\mK$ (the mirror of $K$), we can apply the adjunction inequality in $\Xo \cup Z$ to the resulting closed surface $\Sigma \cup F$. 

The simplest way to do this is to take $Z=B^4$ and $F$ a 
surface in $B^4$ with boundary $\mK$. 
This gives an inequality of the following form, involving the slice genus $g_4(K)$ of the knot:
\begin{equation}
\label{eq:adjunction-g4}
\scal{c_1(\s)}{[\Sigma]} + [\Sigma]^2 \leq 2g(\Sigma)-2 + 2g_4(K).
\end{equation}
(See Theorem~\ref{thm:adjunction-g4} below.) Here and throughout the paper, $[\Sigma]^2$ is defined by identifying the two homology groups $H_2(\Xo, \del\Xo) \cong H_2(X)$; that is, $[\Sigma]^2 = [\Sigma \cup F]^2.$ 

A stronger bound can be obtained by taking $Z$ to be the cobordism associated to some surgery on $K$, and letting $F$ be the core of the $2$-handle. This improves the inequality \eqref{eq:adjunction-g4} to one involving the concordance invariant $\nu^+$ from knot Floer homology \cite{HW}. We obtain the following relative adjunction inequality (see Section \ref{sec:adj} for an introduction to $\Phi_{X, \s}$ and manifolds of Ozsv\'ath-Szab\'o simple type):
\begin{theorem}
\label{thm:adjunction-nu+}
Let $X$ be a closed 4-manifold, with $b_2^+(X)>1$. Let $\Sigma \subset \Xo$ be a smooth, properly embedded surface with $g(\Sigma) > 0$ and $\del \Sigma = K$, and let $\mK$ denote the mirror of $K$. Suppose that either  $[\Sigma]^2 \geq 2\nu^+(\mK)$ or $X$ is of Ozsv\'ath-Szab\'o simple type. Then, for every \spinc structure $\s \in \Spinc(X)$ for which the mixed invariant $\Phi_{X, \s}$ is non-zero, we have
\[
\scal{c_1(\s)}{[\Sigma]} + [\Sigma]^2 \leq 2g(\Sigma)-2 + 2\nu^+(\mK).
\]
\end{theorem}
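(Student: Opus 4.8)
The plan is to reduce the relative inequality to the adjunction inequality for closed surfaces and the mixed invariant $\Phi_{X,\s}$ recalled in Section~\ref{sec:adj}, by capping off $\Sigma$ with a genus-zero surface coming from a surgery cobordism rather than with a surface in $\intB$. Concretely, I would attach a $2$-handle to $\Xo$ along the mirror $\mK \subset S^3 = \del \Xo$ with framing $-2\nu^+(\mK)$; its core is a disk $F$ with $\del F = \mK$ and relative self-intersection $[F]^2 = -2\nu^+(\mK)$, living in the trace cobordism $Z$. (The mirror appears exactly because gluing two $4$-manifolds along their $S^3$ boundaries reverses orientation.) Gluing gives $\Xhat := \Xo \cup_{S^3} Z$ containing the closed surface $\widehat\Sigma := \Sigma \cup_{\mK} F$, with $g(\widehat\Sigma) = g(\Sigma) > 0$. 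For a suitable extension of $\s$ over $Z$ — choosing the symmetric \spinc structure, which is possible since $[F]^2 = -2\nu^+(\mK)$ is even — one arranges
\[
[\widehat\Sigma]^2 = [\Sigma]^2 - 2\nu^+(\mK), \qquad \scal{c_1(\s)}{[\widehat\Sigma]} = \scal{c_1(\s)}{[\Sigma]}.
\]

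Granting that $\Xhat$ can be completed to a closed $4$-manifold with $b_2^+ > 1$ whose mixed invariant in the extended \spinc structure remains non-zero, I would apply the closed adjunction inequality to $\widehat\Sigma$. Since $g(\widehat\Sigma) > 0$, this yields
\[
\scal{c_1(\s)}{[\widehat\Sigma]} + [\widehat\Sigma]^2 \leq 2g(\widehat\Sigma) - 2,
\]
valid provided either $[\widehat\Sigma]^2 \geq 0$ or $X$ is of Ozsv\'ath--Szab\'o simple type. Substituting the two identities above turns this into precisely
\[
\scal{c_1(\s)}{[\Sigma]} + [\Sigma]^2 \leq 2g(\Sigma) - 2 + 2\nu^+(\mK),
\]
as desired. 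The dichotomy in the hypothesis is exactly what makes the closed inequality applicable: if $[\Sigma]^2 \geq 2\nu^+(\mK)$ then $[\widehat\Sigma]^2 \geq 0$ and no extra assumption is needed, while otherwise the simple-type assumption removes the positivity requirement on $[\widehat\Sigma]^2$.

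The main obstacle is the step I granted, namely that completing the surgered boundary to a closed manifold does not destroy the non-vanishing of the mixed invariant. This is the one place where $\nu^+$ must enter in place of the slice genus $g_4(\mK)$: the value $2\nu^+(\mK)$ is the smallest framing defect for which the cobordism map induced by $Z$, in the symmetric \spinc structure, is non-trivial on the relevant tower, by the characterization of $\nu^+$ through the surgery formula \cite{HW}. I would make this precise by expressing $\Phi_{X,\s}$ as a pairing of relative Ozsv\'ath--Szab\'o invariants across a $3$-manifold obtained by surgery on $\mK$, and showing that the cobordism map associated to $Z$ carries the relative invariant of $\Xo$ to a non-zero class precisely when the threshold $\nu^+(\mK)$ is reached; this non-vanishing in the symmetric \spinc structure is the content of Hom--Wu's definition of $\nu^+$. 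The remaining work — tracking framings and \spinc extensions, verifying the two self-intersection identities, and checking that the completed manifold retains $b_2^+ > 1$ — is routine bookkeeping that I would carry out after isolating this Floer-theoretic input.
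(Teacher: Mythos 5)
Your geometric construction coincides with the paper's: the paper attaches a $(-n)$-framed $2$-handle along $K$ (note: along $K$, not $\mK$ --- the mirror enters only on the Floer-theoretic side, via $S^3_{-n}(K) = -S^3_n(\mK)$ and the invariants $V_s(\mK)$), caps $\Sigma$ off with the core, and applies an adjunction inequality; your framing $-2\nu^+(\mK)$ together with the symmetric \spinc extension is exactly the paper's optimizing choice (their $i = n-\nu^+(\mK)$ with $n = 2\nu^+(\mK)$), and your two identities and the final arithmetic are correct.

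The genuine gap is precisely the step you grant. There is no general way to complete $\Xhat = \Xo \cup_{S^3} Z$ to a \emph{closed} $4$-manifold whose mixed invariant survives: that would require a filling of $S^3_{-2\nu^+(\mK)}(K)$ with controlled Floer-theoretic behavior, and none is available. The paper never closes up; instead it proves an adjunction inequality for closed surfaces in $4$-manifolds \emph{with boundary} (Theorem~\ref{thm:OSz-adjunction-bdy}) and applies it to $\Xhat$, whose relative mixed invariant factors as $F^+ \circ \Phi_{\Xo,\s}$, where $F^+$ is the surgery-cobordism map into $\HFp(S^3_{-n}(K))$. The non-vanishing one then needs is that $F^+$ is nonzero on the bottom element of the tower, and, contrary to your last paragraph, this is not ``the content of Hom--Wu's definition of $\nu^+$'': it is established by computing the grading shift of $F^+$ and comparing it with $d(S^3_{-n}(K), \t_{-i})$ via Ni--Wu's surgery formula \cite{NiWu}; that comparison is where $V_s(\mK)$, hence $\nu^+(\mK)$, actually enters. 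Two further points fail without additional argument. First, in the simple-type branch, simple type of $X$ says nothing about any closed completion; the paper instead defines \emph{relative} Ozsv\'ath--Szab\'o simple type (Definition~\ref{def:relsimple}) and proves by a separate grading argument that $\Xhat$ inherits it from $X$ --- without this, case (b) of the adjunction inequality cannot be invoked. Second, whenever $\nu^+(\mK)=0$ your framing is $0$, so the boundary is $S^3_0(K)$, which has two towers in its torsion \spinc structure, and Ni--Wu's formula (valid for $n>0$) does not apply; the paper faces this degenerate situation only when $[\Sigma]^2=0$ and $X$ is not of simple type (because it allows any $n \geq \max\{1, 2\nu^+(\mK)\}$, resp.\ $0 < n \leq [\Sigma]^2$), and it resolves it using Gainullin's computation of $\HFp(S^3_0(K))$ \cite{Gainullin} together with \cite[Proposition 9.3]{OSzAbsolutely}. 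Your fixed choice of framing makes this degenerate case unavoidable for every knot with $\nu^+(\mK)=0$, so it cannot be dismissed as bookkeeping.
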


\begin{remark}
A different relative adjunction inequality, in terms of the concordance invariant $\tau$, was proved by Ozsv\'ath and Szab\'o \cite[Theorem 1.1]{os-tau}. Theirs applies to surfaces in negative definite $4$-manifolds, whereas ours is for $4$-manifolds with $b_2^+(X) > 1$. 
\end{remark}


Theorem~\ref{thm:adjunction-nu+} gives non-trivial obstructions for surfaces $\Sigma$ of positive genus and with $[\Sigma]\neq 0$, but does not say anything about H-sliceness in $X$. Instead, we can get constraints on H-sliceness by filling $\Xo$ with suitable symplectic manifolds, and using adjunction inequalities based on the Bauer-Furuta invariants:

\begin{theorem}
\label{thm:symp}
Let $X$ and $X'$ be closed symplectic $4$-manifolds satisfying $b_2^+(X) \equiv b_2^+(X') \equiv 3 \!\! \pmod{4}$. Suppose that a knot $K \subset S^3$ is such that the mirror $\mK$ bounds a smooth, properly embedded disk $\Delta \subset \Xo$ with $[\Delta]^2 \geq 0$ and $[\Delta] \neq 0$. Then $K$ is not H-slice in $X'$.
\end{theorem}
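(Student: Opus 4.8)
The plan is to argue by contradiction, reducing the statement to an adjunction constraint coming from the Bauer--Furuta invariant of a connected sum of symplectic manifolds. Suppose $K$ were H-slice in $X'$, so that $K$ bounds a smooth null-homologous disk $D \subset (X')^\circ$. Gluing $\Xo$ to $(X')^\circ$ along an orientation-reversing diffeomorphism of their common boundary $S^3$ produces the closed manifold $X \# X'$, and under this identification the knot $K \subset \del (X')^\circ$ is matched with its mirror $\mK \subset \del \Xo = S^3$. Hence $\del D = K$ is glued to $\del \Delta = \mK$, and $S := \Delta \cup_K D$ is a smoothly embedded $2$-sphere in $X \# X'$. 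Using the orthogonal splitting $H_2(X \# X') \cong H_2(X) \oplus H_2(X')$, and the fact that $D$ is null-homologous (so $[D] = 0$ and $[D]^2 = 0$), we get $[S] = [\Delta] \neq 0$ and $[S]^2 = [\Delta]^2 \geq 0$ by additivity of self-intersection numbers under the gluing.

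The next step is to show that $X \# X'$ has a nonvanishing $\mathrm{Pin}(2)$-equivariant Bauer--Furuta invariant $\BF$ in a suitable stable cohomotopy group. By Taubes, each symplectic manifold $X$, $X'$ has nonzero Seiberg--Witten invariant for its canonical \spinc structure, and Bauer's computation identifies the corresponding $\BF$ class precisely; the hypothesis $b_2^+ \equiv 3 \pmod 4$ places each class in the range where it is detected by a nonzero power of $\eta$ in the stable stems. Bauer's connected-sum formula $\BF(X \# X') \simeq \BF(X) \wedge \BF(X')$ then identifies $\BF(X \# X')$ with the smash product of these two classes, and the point is that the product is again nonzero because the relevant low powers of $\eta$ do not vanish. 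Crucially, this nonvanishing persists even though the numerical Seiberg--Witten invariants of $X \# X'$ are zero, which is exactly why the Bauer--Furuta refinement is needed.

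With $\BF(X \# X') \neq 0$ in hand, the final step is to apply the Bauer--Furuta adjunction inequality to the essential sphere $S$. For the \spinc structure $\s$ whose invariant is nonzero, the inequality bounds $\scal{c_1(\s)}{[S]} + [S]^2 \leq 2g(S) - 2 = -2$; applying the same inequality to $S$ with reversed orientation, equivalently to the conjugate structure $\bar{\s}$ (whose invariant is also nonzero), yields $-\scal{c_1(\s)}{[S]} + [S]^2 \leq -2$. Adding these gives $[S]^2 \leq -2$, contradicting $[S]^2 = [\Delta]^2 \geq 0$. Should the available form of the adjunction inequality apply only to spheres of square zero, one first blows up $X \# X'$ at $[\Delta]^2$ interior points of $S$, replacing it by a square-zero essential sphere in $X \# X' \# [\Delta]^2\,\bCP$; since blow-ups of symplectic manifolds are symplectic and do not change $b_2^+ \bmod 4$, this is again a connected sum of two symplectic pieces with nonvanishing $\BF$.

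I expect the main obstacle to be the nonvanishing of the smashed class $\BF(X) \wedge \BF(X')$: this is where the congruence $b_2^+ \equiv 3 \pmod 4$ is essential, as it is precisely this condition that pins down the $\eta$-divisibility of each factor and guarantees that their smash product survives in the stable cohomotopy group. By contrast, the geometric input (the gluing and the homology computation) is routine, and the adjunction inequality functions as a black box once the invariant is known to be nonzero. The real work lies in the equivariant stable-homotopy computation underlying Bauer's connected-sum nonvanishing, together with checking that its output lands in the degree in which the adjunction inequality operates.
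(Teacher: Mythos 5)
Your strategy coincides with the paper's: glue the hypothetical H-slice disk in $(X')^\circ$ to $\Delta$ to obtain a smoothly embedded sphere $S \subset X \# X'$ with $[S]=[\Delta]\neq 0$ and $[S]^2=[\Delta]^2\geq 0$; use Taubes' theorem and Bauer's connected-sum formula, together with the congruence $b_2^+\equiv 3 \pmod 4$, to show that the Bauer--Furuta invariant of the connected sum is $\eta^2\neq 0$ (this is exactly Proposition~\ref{prop:bf}, and the nonequivariant invariant suffices---the $\mathrm{Pin}(2)$-equivariant refinement you invoke is not needed); then rule out the sphere $S$. The gap is in this last step.

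You rule out $S$ by applying an adjunction inequality in genus form, $\scal{c_1(\s)}{[S]}+[S]^2\leq 2g(S)-2=-2$, to the genus-zero surface $S$ and its conjugate, and adding. But no adjunction inequality of this form is valid for spheres: every version (Theorems~\ref{thm:SW-adjunction}, \ref{thm:OSz-adjunction}, \ref{thm:OSz-adjunction-bdy}, and any Bauer--Furuta analogue) requires $g(\Sigma)>0$, since the long-neck/vanishing arguments behind them break down in genus zero. What is true for spheres is a non-existence statement of Fintushel--Stern type (Theorem~\ref{thm:FS}): nonvanishing invariants preclude embedded spheres with $[S]^2\geq 0$ and $[S]\neq 0$. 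Deriving your displayed inequality from that statement would be circular, since non-existence of $S$ is precisely what you are trying to prove. Moreover, in the Bauer--Furuta setting this sphere theorem is not an off-the-shelf black box: all Seiberg--Witten invariants of $X\# X'$ vanish, so Fintushel--Stern itself does not apply, and the paper has to prove the analogue (Theorem~\ref{thm:BFadj}) by re-running Fintushel--Stern's argument with three ingredients adapted to $\BF$---solvability of the Seiberg--Witten equations for every metric and perturbation when $\BF\neq 0$, a blow-up formula for $\BF$ deduced from Bauer's connected-sum theorem, and finiteness of the \spinc structures with nonzero $\BF$. Your fallback of blowing up $[\Delta]^2$ times only lowers the self-intersection; it does nothing about the genus, so it does not close the gap. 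To complete your argument you must prove (or correctly cite) the sphere-adjunction theorem for Bauer--Furuta invariants, i.e., Theorem~\ref{thm:BFadj}; the rest of your proposal matches the paper's proof.
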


From here we obtain the following application.

\begin{corollary}
\label{cor:exotica}
There exist smooth, homeomorphic four-manifolds $X$ and $X'$ and a knot $K \subset S^3$ that is H-slice in $X$ but not in $X'$. For example, one can take $$X=\#3\CP \# 20 \bCP, \ \ X' = K3 \# \bCP,$$ and $K$ to be the right-handed trefoil. 
\end{corollary}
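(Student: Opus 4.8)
The plan is to exhibit the specific knot and manifolds and verify the two halves of the statement separately: that the right-handed trefoil $T$ is H-slice in $X = \#3\CP\#20\bCP$, and that it is \emph{not} H-slice in $X' = \KT \# \bCP$.

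For the H-sliceness in $X$, I would look for an explicit null-homologous disk. The natural approach is to observe that the trefoil bounds a disk with a small number of positive double points, or becomes slice after adding a few $\bCP$ summands, and then resolve those singularities using the extra $\CP$ or $\bCP$ factors. Concretely, blowing up at a positive double point of an immersed disk replaces that point with a copy of $\bCP$ and lets one tube the disk into the exceptional sphere; the right-handed trefoil is unknotting-number one, so a single blow-up should produce an embedded disk, and one then arranges the homology class to be trivial by tubing into the other $\CP$ and $\bCP$ summands so that the algebraic intersection with every generator of $H_2$ vanishes. The point of having so many $\CP$ and $\bCP$ summands ($b_2^+ = 3$, $b_2^- = 20$) is to give enough room both to perform the resolution and to cancel the homology class; I would make this count explicit, checking that three positive and twenty negative classes suffice.

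For the non-H-sliceness in $X'$, I would apply Theorem~\ref{thm:symp}. The hypothesis of that theorem asks for two closed symplectic four-manifolds with $b_2^+ \equiv 3 \pmod 4$; here the candidate is $X' = \KT \#\bCP$, which has $b_2^+(\KT\#\bCP) = b_2^+(\KT) = 3$, so the congruence holds. To invoke the theorem I must produce a symplectic $X$ together with a disk $\Delta \subset \Xo$ bounded by the mirror $\mK = \m{T}$ (the left-handed trefoil), with $[\Delta]^2 \ge 0$ and $[\Delta]\neq 0$; then the conclusion is exactly that $T = K$ is not H-slice in $X'$. The left-handed trefoil bounds a disk of self-intersection $+1$ and nonzero homology class inside (a blow-up of) a suitable symplectic manifold with $b_2^+\equiv 3\pmod 4$—for instance one can take a minimal symplectic surface, realize $\m T$ as the link of a singularity or as a curve that becomes embedded after one positive blow-up, and verify the self-intersection and homology conditions. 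Thus the existence of such a $(X,\Delta)$ feeds directly into Theorem~\ref{thm:symp} to give non-H-sliceness.

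The main obstacle will be the positivity bookkeeping in the non-H-slice direction: Theorem~\ref{thm:symp} requires that the disk bounded by the \emph{mirror} have \emph{nonnegative} self-intersection and nonzero class, and these signs are rigid. One cannot simply tube to kill homology (as in the H-slice direction) without risking the loss of nonnegativity or of symplecticity, so I expect the delicate step to be constructing a symplectic $X$ in which $\m{T}$ bounds a disk of exactly the right (nonnegative, nonzero) homological type. I would handle this by choosing $X$ so that $\m T$ arises as the boundary of an obviously positive-square surface—e.g. a section or fiber component after a single positive blow-up—and then checking the numerical conditions directly. Once both halves are in place, the corollary follows, since $X$ and $X'$ are homeomorphic (both are simply connected, non-spin, with the same $b_2^\pm$ and hence the same intersection form by Freedman's theorem) yet are distinguished smoothly by whether $T$ is H-slice.
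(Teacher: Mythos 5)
Your overall architecture matches the paper's (exhibit a null-homologous disk in $X$; apply Theorem~\ref{thm:symp} to an auxiliary symplectic manifold containing a disk for the mirror to rule out $X'$), and your Freedman/homeomorphism remark is correct. But both pillars of your argument have genuine gaps.

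First, the H-sliceness of the right-handed trefoil in $X=\#3\CP\#20\bCP$ cannot be obtained by the mechanism you describe. Resolving the unknotting crossing of the trefoil by a blow-up produces an embedded disk whose class is $\pm 2E$, where $E$ is the exceptional class of the summand used: at a crossing of the trefoil the two strands are \emph{parallel}, so the encircling $\pm1$-framed circle has linking number $\pm2$ with the knot, and the resulting disk is exactly the class-$2H$ disk of Example~\ref{ex:t3} --- not a null-homologous one. Your proposed repair, tubing into spheres in the other summands, is homologically impossible: to kill the class you would need to tube into a closed surface $S$ with $[S]=\mp 2E$, but then $[S]\cdot[\Delta]=\pm 4\neq 0$, so no such surface can be made disjoint from the disk, while spheres from the other summands only add classes orthogonal to $E$ and never touch the $E$-component. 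The paper avoids this by a different idea (Example~\ref{ex:RHT}): in $\CP$ there is a twist region of the left-handed trefoil where the two strands are \emph{anti-parallel}, so the $+1$-framed handle has linking number $0$ with the knot; sliding over it unknots the trefoil and traces a \emph{null-homologous} annulus in $\CP\setminus(\intB\sqcup\intB)$. Hence the right-handed trefoil is H-slice already in $\CP$, and therefore in $X$.

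Second, for the non-H-sliceness in $X'=\KT\#\bCP$ you correctly reduce to producing a symplectic manifold with $b_2^+\equiv 3\pmod 4$ in which the \emph{left-handed} trefoil bounds a disk with $[\Delta]^2\geq 0$ and $[\Delta]\neq 0$, but you do not produce it, and the constructions you gesture at cannot work: links of singularities and symplectic/holomorphic curves yield \emph{positive} knots (the link of the cusp $z^2+w^3$ is the right-handed trefoil), so they will never bound the left-handed trefoil. The paper's construction is elementary: the standard handle diagram of the $\KT$ surface (Figure~\ref{fig:K3}) contains a $0$-framed right-handed trefoil, and the core of that $2$-handle gives, after adjusting boundary orientations, a disk in $\KT\setminus\intB$ for the left-handed trefoil with square $0$ and nonzero class (Example~\ref{ex:LHTinK3}). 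Feeding this into Theorem~\ref{thm:symp}, with the theorem's two symplectic manifolds being $\KT$ and its blow-up $\KT\#\bCP$ (both projective surfaces with $b_2^+=3$), gives the conclusion. Without an explicit disk of this kind, your second half remains an unproved assertion.
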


This result sheds some light on the following well-known proposed strategy to disprove the smooth $4$-dimensional Poincar\'e conjecture: find a knot $K$ that is H-slice (or equivalently slice) in a homotopy $4$-sphere but not in $S^4$; see for example \cite{mnm}. Corollary~\ref{cor:exotica} gives the first example showing that indeed there are closed $4$-manifolds for which the set of H-slice knots can detect exotic smooth structures. (We note that the literature already contains examples of exotic $4$-manifold pairs, where the boundary $Y$ is \emph{not} $S^3$, such that some knot in $Y$ bounds a null-homologous smooth disk in one manifold and not in the other; this is the case, for instance, with Akbulut's corks \cite{AkbulutCork}).

We also observe that Corollary~\ref{cor:exotica} gives an example of a knot that is topologically but not smoothly H-slice in an indefinite $4$-manifold; under the homeomorphism $X \to X'$, the image of the smooth H-slice disk for the right hand trefoil in $X$ is a topological H-slice disk for the right-handed trefoil in $X'$. Since the right-handed trefoil does not bound a topological disk in $S^3\times [0,1]$, Corollary~\ref{cor:exotica} demonstrates that this disparity between smooth and topological sliceness is inherent to $X'$ rather than inherited from the well-known disparity between smooth and topological sliceness in $B^4$. 

In a different direction, Furuta's celebrated $10/8$-theorem \cite{Furuta} gives constraints on the intersection forms of smooth spin $4$-manifolds. Donald and Vafaee \cite{DonaldVafaee} used the $10/8$-theorem to derive a new sliceness obstruction (in the four-ball). Their result was strengthened by Truong in \cite{Truong}, by applying a refinement of Furuta's theorem (called the $10/8+4$ theorem) due to Hopkins-Lin-Shi-Xu \cite{HLSX}.

The same techniques can be used to obstruct H-sliceness in other $4$-manifolds. We obtain:

\begin{theorem}
\label{thm:DV}
Let $K \subset S^3$ be an $H$-slice knot in a closed spin 4-manifold $X$, and let $W$ be a spin 2-handlebody with $\de W = S^3_0(K)$. If $b_2(X) + b_2(W) \neq 1, 3, 23$, then
\[
b_2(X) + b_2(W) \geq \frac{10}{8} \cdot |\sigma(X) - \sigma(W)| + 5.
\]
\end{theorem}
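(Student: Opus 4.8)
The plan is to reduce the statement to the $10/8+4$ theorem of Hopkins--Lin--Shi--Xu by building a closed spin $4$-manifold out of the $H$-slice disk and the handlebody $W$, in the spirit of Donald--Vafaee \cite{DonaldVafaee} and Truong \cite{Truong}. First I would use the hypothesis that $K$ is $H$-slice in $X$: let $D \subset \Xo$ be a null-homologous disk with $\del D = K$. Since $[D]=0$, the disk has trivial normal bundle, so I can fix a tubular neighborhood $\nu(D) \cong D^2 \times D^2$ and set $V := \Xo \setminus \operatorname{int}\nu(D)$. A computation near $\del \Xo = S^3$ shows that the disk induces the Seifert (i.e.\ $0$-) framing on $K$, so that $\del V = S^3_0(K)$. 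Because $[D]$ is null-homologous, the connecting homomorphism $H_2(\Xo) \to H_1(\nu(D)\cap V) \cong \Z$ in the Mayer--Vietoris sequence for $\Xo = V \cup \nu(D)$ is intersection with $[D]$, hence zero; this yields $H_2(V) \cong H_2(\Xo) = H_2(X)$ compatibly with the intersection forms, and $H_1(V)\cong\Z$ generated by the meridian $\mu_K$ of $D$. In particular $b_2(V) = b_2(X)$ and $\sigma(V) = \sigma(X)$, and $V$ is spin since $V \subset \Xo \subset X$.

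Next I would form the closed manifold $M := V \cup_{S^3_0(K)} \overline W$, where $\overline W$ denotes $W$ with reversed orientation. Novikov additivity gives $\sigma(M) = \sigma(V) - \sigma(W) = \sigma(X) - \sigma(W)$, so $|\sigma(M)| = |\sigma(X)-\sigma(W)|$. A Mayer--Vietoris computation for $M = V \cup \overline W$ supplies the key homology count: the class of the capped Seifert surface generates the $1$-dimensional radical of the intersection form on $H_2(W)$ and so is nonzero there, while $\mu_K$ maps isomorphically onto $H_1(V)$; exactness then gives $b_2(M) = b_2(V) + b_2(W) - 1 = b_2(X) + b_2(W) - 1$. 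This $-1$ is exactly what will force $b_2(X)+b_2(W)$ to be odd, matching the odd exceptional values in the statement.

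Finally I would apply the $10/8+4$ theorem \cite{HLSX} to the closed spin manifold $M$: for closed spin $M$ with $b_2(M)\notin\{0,2,22\}$ it gives $b_2(M) \geq \tfrac{10}{8}|\sigma(M)| + 4$. (A closed spin $4$-manifold with $b_2>0$ has indefinite form by Donaldson, so the theorem applies, and the $\sigma(M)=0$ case holds automatically since then $b_2(M)$ is even and $\geq 4$ unless $b_2(M)\in\{0,2\}$.) Substituting the two computations above yields $b_2(X)+b_2(W)-1 \geq \tfrac{10}{8}|\sigma(X)-\sigma(W)| + 4$, which rearranges to the desired $b_2(X)+b_2(W) \geq \tfrac{10}{8}|\sigma(X)-\sigma(W)| + 5$. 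The exceptional set $b_2(M)\in\{0,2,22\}$ (realized by a point, $S^2\times S^2$, and the $K3$ surface) translates under the shift by $1$ exactly into $b_2(X)+b_2(W)\in\{1,3,23\}$.

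The hard part will be checking that $M$ is genuinely spin. Both $V$ and $W$ induce a spin structure on $S^3_0(K)$, which carries two of them, and one must verify these agree. I would argue that since the $0$-framing is even, the spin structure on $S^3_0(K)$ that bounds the handle filling $W$ is the one induced by $W$, and that this coincides with the spin structure extending over $V$ coming from the spin structure on $X$ (unique when $X$ is simply connected); when $H^1(V;\Z/2)\to H^1(S^3_0(K);\Z/2)$ is an isomorphism one can in any case match them. The second, more routine, obstacle is pinning down the $-1$ correction in $b_2(M)$, as it is responsible for the precise exceptional values and for the final constant being $+5$ rather than $+4$.
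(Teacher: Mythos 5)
Your construction is the same as the paper's: remove a neighborhood of the H-slice disk together with the deleted ball (this is the $0$-trace $X_0(K)$), set $V = X\setminus\operatorname{int}X_0(K)$, glue $V$, reversed, to $W$ along $S^3_0(K)$, compute $b_2$ and $\sigma$ via Mayer--Vietoris and Novikov additivity, and apply the $10/8+4$ theorem; your Betti-number and signature bookkeeping, and the translation of the exceptional values $\{0,2,22\}$ into $\{1,3,23\}$, agree with the paper. The genuine gap is exactly at the step you defer: making $Z=(-V)\cup W$ spin. Your primary suggestion does not work. Saying that ``the spin structure on $S^3_0(K)$ that bounds the handle filling $W$ is the one induced by $W$'' is a tautology: it does not identify which of the two spin structures on $S^3_0(K)$ the filling $W$ induces, and nothing in the hypotheses pins this down. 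On the other side, the restriction to $V$ of the spin structure of $X$ induces on $S^3_0(K)$ the spin structure that extends over $X_0(K)\subset X$, and there is no reason this should agree with the one induced by $W$. Moreover, $X$ is not assumed simply connected, so the uniqueness you invoke is unavailable. The correct resolution --- and the crux of the paper's proof --- is not to match the two induced structures but to prove that \emph{both} spin structures on $S^3_0(K)$ extend over $V$: since $V$ is spin and spin structures form a torsor over $H^1(\,\cdot\,;\F_2)$, this is equivalent to surjectivity of the restriction map $H^1(V;\F_2)\to H^1(S^3_0(K);\F_2)$, after which one simply re-chooses the spin structure on $V$ to agree with whatever $W$ induces. (Surjectivity, not the isomorphism you posit, is what is needed and what holds.)

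Your fallback gestures at this, but your supporting computation is both too strong and, as stated, insufficient. The claim $H_1(V)\cong\Z\la\mu_K\ra$ is false in general: without simple connectivity, your Mayer--Vietoris argument only gives an exact sequence $0\to\Z\mu_K\to H_1(V)\to H_1(X)\to 0$ over $\Z$ (the connecting map vanishes because it is intersection with $[D]=0$). And integral exactness alone does not give what you need, namely $\mu_K\neq 0$ in $H_1(V;\F_2)$: if $H_1(X)$ has $2$-torsion, the extension could a priori look like $0\to\Z\xrightarrow{\times 2}\Z\to\Z/2\to 0$, in which case $\mu_K$ dies mod $2$. The repair is small but essential, and is exactly what the paper does: run the connecting-map argument with $\F_2$ coefficients. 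For a closed surface $F$ transverse to $D$, the connecting map sends $[F]$ to $([F]\cdot[D])\,\mu_K \pmod 2$, which vanishes since $[D]=0$; hence $\mu_K\neq 0$ in $H_1(V;\F_2)$, and dualizing over the field $\F_2$ gives the required surjectivity. Note that this mod $2$ step is precisely where H-sliceness (rather than mere sliceness) enters the argument, so it cannot be treated as routine. With this repair, your proof coincides with the paper's.
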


In \cite[Section 3.2]{DonaldVafaee}, Donald and Vafaee applied their methods to show that a certain topologically slice knot, which we call $\KDV$ and reproduce in Figure \ref{fig:DV}, is not smoothly slice. Theorem~\ref{thm:DV} implies the following.

\begin{corollary}
\label{cor:DV}
The topologically slice knot $\KDV$ is not $H$-slice in the K3 surface.
\end{corollary}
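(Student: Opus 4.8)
The plan is to argue by contradiction and reduce the statement to a single application of Theorem~\ref{thm:DV}, using as input the spin $2$-handlebody that Donald--Vafaee and Truong already built to obstruct sliceness of $\KDV$ in $B^4$. So suppose $\KDV$ were $H$-slice in the $\KT$ surface. The $\KT$ surface is closed and spin, with $b_2(\KT) = 22$ and $\sigma(\KT) = -16$, so it is a legitimate choice of $X$ in Theorem~\ref{thm:DV}.

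Next I would recall from \cite{DonaldVafaee, Truong} the $4$-manifold $W$: a spin $2$-handlebody with $\de W = S^3_0(\KDV)$, whose intersection form I expect to read off from the Kirby diagram associated to Figure~\ref{fig:DV} as $E_8 \oplus H$, so that $b_2(W) = 10$ and $\sigma(W) = 8$. Two features of these data are what make the argument run. First, $b_2(\KT) + b_2(W) = 32$, which avoids the excluded set $\{1,3,23\}$, so Theorem~\ref{thm:DV} does apply. Second, and crucially, with $\de W = S^3_0(\KDV)$ the signature $\sigma(W) = 8$ has the sign opposite to $\sigma(\KT) = -16$, so the two reinforce in the relevant quantity: $|\sigma(\KT) - \sigma(W)| = |{-16}-8| = 24$.

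With these in hand the conclusion is immediate. If $\KDV$ were $H$-slice in $\KT$, Theorem~\ref{thm:DV} would force
\[
32 = b_2(\KT) + b_2(W) \ \geq\ \tfrac{10}{8}\,\bigl|\sigma(\KT)-\sigma(W)\bigr| + 5 = \tfrac{10}{8}\cdot 24 + 5 = 35,
\]
which is false. This contradiction shows $\KDV$ is not $H$-slice in $\KT$. As a consistency check, the same $W$ fed into the $X = S^4$ case of Theorem~\ref{thm:DV} should reproduce the Donald--Vafaee/Truong non-sliceness statement, since there $10 = b_2(W) < \tfrac{10}{8}\cdot 8 + 5 = 15$.

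I expect the only real obstacle to be bookkeeping about orientations: I must confirm that Donald--Vafaee's $W$, oriented so that $\de W = S^3_0(\KDV)$ rather than $S^3_0(\overline{\KDV})$, genuinely carries the positive-signature form, because the wrong orientation would replace $24$ by $|{-16}+8| = 8$ and destroy the bound. I must also double-check that $b_2(W)$ is small enough --- concretely $b_2(W) < 13$, so that $b_2(\KT)+b_2(W)$ stays below $\tfrac{10}{8}\cdot 24 + 5 = 35$. The value $b_2(W) = 10$ coming from $E_8 \oplus H$ leaves room, but a larger $2$-handlebody presentation would weaken or break the obstruction and would have to be traded for a more efficient one; verifying the precise intersection form of $W$ is therefore the step that warrants care.
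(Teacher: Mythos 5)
Your overall strategy is exactly the paper's: assume $\KDV$ is H-slice in $\KT$, feed $X=\KT$ together with the Donald--Vafaee spin $2$-handlebody $W$ with $\de W = S^3_0(\KDV)$ into Theorem~\ref{thm:DV}, check the excluded values, and derive a numerical contradiction. However, there is a genuine gap in your input data: the $W$ you use, with intersection form $E_8 \oplus H$, so $b_2(W)=10$ and $\sigma(W)=8$, is not the manifold Donald and Vafaee construct, and you give no construction of it --- you only say you ``expect to read it off'' from the braid diagram. What \cite[Example 3.4]{DonaldVafaee} actually provides (and what the paper cites) is a spin $2$-handlebody $W$ with $b_2(W)=21$ and $\sigma(W)=16$. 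No spin $2$-handlebody with form $E_8\oplus H$ bounding $S^3_0(\KDV)$ appears in the literature, and your argument stands or falls on its existence; as you yourself flagged, this is precisely the step that needed care, and as written it is an unverified hypothesis rather than a proof. (A consistency check against the paper's Remark after Corollary~\ref{cor:DV}: the closed-up manifold there has form $4(-E_8)\oplus 5\left(\begin{smallmatrix}0&1\\1&0\end{smallmatrix}\right)$, which is what you get from $b_2(W)=21$, $\sigma(W)=16$, not from your smaller $W$.)

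Fortunately, the gap is repaired simply by substituting the correct invariants, and then your computation becomes the paper's proof verbatim: $b_2(\KT)+b_2(W)=22+21=43 \notin \{1,3,23\}$, your orientation worry resolves exactly as you hoped ($\sigma(W)=+16$ has sign opposite to $\sigma(\KT)=-16$, so $|\sigma(\KT)-\sigma(W)|=32$), and Theorem~\ref{thm:DV} would force
\[
43 \;\geq\; \tfrac{10}{8}\cdot 32 + 5 \;=\; 45,
\]
which is false. Note the margin is tighter than in your version ($43$ versus $45$, rather than $32$ versus $35$): your requirement ``$b_2(W)<13$'' was an artifact of your guessed $\sigma(W)=8$; with $\sigma(W)=16$ the actual requirement is $b_2(W)<23$, which $21$ satisfies. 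So the corollary is true and your reduction to Theorem~\ref{thm:DV} is the right one, but a complete proof must quote (or reconstruct) the actual handlebody from \cite[Example 3.4]{DonaldVafaee} rather than a hypothetical smaller one.
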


Note that this example is qualitatively different from that of the trefoil in $X'=K3 \# \bCP$, because $\KDV$ bounds a locally flat disk in a neighborhood of the boundary, i.e., in $S^3 \times [0,1]$. In the terminology of \cite{KlugRuppik}, $\KDV$ is topologically shallow slice in the K3 surface, whereas the trefoil is topologically deep slice in $X'$.

\begin{remark}
In unpublished work, Anthony Conway and Oliver Singh used the same technique to investigate topological versus smooth H-sliceness in $\#^n (S^2\times S^2)$. 
\end{remark}

\subsection{Conventions} All manifolds will be assumed to be compact and oriented. We also assume the manifolds and the embeddings of surfaces to be smooth unless otherwise stated.

\subsection{Organization of the paper} In Section~\ref{sec:slice} we discuss the notions of slice and H-slice knots in four-manifolds, and give some examples. In Section~\ref{sec:top} we review several topological constraints on the existence of surfaces with boundary inside four-manifolds. In Section~\ref{sec:adj} we prove the relative adjunction inequality, Theorem~\ref{thm:adjunction-nu+}, along with Theorem~\ref{thm:symp} and Corollary~\ref{cor:exotica}. In Section~\ref{sec:DVT} we present the relative Donald-Vafaee obstruction, Theorem~\ref{thm:DV}, and prove Corollary~\ref{cor:DV}. Finally, in Section~\ref{sec:open} we list a few open problems.

\subsection{Acknowledgments} We are grateful to Matt Hedden, Michael Klug, Maggie Miller, Benjamin Ruppik, and Ian Zemke for helpful conversations. We also thank Gordana Mati{\'c}, Anubhav Mukherjee, Kouichi Yasui, and the referee for comments on a previous version of the paper.

\section{Slice and H-slice knots}
\label{sec:slice}
 Let $X$ be a closed, connected, smooth four-manifold, and consider properly embedded surfaces in 
$\Xo := X\setminus \intB.$

\begin{definition}
(a) We say that a knot $K$ in $S^3 \cong \del \Xo$ is {\em slice in $X$} if it bounds a smoothly, properly embedded disk $\Delta  \subset \Xo$. 

(b) If $K$ is slice in $X$ and the disk $\Delta$ can be taken so that $[\Delta]=0 \in H_2(\Xo, \del \Xo) \cong H_2(X),$ we say that $K$ is {\em $H$-slice in $X$.} 
\end{definition}
As a simple observation, we note that a knot $K$ is slice (resp.\ H-slice) in $X$ if and only if its mirror image $\mK$ is slice (resp.\ H-slice) in $\mX$.

We denote by $\Slice(X)$, resp.\ $\Slice_H(X)$, the set of knots that are slice, resp.\ H-slice, in $X$. In particular, we write $\Slice= \Slice(S^4)=\Slice_H(S^4)$ for the usual set of slice knots. We also write $\Knots$ for the set of all knots. Note that for every $X$ we have
\begin{equation}
\label{eq:slice}
\Slice \subseteq \Slice_H(X) \subseteq \Slice(X) \subseteq \Knots.
\end{equation}

In a topological $4$-manifold $X$, we also have related notions of {\em topologically slice}, and {\em topologically H-slice} knots in $X$, referring to the existence of disks that are embedded in a locally flat way. 

Let us mention a few results about sliceness and H-sliceness in some particular four-manifolds:
Norman \cite{Norman} and Suzuki \cite{Suzuki} proved that every knot is slice in  $\CP \# \bCP$ and $S^2 \times S^2$. Further, Cochran-Orr-Teichner [\cite{COT} Remark 1.3] showed that every knot of Arf invariant zero is H-slice in $\#^n(S^2 \times S^2)$ for some $n \geq 0$. At the opposite end, for manifolds such as $X= S^1 \times S^3$ or $T^4$, we have $\Slice(X) = \Slice_H(X)=\Slice$.  This can be seen by viewing $X$ as $B^4$ with some boundary identifications, then passing to some sufficiently large finite (self) cover $\tilde{X}$ (which we can still view as $B^4$ with some boundary conditions) in which the slice disk does not intersect the boundary of $B^4$. 

A more interesting example is $\CP$, for which all the inclusions in \eqref{eq:slice} are strict.
Indeed, there exist knots that are not slice in $\CP$, such as $T_{2,-15}$, cf.\ Yasuhara \cite{YasuharaExample, yasuhara}; there are also knots that are slice in $\CP$ but not H-slice in $\CP$, such as the left-handed trefoil; and knots that are H-slice in $\CP$ but not slice in $S^4$, such as the right-handed trefoil. (See Examples~\ref{ex:t1}--\ref{ex:t3} below.)  Sliceness and related notions in $\#^n \CP$ (or its reverse) were further investigated in \cite{CochranHarveyHorn}, \cite{Positive}, \cite{Pichelmeyer}. 

Invariants from Floer homology and Khovanov homology can be used to obstruct H-sliceness in definite four-manifolds \cite{os-tau, km, MMSW, HR}.

\begin{example}
\label{ex:t1}
The left hand trefoil (LHT) is not H-slice in $\CP$, for example by the adjunction inequality for $\tau$ or $s$~\cite{os4, MMSW}.
\end{example}

\begin{example}
\label{ex:RHT}
On the other hand, the right-handed trefoil (RHT) is H-slice in $\CP$. One way to see this is to consider the standard handle diagram for $\CP$. After we remove the 0-handle and the 4-handle, we get a cobordism $\CP\setminus(\intB \sqcup \intB)$ from $S^3$ to $S^3$. Observe that there is a null-homologous annulus in $\CP\setminus(\intB \sqcup \intB)$ from LHT in $\partial^-(\CP\setminus(\intB \sqcup \intB))$ to the curve $\gamma$ in $\partial^+(\CP\setminus (\intB \sqcup \intB))$ shown in the left frame of Figure \ref{fig:RHTinCP}. The annulus is null-homologous because $\gamma$ has vanishing linking number with the 2-handle. Now observe that when we identify $\partial^+(\CP\setminus(\intB \sqcup \intB))$ with the standard diagram of $S^3$, as in the right of Figure \ref{fig:RHTinCP}, we can identify $\gamma$ as the unknot. Since the unknot bounds a disk in the 4-handle, we have found a nullhomologous disk in $\CP\setminus \intB$ with boundary LHT in $\partial^-(\CP\setminus \intB)$. Adjusting for the standard outward-normal-first orientation on boundaries, the claim follows.
\begin{figure}
\begingroup%
  \makeatletter%
  \providecommand\color[2][]{%
    \errmessage{(Inkscape) Color is used for the text in Inkscape, but the package 'color.sty' is not loaded}%
    \renewcommand\color[2][]{}%
  }%
  \providecommand\transparent[1]{%
    \errmessage{(Inkscape) Transparency is used (non-zero) for the text in Inkscape, but the package 'transparent.sty' is not loaded}%
    \renewcommand\transparent[1]{}%
  }%
  \providecommand\rotatebox[2]{#2}%
  \newcommand*\fsize{\dimexpr\f@size pt\relax}%
  \newcommand*\lineheight[1]{\fontsize{\fsize}{#1\fsize}\selectfont}%
  \ifx\svgwidth\undefined%
    \setlength{\unitlength}{286.26117136bp}%
    \ifx\svgscale\undefined%
      \relax%
    \else%
      \setlength{\unitlength}{\unitlength * \real{\svgscale}}%
    \fi%
  \else%
    \setlength{\unitlength}{\svgwidth}%
  \fi%
  \global\let\svgwidth\undefined%
  \global\let\svgscale\undefined%
  \makeatother%
  \begin{picture}(1,0.3411942)%
    \lineheight{1}%
    \setlength\tabcolsep{0pt}%
    \put(0,0){\includegraphics[width=\unitlength,page=1]{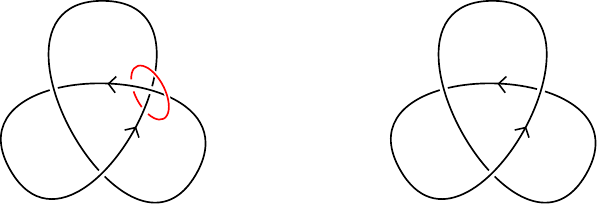}}%
    \put(0.28336075,0.20852625){\color[rgb]{0,0,0}\makebox(0,0)[lt]{\lineheight{0}\smash{\begin{tabular}[t]{l}\color{red}$+1$\end{tabular}}}}%
    \put(0.46815716,0.14214267){\color[rgb]{0,0,0}\makebox(0,0)[lt]{\lineheight{0}\smash{\begin{tabular}[t]{l}\Large$\overset{\partial}{=}$\end{tabular}}}}%
    \put(0.33956473,0.04897279){\color[rgb]{0,0,0}\makebox(0,0)[lt]{\lineheight{0}\smash{\begin{tabular}[t]{l}$\gamma$\end{tabular}}}}%
  \end{picture}%
\endgroup%

\caption{There is a cobordism in $\CP\setminus(\intB \sqcup \intB)$ from LHT to the unknot.}
\label{fig:RHTinCP}
\end{figure}
\end{example}

\begin{example}
\label{ex:t3}
Note that the red $+1$ framed unknot in Figure \ref{fig:RHTinCP} encircles a crossing of LHT, with linking number $0$. If we had instead considered a $-1$ framed unknot encircling a crossing of LHT, with linking number $\pm 2$, then we could argue exactly as in Example \ref{ex:RHT} that there is a cobordism in $\bCP\setminus(\intB \sqcup \intB)$ from LHT to the unknot, hence proving that RHT bounds a disk in $\bCP$. However in this setting the disk is in  homology class $\pm 2H$, where $H$ is a generator of $H_2(\bCP; \Z)$ (the $\pm$ ambiguity is depends on choices of strand orientation and generators of $H_2$, and is inconsequential for this discussion).  After a global change of orientation, we see that LHT bounds a disk in $\CP$ with homology class $\pm 2H$, where $H$ is a generator of $H_2(\CP; \Z)$.
\end{example}

We now give some examples of slice and H-slice knots in the K3 surface. We chose the K3 surface  because it is a symplectic 4-manifold with a rather simple description, but it is not definite, nor is it homeomorphic to a sum of $\CP$, $\bCP$, or $S^2 \times S^2$. The K3 surface can be given a handle decomposition with a 0-handle, twenty-two 2-handles, and a 4-handle, as explained in \cite[Section 8.3]{GS}. See Figure \ref{fig:K3} for such a handle diagram.

\begin{figure}
\begingroup%
  \makeatletter%
  \providecommand\color[2][]{%
    \errmessage{(Inkscape) Color is used for the text in Inkscape, but the package 'color.sty' is not loaded}%
    \renewcommand\color[2][]{}%
  }%
  \providecommand\transparent[1]{%
    \errmessage{(Inkscape) Transparency is used (non-zero) for the text in Inkscape, but the package 'transparent.sty' is not loaded}%
    \renewcommand\transparent[1]{}%
  }%
  \providecommand\rotatebox[2]{#2}%
  \newcommand*\fsize{\dimexpr\f@size pt\relax}%
  \newcommand*\lineheight[1]{\fontsize{\fsize}{#1\fsize}\selectfont}%
  \ifx\svgwidth\undefined%
    \setlength{\unitlength}{192.7499995bp}%
    \ifx\svgscale\undefined%
      \relax%
    \else%
      \setlength{\unitlength}{\unitlength * \real{\svgscale}}%
    \fi%
  \else%
    \setlength{\unitlength}{\svgwidth}%
  \fi%
  \global\let\svgwidth\undefined%
  \global\let\svgscale\undefined%
  \makeatother%
  \begin{picture}(1,0.80943443)%
    \lineheight{1}%
    \setlength\tabcolsep{0pt}%
    \put(0,0){\includegraphics[width=\unitlength,page=1]{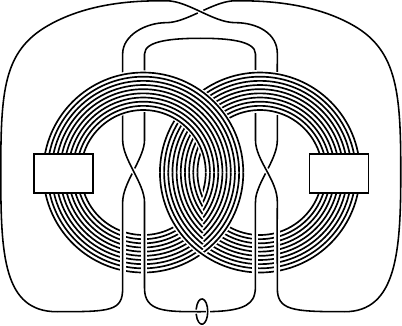}}%
    \put(0.90847269,0.64730961){\color[rgb]{0,0,0}\makebox(0,0)[lt]{\lineheight{0}\smash{\begin{tabular}[t]{l}$0$\end{tabular}}}}%
    \put(0.122732,0.35731197){\color[rgb]{0,0,0}\makebox(0,0)[lt]{\lineheight{0}\smash{\begin{tabular}[t]{l}$-1$\end{tabular}}}}%
    \put(0.80856394,0.35731197){\color[rgb]{0,0,0}\makebox(0,0)[lt]{\lineheight{0}\smash{\begin{tabular}[t]{l}$-1$\end{tabular}}}}%
  \end{picture}%
\endgroup%

\caption{The handle diagram of the K3 surface from \cite[Section 8.3]{GS}. All circles except the $0$-framed right-handed trefoil have framing $-2$.}
\label{fig:K3}
\end{figure}

\begin{example}
\label{ex:LHTinK3}
The left-handed trefoil LHT bounds a disk $\Delta \subset \KT \setminus \intB$ with $[\Delta]^2=0$ and $[\Delta] \neq 0$. To see this, as in Example \ref{ex:RHT}, we locate RHT in $\partial^-$ of the standard handle diagram of $\KT$ with the 0-handle removed. The core of the trefoil-shaped 2-handle in Figure \ref{fig:K3} is a disk in $\KT$ with boundary this RHT. Thus after correcting for outward normal first orientation we see that LHT bounds the desired disk in K3.  There are many ways to see that this disk is nullhomologous, for example after capping it with a Seifert surface one obtains a torus in $K3$ which has non-zero algebraic intersection with a $-2$ framed sphere embedded in $K3$. 
\end{example}




The next lemma implies that the family $\Slice(\KT)$ of knots that are slice in $\KT$ is quite large.
We start by giving a definition.

\begin{definition}
A knot $K_2$ is said to be obtained from a knot $K_1$ by adding a negative twist along $k$ strands if $K_1$ and $K_2$ admit knot diagrams that agree everywhere except in a small region where they appear as shown in Figure \ref{fig:NFT}.
\end{definition}

\begin{figure}
\begingroup%
  \makeatletter%
  \providecommand\color[2][]{%
    \errmessage{(Inkscape) Color is used for the text in Inkscape, but the package 'color.sty' is not loaded}%
    \renewcommand\color[2][]{}%
  }%
  \providecommand\transparent[1]{%
    \errmessage{(Inkscape) Transparency is used (non-zero) for the text in Inkscape, but the package 'transparent.sty' is not loaded}%
    \renewcommand\transparent[1]{}%
  }%
  \providecommand\rotatebox[2]{#2}%
  \newcommand*\fsize{\dimexpr\f@size pt\relax}%
  \newcommand*\lineheight[1]{\fontsize{\fsize}{#1\fsize}\selectfont}%
  \ifx\svgwidth\undefined%
    \setlength{\unitlength}{138.98219682bp}%
    \ifx\svgscale\undefined%
      \relax%
    \else%
      \setlength{\unitlength}{\unitlength * \real{\svgscale}}%
    \fi%
  \else%
    \setlength{\unitlength}{\svgwidth}%
  \fi%
  \global\let\svgwidth\undefined%
  \global\let\svgscale\undefined%
  \makeatother%
  \begin{picture}(1,0.88195071)%
    \lineheight{1}%
    \setlength\tabcolsep{0pt}%
    \put(0,0){\includegraphics[width=\unitlength,page=1]{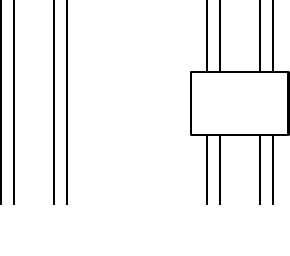}}%
    \put(0.08073113,0.51776234){\color[rgb]{0,0,0}\makebox(0,0)[lt]{\lineheight{1.25}\smash{\begin{tabular}[t]{l}$...$\end{tabular}}}}%
    \put(0.08736321,0.02103424){\color[rgb]{0,0,0}\makebox(0,0)[lt]{\lineheight{1.25}\smash{\begin{tabular}[t]{l}$k$\end{tabular}}}}%
    \put(0.77724886,0.50158611){\color[rgb]{0,0,0}\makebox(0,0)[lt]{\lineheight{1.25}\smash{\begin{tabular}[t]{l}$-1$\end{tabular}}}}%
    \put(0.7930524,0.74441028){\color[rgb]{0,0,0}\makebox(0,0)[lt]{\lineheight{1.25}\smash{\begin{tabular}[t]{l}$...$\end{tabular}}}}%
    \put(0.7930524,0.29111417){\color[rgb]{0,0,0}\makebox(0,0)[lt]{\lineheight{1.25}\smash{\begin{tabular}[t]{l}$...$\end{tabular}}}}%
    \put(0.37213528,0.50280294){\color[rgb]{0,0,0}\makebox(0,0)[lt]{\lineheight{1.25}\smash{\begin{tabular}[t]{l}$\longrightarrow$\end{tabular}}}}%
    \put(0.18482106,0.07569018){\color[rgb]{0,0,0}\rotatebox{90}{\makebox(0,0)[lt]{\lineheight{1.25}\smash{\begin{tabular}[t]{l}\resizebox{19pt}{!}{$\{$}\end{tabular}}}}}%
  \end{picture}%
\endgroup%

\caption{The figure shows the effect of adding a negative full twist along $k$ strands. The orientations on the strands are arbitrary.}
\label{fig:NFT}
\end{figure}

\begin{lemma}
\label{lem:untwistingK3}
Let $K_0$, $K_1$, and $K_2$ be knots such that $K_{i}$ is obtained from $K_{i+1}$ by adding a negative full twist along $k_i$ strands, and suppose that $k_i \leq 5$. Then, there exists a smooth, connected, properly embedded surface $\Sigma \subset \KT\setminus\intB$ with $\de\Sigma = K_2$ and
\[
g(\Sigma) = g_4(K_0).
\]
\end{lemma}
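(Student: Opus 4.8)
The plan is to assemble $\Sigma$ out of three pieces. Since $K_0$ bounds a smoothly embedded surface $F_0\subset B^4$ with $g(F_0)=g_4(K_0)$, and the $0$-handle of $\KT$ is a copy of $B^4$, it suffices to connect $K_2$ to $K_0$ by a genus-zero cobordism embedded in $\KT\setminus\intB$ and then cap off $K_0$ with $F_0$. Concretely, for each elementary move I would produce an embedded annulus $A_i\subset\KT\setminus\intB$ cobounding $K_{i+1}$ and the knot $K_i$ obtained from it by a single negative full twist along $k_i$ strands. Gluing $A_1$, $A_0$, and $F_0$ end to end along product regions $S^3\times I$ then yields a connected, properly embedded surface with $\de\Sigma=K_2$ and $g(\Sigma)=g(F_0)=g_4(K_0)$, since the annuli contribute no genus. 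Thus everything reduces to realizing a single negative full twist on at most five strands as an embedded, genus-zero annulus inside $\KT\setminus\intB$.

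For one negative full twist, recall from Example~\ref{ex:LHTinK3} and Figure~\ref{fig:K3} that the right-handed trefoil bounds the core disk $D$ of the framing-$0$ trefoil $2$-handle of $\KT$, with $[D]^2=0$, and that the remaining $-2$-framed handles provide an $E_8\oplus E_8$ plumbing of embedded $(-2)$-spheres. A negative full twist along the strands of Figure~\ref{fig:NFT} is produced by blowing down a $+1$-framed unknot $c$ encircling them, and the twist is recorded by the trace annulus $K_{i+1}\times I$ carried across this blow-down. The difficulty is that such a blow-down calls for a positive-square $(+1)$-sphere, which $\KT$ does not contain; the way around this is that $+1$-surgery on the right-handed trefoil is the Poincar\'e homology sphere, which bounds precisely the $E_8$-plumbing realized by the $-2$-framed handles of $\KT$. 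I would therefore model the twist region on the trefoil handle together with an adjacent $E_8$ configuration, arranging the annulus to remain embedded and of genus zero while its homology class is balanced against the $(-2)$-spheres. Fitting the $k$-strand twist into this fixed $(2,3,5)$/$E_8$ configuration is exactly the numerical constraint, and I expect it to cut off at $k\le 5$.

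Finally, I would handle the bookkeeping: that the two twist regions can be made disjoint (using the large $b_2$ of $\KT$ and the freedom to push the supporting $-2$-spheres apart), that the assembled surface is smoothly and properly embedded, and that, after the outward-normal-first orientation correction used in Examples~\ref{ex:RHT} and~\ref{ex:LHTinK3}, its boundary is exactly $K_2$ and the twists carry the correct (negative) sign matching the right-handed trefoil in $\KT$. The main obstacle is squarely the middle step: giving an explicit, embedded, genus-zero Kirby-calculus model of a $k$-strand negative twist inside the $\KT$ handlebody via handle slides over the trefoil and the $-2$-framed handles, and extracting from that model the sharp threshold $k\le 5$. Once this local model is in hand, the concatenation with $F_0$, the genus count, and the orientation check are all routine.
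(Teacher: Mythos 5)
Your outer skeleton is exactly the paper's: build one annulus in $\KT\setminus(\intB\sqcup\intB)$ per elementary twist, concatenate, and cap $K_0$ with a genus-$g_4(K_0)$ surface in a $B^4$ glued to the bottom of the cobordism, so that $g(\Sigma)=g_4(K_0)$. The gap is entirely in the middle step, and the mechanism you propose for it does not work. You want to realize the negative full twist by blowing down a $+1$-framed unknot encircling the $k$ strands, and, acknowledging that $\KT$ has no such handle, to compensate via the Poincar\'e sphere and an $E_8$ configuration of $(-2)$-spheres. This is a non-starter: the intersection form of $\KT$ is even, so there is no class of square $+1$ at all, hence no blow-down to perform; and ``balancing the homology class against the $(-2)$-spheres'' is never turned into an actual construction of an embedded annulus. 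Your own phrase ``I expect it to cut off at $k\le 5$'' concedes that the single point where the hypothesis $k_i\le 5$ must enter is left unproved, so the proposal as written is not a proof.

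The construction that works is more elementary and uses only handle slides, as in Example~\ref{ex:RHT}: if the attaching link of a handle diagram (with no $1$-handles) contains a $T_{k,-k}$ torus link as a sublink --- with \emph{no} condition on the framings of those components --- then sliding each of the $k$ parallel strands of a knot $K\subset\de^+W$ over one of the $k$ components produces precisely a negative full twist on those strands, and the trace of the slides is an embedded annulus in the cobordism $W$ (this is Figure~\ref{fig:slideK3} in the paper). The framings are irrelevant because each handle is slid over by a single strand, so the framing only introduces kinks on that strand, while the pairwise linking numbers $-1$ of $T_{k,-k}$ create the mutual crossings of the full twist. The threshold $k\le 5$ then has nothing to do with $(2,3,5)$ or $E_8$ numerics: it is simply the observation that the standard diagram of $\KT$ (Figure~\ref{fig:K3}) contains two \emph{disjoint} copies of $T_{5,-5}$ as sublinks --- the five outermost components of the left bundle and the five innermost components of the right bundle --- and using one copy for each of the two twists also settles the disjointness issue you flagged at the end.
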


\begin{proof}
Let $X$ be a closed four-manifold with a handle diagram with no 1-handles, and let $W = X \setminus (\intB \sqcup \intB)$, seen as a cobordism from $\del^- W=S^3$ to $\del^+ W=S^3$. We think of $W$ as removing a 0-handle and a 4-handle from $X$.

We first observe that if any $T_{k,-k}$ torus link appears as a sublink of such a handle diagram (where we make no assumption on the framings of the components), and the knot $J$ is obtained from $K$ by adding a negative twist along $k$ strands, then there is an embedded annulus in $W$ from $J \subset \de^- W$ to $K \subset \de^+ W$. Such an annulus is shown in Figure \ref{fig:slideK3}.

Now observe that the handle diagram of $\KT$ in Figure \ref{fig:K3} contains two split copies of $T_{5,-5}$, namely the 5 outermost components of the left bundle and the 5 innermost components of the right bundle. Thus, there is an embedded annulus $A$ in $W = \KT \setminus (\intB \sqcup \intB)$ from $K_0$ to $K_2$. To get the surface $\Sigma$, we attach a minimal genus surface for $K_0$ in a $B^4$ glued to $\de^- W$. 
\end{proof}

\begin{figure}
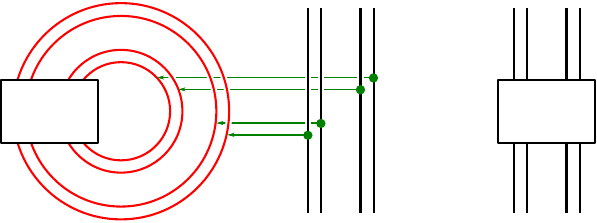
\caption{For a 4-manifold $W$ as in the proof of Lemma \ref{lem:untwistingK3}, the red curves on the left represent some 2-handles of $W$ arranged as a $T_{k,-k}$ torus link. If we slide $k$ parallel strands of a knot $K\subset \de^+ W$ over the 2-handles as shown in the figure, the resulting knot $J\subset \de^- W$ is obtained from $K$ by adding a negative full twist along $k$ strands.}
\label{fig:slideK3}
\end{figure}

\begin{corollary}\label{cor:unknotting}
Any knot $K$ with unknotting number $u(K) \leq 2$ is slice in $\KT$.
\end{corollary}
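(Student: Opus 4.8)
The plan is to realize $K$ as the top of a short tower of negative full twists ending at the unknot, and then to invoke Lemma~\ref{lem:untwistingK3}. Concretely, I would produce knots $K_0, K_1, K_2$ with $K_2 = K$, with $K_i$ obtained from $K_{i+1}$ by a negative full twist along $k_i \le 5$ strands, and with $K_0$ the unknot. Granting this, Lemma~\ref{lem:untwistingK3} yields a smooth, connected, properly embedded surface $\Sigma \subset \KT \setminus \intB$ with $\de\Sigma = K$ and $g(\Sigma) \le g_4(K_0) = 0$. A connected genus-$0$ surface with a single boundary component is a disk, so $K$ is slice in $\KT$, as desired. (When $u(K) < 2$ one pads the tower with a trivial twist along $k=1$ strand, which is a Reidemeister~I kink and hence does not change the knot type; when $u(K)=0$ the knot is already slice.)

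The heart of the matter is to translate the hypothesis $u(K) \le 2$ into such a tower. Since $u(K) \le 2$, there is a diagram of $K$ together with at most two crossings whose change yields the unknot, so it suffices to realize a single crossing change as one negative full twist along at most five strands and to apply this at most twice. Recall, as in Examples~\ref{ex:RHT} and~\ref{ex:t3}, that a crossing change is effected by $\pm 1$-surgery on a small unknot $U$ encircling the two strands at the crossing, which inserts a full twist along those two strands; the sign of the inserted twist is dictated by the crossing, equivalently by the relative orientation of the two strands and by which strand lies over. Whenever the required twist is negative --- for instance when the two strands are antiparallel, so that $U$ has linking number $0$ with $K$, exactly as in Example~\ref{ex:RHT} --- this is literally a negative full twist along $k=2$ strands, and Lemma~\ref{lem:untwistingK3} applies with the corresponding $k_i = 2$.

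The main obstacle is the case in which a crossing change would \emph{a priori} require a \emph{positive} full twist (as happens, say, for the negative crossings in a standard diagram of $T_{2,-5}$), since Lemma~\ref{lem:untwistingK3} furnishes only negative twists. Here I would exploit the slack in the bound $k_i \le 5$: rather than twisting only the two strands at the crossing, one reroutes a bounded number of additional arcs of $K$ through the twisting region (equivalently, one rotates the local crossing ball and absorbs the resulting curls), thereby presenting the \emph{same} crossing change as a single \emph{negative} full twist along at most five strands, without increasing the number of twists used. Verifying that five strands always suffice, while tracking orientations carefully enough to be sure the inserted twist is genuinely negative, is the delicate step; once it is in place, stacking the (at most two) resulting negative twists, supported on the two disjoint copies of $T_{5,-5}$ in the K3 diagram identified in the proof of Lemma~\ref{lem:untwistingK3}, completes the argument.
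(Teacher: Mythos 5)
Your overall strategy (build a tower $U=K_0$, $K_1$, $K_2=K$ of negative full twists and feed it into Lemma~\ref{lem:untwistingK3}, using $g_4(K_0)=0$ so the resulting surface is a disk) is exactly the paper's, and your padding remark for $u(K)<2$ is fine. But the heart of the matter --- realizing an \emph{arbitrary} crossing change as a \emph{negative} full twist --- is precisely what you leave unproven: you flag it yourself as ``the delicate step,'' and the workaround you sketch (rerouting extra arcs so as to twist along up to five strands, ``rotating the crossing ball and absorbing curls'') is both unsubstantiated and, as it turns out, unnecessary. As written, your argument only covers crossing changes that happen to need a negative twist, and gives no actual proof for the rest; that is a genuine gap.

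What you are missing is that the sign of the twist needed to change a given crossing is \emph{not} dictated by the crossing: it depends on where you choose to place the encircling unknot. At any crossing, the two strands can be encircled either in a position where they run parallel (the circle then has linking number $\pm 2$ with $K$) or in a position where they run antiparallel (linking number $0$); both positions are always available, whatever the crossing sign. A negative full twist in the parallel position turns a positive crossing into a negative one (in braid notation, $\sigma_1\cdot\sigma_1^{-2}=\sigma_1^{-1}$), while a negative full twist in the antiparallel position turns a negative crossing into a positive one --- the latter is exactly what Example~\ref{ex:RHT} does to a (negative) crossing of the left-handed trefoil via the $+1$-framed, linking-number-$0$ unknot. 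Hence \emph{every} crossing change is a negative full twist along just $k=2$ strands, and Lemma~\ref{lem:untwistingK3} applies at once; this one-line observation is the entire content of the paper's proof. Note in particular that your supposedly problematic case, the negative crossings of $T_{2,-5}$, is exactly the case handled by the antiparallel placement of Example~\ref{ex:RHT}, which shows the sign bookkeeping in your second paragraph is backwards: no five-strand twist region is ever needed.
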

\begin{proof}
Suppose that the knots $K_0 = U$ (the unknot), $K_1$, and $K_2=K$ are such that $K_{i}$ is obtained from $K_{i+1}$ by a crossing change.
Note that any crossing change can be realized by adding a negative full twist along 2 strands. By applying Lemma \ref{lem:untwistingK3}, we obtain a slice disk $\Sigma$ for $K$.
\end{proof}

\begin{example}
From Corollary \ref{cor:unknotting} we see that both trefoils are slice in K3. (See also Example \ref{ex:LHTinK3}.) However, note that neither trefoil is H-slice in K3; see Example~\ref{ex:torus} below.
\end{example}

The following lemma illustrates a way to construct H-slice knots in a general 4-manifold.

\begin{lemma}
\label{lem:Wh}
Let $X$ be a 4-manifold and let $K$ be a knot in $S^3$ which bounds a surface $\Sigma$ of genus $g$  in $\Xo$.
Then $\Wh^{\pm}_{-[\Sigma]^2}(K)$ bounds a homologically trivial embedded surface $\Wh^{\pm}_{-[\Sigma]^2}(\Sigma)$ of genus $2g$ in $\Xo$.
\end{lemma}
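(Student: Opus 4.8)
The plan is to construct $\Wh^{\pm}_{-[\Sigma]^2}(\Sigma)$ explicitly as a ``doubled'' surface, built from two antiparallel copies of $\Sigma$ joined by a single clasped band near the boundary. First I would take a parallel pushoff $\Sigma'$ of $\Sigma$ using a nonvanishing section of its normal bundle (which exists because $\Sigma$ has nonempty boundary, so the normal $D^2$-bundle is trivial), chosen so that $\Sigma$ and $\Sigma'$ are disjoint. The key framing observation is that the boundary $K' = \de\Sigma'$ is then a parallel copy of $K$ in $S^3$ with $\lk(K,K') = [\Sigma]^2$: since the copies are disjoint, the entire relative normal Euler number $[\Sigma]^2$ is recorded by the framing of the boundary pushoff rather than by interior intersection points.

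Next I would reverse the orientation of $\Sigma'$, so that in $H_2(\Xo,\de\Xo)$ we have $[\Sigma] + [-\Sigma'] = [\Sigma] - [\Sigma] = 0$; this is precisely what forces homological triviality. The boundary of $\Sigma \sqcup (-\Sigma')$ is then the $2$-cable $K \cup (-K')$, now with $\lk(K,-K') = -[\Sigma]^2$. Working in a collar $S^3 \times [0,1] \subset \Xo$ of the boundary, I would attach a single band implementing a positive or negative clasp, fusing the two antiparallel strands $K$ and $-K'$ into a single knot. By definition of the twisted Whitehead double, the framing $-[\Sigma]^2$ of the antiparallel cable together with the sign of the clasp produces exactly $\Wh^{\pm}_{-[\Sigma]^2}(K)$. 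Capping this band cobordism off with $\Sigma \cup (-\Sigma')$ gives a connected, embedded surface in $\Xo$ with the desired boundary, which I would take as the definition of $\Wh^{\pm}_{-[\Sigma]^2}(\Sigma)$.

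The remaining verifications are routine. Homological triviality is immediate from the orientation reversal, since the connecting band lies in the collar and contributes nothing to $H_2$. For the genus, an Euler characteristic count suffices: $\Sigma \sqcup (-\Sigma')$ has $\chi = 2(1-2g)$ and two boundary circles, and attaching one band lowers $\chi$ by $1$ while fusing the two circles into one, yielding a connected surface with $\chi = 1 - 4g$ and a single boundary component, hence genus $2g$. The clasp affects only the embedding of the band, not its abstract topology, so it contributes no additional genus. I expect the only genuine subtlety --- and the step I would check most carefully --- to be the framing identification: namely that the disjoint parallel pushoff has boundary linking exactly $[\Sigma]^2$, so that the antiparallel cable carries framing $-[\Sigma]^2$, matching the twisting parameter in the statement. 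Once the sign conventions for the clasp and for the normal Euler number are pinned down, both the $\Wh^{+}$ and $\Wh^{-}$ cases follow simply by choosing the clasp sign accordingly.
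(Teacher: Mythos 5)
Your construction is essentially the one in the paper: the paper likewise takes two oppositely oriented parallel copies of $\Sigma$ and joins them by a clasped band, and your Euler characteristic count and your homological-triviality argument match its proof. The one structural difference is that the paper starts with copies whose boundary is the \emph{untwisted} cable $C_{2,0}(K)$, so that the copies intersect in $\Sigma_1\cdot\Sigma_2=-[\Sigma]^2$ interior points, and then pushes those intersection points to the boundary, where they become linking; you instead take a disjoint pushoff from the start via a nowhere-zero normal section, so that all of $[\Sigma]^2$ is absorbed into the boundary framing at once. These two bookkeeping schemes are equivalent, and yours is arguably a cleaner way to organize the same construction.

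However, the step you flagged as the one to check most carefully is wrong as stated. With the standard outward-normal-first orientation on $\del\Xo$ (the convention this paper uses), the disjoint, same-oriented pushoff $\Sigma'$ has boundary framing $\lk(K,K')=-[\Sigma]^2$, not $+[\Sigma]^2$. The general relation is: interior intersection number $=$ homological pairing $+$ boundary linking; with zero interior intersections and pairing $[\Sigma]\cdot[\Sigma']=[\Sigma]^2$, the linking must be $-[\Sigma]^2$. A model case makes this concrete: for $\Sigma=\CP^1$ with a small disk removed, properly embedded in $\CP\setminus\intB$, a nowhere-zero section of the normal bundle $\mathcal{O}(1)$ exists over $\Sigma$, but its boundary pushoff is the $(-1)$-framed pushoff of the unknot, because the $+1$ of self-intersection is carried by the zero of the section hidden in the deleted ball. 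Your final answer nevertheless agrees with the lemma only because of a second, compensating slip: you take the Whitehead twisting parameter to be the \emph{antiparallel} linking number of the doubled strands, whereas the standard convention --- and the paper's, as shown by its identification of the boundary cable as $C_{2,-2[\Sigma]^2}(K)$ yielding $\Wh^{\pm}_{-[\Sigma]^2}(K)$ --- is that the parameter is the \emph{parallel} linking number, i.e., the framing of the clasped annulus. With both signs corrected, your pushoff has parallel linking $-[\Sigma]^2$, the two boundary strands form $C_{2,-2[\Sigma]^2}(K)$, and the clasped band produces $\Wh^{\pm}_{-[\Sigma]^2}(K)$ as desired; so the construction is sound, but the framing computation, which is the only substantive point of the lemma, needs to be redone.
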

\begin{proof}
Take two copies $\Sigma_1$ and $\Sigma_2$ of $\Sigma$ with opposite orientations, with boundary the untwisted cable $C_{2,0}(K)$, where the two components have opposite orientations. Note that $\Sigma_1 \cdot \Sigma_2 = -[\Sigma]^2$.
We can remove these intersections by introducing twists into the cable; thus, we get a 2-component embedded surface with boundary $C_{2, -2[\Sigma]^2}(K)$, i.e.~so that the linking number of the two components is equal to $[\Sigma]^2$. By adding a twisted band to connect the two components we get a null-homologous embedded surface $\Wh^\pm_{-[\Sigma]^2}(\Sigma)$ with boundary $\Wh^\pm_{-[\Sigma]^2}(K)$.
\end{proof}

\begin{example}
By Lemma \ref{lem:Wh} applied to Example \ref{ex:LHTinK3}, we see that $\Wh^{\pm}_0($LHT) is H-slice in K3.
\end{example}

\section{Topological obstructions}
\label{sec:top}
We review here some constraints on the homology classes of surfaces embedded in topological four-manifolds.

\subsection{The Arf invariant}
The Kirby-Siebenmann invariant $\ks(X) \in H^4(X; \Z/2)$ is an obstruction to smoothing topological manifolds; see \cite[p. 252]{KS}. When $X$ is a closed oriented topological $4$-manifold, then $\ks(X)$ is valued in $\Z/2$ and given by the formula 
\begin{equation}
\label{eq:ks}
\ks(X) = \frac{1}{8}(\sigma(X) - [\Sigma]^2) - \Arf(X, \Sigma) \pmod {2},
\end{equation}
where $\Sigma$ is any characteristic surface in $X$; see \cite[Corollary 9.3]{KirbyTaylor}. (Here, the Arf invariant of $(X, \Sigma)$ is a quadratic enhancement of the intersection form on $H_1(\Sigma; \Z/2)$; see  \cite[p.120-121]{Matsumoto}.) 

In particular, when $X$ is smooth, we have $\ks(X)=0$ and we obtain:
\begin{equation}
\label{eq:Arf} \frac{\sigma(X) - [\Sigma]^2}{8} \equiv \Arf(X, \Sigma) \pmod{2}.
\end{equation}
This result is due to Rokhlin \cite{RokhlinGudkov}; see \cite{FK}, \cite{Matsumoto} for different proofs. 

There is also a relative version of \eqref{eq:ks}, as follows.

\begin{theorem}
\label{thm:FK}
Let $X$ be a  closed, connected, topological $4$-manifold. If $\Sigma \subset \Xo$ is a properly embedded, locally flat characteristic surface with boundary a knot $K$, then
\begin{equation}
\label{eq:ArfSigma}
  \frac{\sigma(X) - [\Sigma]^2}{8} \equiv \Arf(K) + \Arf(X, \Sigma) + \ks(X) \pmod{2}.
  \end{equation}
  \end{theorem}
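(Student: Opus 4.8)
The plan is to reduce the relative statement to the closed congruence \eqref{eq:Arf} by capping off $\Sigma$ with a pushed-in Seifert surface. Recall that gluing $B^4$ back onto $\Xo$ to recover $X$ reverses the orientation of the boundary $S^3$, so that the knot $K \subset \partial\Xo$ is seen as $\mK \subset \partial B^4$. Accordingly, I would choose a Seifert surface for $\mK$ in $S^3 = \partial B^4$ and push its interior into $B^4$ to obtain a smooth, properly embedded surface $F \subset B^4$ with $\partial F = \mK$, which is automatically null-homologous and carries the Seifert (i.e.\ $0$-) framing on its boundary. Gluing produces a closed, locally flat surface $\hat\Sigma := \Sigma \cup_{S^3} F \subset X$.

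Two bookkeeping facts come first. Since $H_2(B^4)=0$, the class $[\hat\Sigma] \in H_2(X)$ is the image of $[\Sigma]$ under the isomorphism $H_2(\Xo,\partial\Xo)\cong H_2(X)$; hence $\hat\Sigma$ is characteristic in $X$ because $\Sigma$ is, and moreover $[\hat\Sigma]^2 = [\Sigma]^2$, as the relative self-intersection is computed with the $0$-framing, which is exactly the framing with which the pushed-in Seifert surface caps off, and $F$ itself contributes no self-intersection. With these identifications, I would apply the closed Rokhlin-type congruence \eqref{eq:Arf} to $(X,\hat\Sigma)$ to obtain
\[
\frac{\sigma(X) - [\Sigma]^2}{8} \equiv \Arf(X,\hat\Sigma) \pmod 2.
\]

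The heart of the argument is then to show that the Arf invariant is additive under this capping, namely $\Arf(X,\hat\Sigma) = \Arf(X,\Sigma) + \Arf(\mK)$. This rests on analyzing the Guillou--Marin quadratic refinement $q_{\hat\Sigma}$ of the mod-$2$ intersection form on $H_1(\hat\Sigma;\Z/2)$. Splitting $\hat\Sigma$ along the separating circle $K$ gives an orthogonal decomposition $H_1(\hat\Sigma;\Z/2) \cong H_1(\Sigma;\Z/2) \oplus H_1(F;\Z/2)$: classes supported on $\Sigma$ and on $F$ can be made disjoint away from the gluing circle, so their mod-$2$ intersection number vanishes, and by the defining relation $q(a+b)=q(a)+q(b)+a\cdot b$ the form is the orthogonal sum of its two restrictions. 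The restriction to $H_1(\Sigma;\Z/2)$ is by definition the relative form computing $\Arf(X,\Sigma)$, while the restriction to $H_1(F;\Z/2)$ is the quadratic form associated to a pushed-in Seifert surface in $B^4$, whose Arf invariant is the classical knot invariant $\Arf(\mK)$. Additivity of the Arf invariant over orthogonal direct sums then gives the claim, and finally $\Arf(\mK)=\Arf(K)$ since the Arf invariant of a knot is unchanged under mirroring.

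The main obstacle I anticipate is precisely the quadratic-form bookkeeping in the last paragraph: one must fix a definition of the relative invariant $\Arf(X,\Sigma)$ compatible with the Guillou--Marin form, check that the normal-framing and self-intersection conventions glue consistently across the separating circle so that no correction term is introduced along $K$, and verify that the restriction of $q_{\hat\Sigma}$ to the Seifert-surface summand genuinely reproduces the mod-$2$ Seifert form computing $\Arf(\mK)$. The geometric content---that the two pieces meet only along $K$ and hence decouple---is intuitively clear, but making the framing corrections precise (and confirming that the finer $\Z/4$-valued Guillou--Marin data reduces to the stated $\Z/2$-valued additive statement) is where the real work lies; this is essentially the relative refinement of the Freedman--Kirby argument \cite{FK}.
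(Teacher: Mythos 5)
The paper does not actually prove Theorem~\ref{thm:FK}: it cites Corollary~6 on p.~69 of Kirby's book and Theorem~2.2 of Yasuhara, so there is no in-paper argument to compare against. Your proof --- cap $\Sigma$ with a pushed-in Seifert surface for $\mK$ in the $B^4$, apply the closed congruence \eqref{eq:Arf} to the resulting locally flat characteristic surface (noting $[\hat\Sigma]^2=[\Sigma]^2$ via the Seifert framing), and split the quadratic form orthogonally along the separating circle so that the Seifert-surface summand contributes $\Arf(\mK)=\Arf(K)$ --- is exactly the standard argument in those references, and it is correct; the only substantive ingredient, which you correctly flag, is the classical lemma that the Robertello/Guillou--Marin form restricted to a pushed-in Seifert surface is the mod-$2$ Seifert self-linking form, whose Arf invariant is $\Arf(K)$.
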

In the smooth case, this appears (with minor modifications) as Corollary 6 on \cite[p.~69]{KirbyBook}, and also as Theorem 2.2 in \cite{Yasuhara3}. The topological case can be deduced from the smooth one, by gluing a Seifert surface for $K$ to $\Sigma$ and applying Equation~\eqref{eq:ks} to the resulting closed surface in $X = \Xo \cup B^4$. Theorem~\ref{thm:FK} is also a particular case of a result of Klug  \cite[Theorem 4]{Klug}, who studied the more general setting of $4$-manifolds with boundary a homology sphere.

When $X$ is smooth, spin, and $\Sigma$ is a null-homologous disk, we have $\sigma(X) \equiv 0\!\! \pmod{16}$ by Rokhlin's theorem. We also have $\Arf(X, \Sigma)=0$, because $H_1(\Sigma; \Z/2)=0$. From \eqref{eq:ArfSigma}, we recover an old result of Robertello~\cite[p.~1-2]{robertello}:
 \begin{theorem}[Robertello \cite{robertello}] 
\label{thm:Arf}
If a knot $K$ is topologically H-slice in a spin smooth $4$-manifold, then $\Arf(K)=0$.
\end{theorem}

\begin{example}
\label{ex:torus}
Recall that the Arf invariant of a knot $K$ can be read from the determinant $D(K) = |\Delta_K(-1)|$: we have $\Arf(K)=0 \iff D(K) \equiv \pm 1 \ \ \pmod{8}.$ For example, the torus knot $T_{2, 2k+1}$ has $D(T_{2, 2k+1}) = 2k+1$ and therefore $\Arf(T_{2, 2k+1})=0 \iff k \equiv 0$ or $3 \pmod{4}$. Thus, torus knots of the form $T_{2, 8k+3}$ and $T_{2,8k+5}$ are not H-slice in smooth spin $4$-manifolds.
\end{example}

\subsection{Levine-Tristram signatures} Given a knot $K \subset S^3$ and a value $\omega \in S^1$, the Levine-Tristram signature $\sigma_K(\omega)$ is defined as the signature of $(1-\omega)A + (1 -\overline{\omega}) A^T$, where $A$ is a Seifert matrix for $K$; see \cite{Tristram}, \cite{Levine}, or \cite{ConwaySurvey}.

Following \cite{NP}, we denote by $\Se$ the set of unit complex numbers that are not zeros of any integral Laurent polynomial $p$ with $p(1)=1$. Note that $\Se$ includes, for example, roots of unity of order a prime power. The evaluations $\sigma_K(\omega)$ for $\omega \in \Se$ are knot concordance invariants; see \cite{NP}. In particular, for $\omega=-1$, we obtain the usual knot signature $$\sigma(K) :=\sigma_K(-1).$$

The following result is a special case of Theorem 3.8 in \cite{ConwayNagel}. 

\begin{theorem}[Conway-Nagel \cite{ConwayNagel}]
\label{thm:CN}
Let $X$ be a closed topological $4$-manifold with $H_1(X; \Z)=0$ and signature $\sigma(X)$. If the knot $K \subset S^3$ bounds a locally flat, properly embedded, null-homologous surface in $\Xo$ of genus $g$, then 
\begin{equation}
\label{eq:CN}
 |\sigma_K(\omega) + \sigma(X)|  \leq b_2(X) + 2g,
 \end{equation}
for every $\omega \in \Se$. 
\end{theorem}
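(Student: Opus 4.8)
The plan is to prove the inequality by means of a twisted ($\omega$-equivariant) signature associated to the surface complement, in the spirit of the Atiyah--Singer $G$-signature theorem and Casson--Gordon invariants. First I would set $W := \Xo$ and let $W_\Sigma := W \setminus \nu(\Sigma)$ be the exterior of a tubular neighborhood of the surface. Since $H_1(X;\Z)=0$, we have $H_2(\Xo,\del\Xo)\cong H_2(X)$, so ``null-homologous'' is unambiguous, and the meridian $\mu$ of $\Sigma$ generates a $\Z$-summand of $H_1(W_\Sigma;\Z)$; this yields a well-defined homomorphism $\phi\colon H_1(W_\Sigma;\Z)\to\Z$ with $\mu\mapsto 1$. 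For $\omega\in\Se\subset S^1$ I would form the rank-one unitary local system $\C_\omega$ on $W_\Sigma$ determined by $\gamma\mapsto\omega^{\phi(\gamma)}$, and consider the Hermitian twisted intersection form on $H_2(W_\Sigma;\C_\omega)$; write $\sigma_\omega(W_\Sigma)$ for its signature. The whole argument reduces to two statements: an \emph{identification} of $\sigma_\omega(W_\Sigma)$ with $\sigma(X)+\sigma_K(\omega)$, and a \emph{bound} on it by $b_2(X)+2g$.

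For the identification I would argue by additivity of signatures under gluing. The boundary of $W_\Sigma$ decomposes along a torus into the knot exterior $S^3\setminus\nu(K)$ and the trivial circle bundle $\Sigma\times S^1$; the local system restricts to the standard meridional character on the knot exterior and has vanishing twisted homology on $\Sigma\times S^1$ (since $\omega\neq 1$ kills the $S^1$-factor). Capping off $S^3=\del W$ recovers $X$, whose untwisted signature is $\sigma(X)$. The difference $\sigma_\omega(W_\Sigma)-\sigma(X)$ is then exactly the contribution of the $\omega$-character on the knot exterior, which by the Casson--Gordon/$G$-signature computation for $S^3\setminus\nu(K)$ equals the Levine--Tristram signature $\sigma_K(\omega)$, up to a sign fixed by orientation conventions (harmless, since we take absolute values). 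Here $\omega\in\Se$ guarantees $\Delta_K(\omega)\neq 0$---because $\pm\Delta_K$ is an integral polynomial with value $1$ at $1$---so that $H_1(S^3\setminus\nu(K);\C_\omega)=0$, the boundary contribution is clean, and $\sigma_K(\omega)$ has no jump ambiguity.

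For the bound I would use $|\sigma_\omega(W_\Sigma)|\le\dim_\C H_2(W_\Sigma;\C_\omega)$ and control the right-hand side by an Euler characteristic computation. Because the local system is nontrivial and $W_\Sigma$ is a connected $4$-manifold with nonempty boundary, $H_0(W_\Sigma;\C_\omega)=H_4(W_\Sigma;\C_\omega)=0$, while Poincar\'e--Lefschetz duality pairs the degree-$1$ and degree-$3$ twisted homology. The twisted Euler characteristic equals the ordinary one (the system has rank $1$), and a Mayer--Vietoris decomposition along $\Sigma\times S^1$ gives $\chi(W_\Sigma)=\chi(\Xo)-\chi(\Sigma)=(1+b_2(X))-(1-2g)=b_2(X)+2g$. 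Assembling the pieces would then yield $|\sigma_K(\omega)+\sigma(X)|=|\sigma_\omega(W_\Sigma)|\le b_2(X)+2g$.

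The main obstacle is making this last bound precise: one must verify that the radical of the degenerate twisted intersection form, together with the duality-paired classes in degrees $1$ and $3$, accounts exactly for the gap between $\dim_\C H_2(W_\Sigma;\C_\omega)$ and $\chi(W_\Sigma)$, so that the signature is genuinely bounded by $b_2(X)+2g$ rather than by a larger Betti number. This is precisely where the hypothesis $\omega\in\Se$ does its work, and it is the step requiring the careful homological bookkeeping of Conway--Nagel. Since the stated theorem is a special case of their Theorem 3.8, an alternative route is simply to specialize their result to an ambient manifold with $H_1(X)=0$ and to a single knot bounding a null-homologous surface; the argument above is the geometric content of that specialization.
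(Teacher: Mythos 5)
Since the paper does not actually prove Theorem~\ref{thm:CN} --- it is stated as a quotation, being a special case of Theorem~3.8 of \cite{ConwayNagel} --- the only meaningful comparison is with Conway--Nagel's argument, and your sketch does follow its broad strategy: twisted signatures of the surface exterior $W_\Sigma = \Xo\setminus\nu(\Sigma)$, an identification of $\sigma_\omega(W_\Sigma)$ with $\sigma(X)+\sigma_K(\omega)$, and a twisted Betti-number bound. However, both halves of your sketch have genuine gaps. For the identification: the boundary of $W_\Sigma$ is $E_K\cup_{T^2}(\Sigma\times S^1)$, where $E_K=S^3\setminus\nu(K)$, so ``capping off $S^3=\del W$'' is not an available move, and ``the contribution of the $\omega$-character on the knot exterior'' is not a defined quantity: the Casson--Gordon/$G$-signature machinery computes signatures of $4$-manifolds, not of the $3$-manifold $E_K$. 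The actual argument closes $\Sigma$ up with a pushed-in Seifert surface $F'\subset B^4$ and needs three separate inputs: (i) Viro's computation that the twisted intersection form of $B^4\setminus\nu(F')$ is congruent to $(1-\omega)A+(1-\overline{\omega})A^T$, so its twisted signature is $\sigma_K(\omega)$; (ii) the $G$-signature/Casson--Gordon theorem, valid in the topological category, applied to the exterior of the closed null-homologous surface $S=\Sigma\cup F'$, giving twisted signature $\sigma(X)$ because $[S]=0$ and $[S]^2=0$; and (iii) additivity of twisted signatures for the gluing along $E_K$, which is a gluing along a $3$-manifold \emph{with boundary}, so Wall non-additivity must be shown to vanish (it does, because $H_*(T^2;\C_\omega)=0$). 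The extra genus of $F'$ is harmless only because it enters the identification and not the bound.

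For the bound, the gap is more serious, because your proposed resolution points at the wrong mechanism. For $\omega\in\Se$ the twisted homology of $\del W_\Sigma$ vanishes entirely ($\Delta_K(\omega)\neq0$ kills $E_K$, and $\omega\neq1$ kills $T^2$ and $\Sigma\times S^1$), so $H_2(W_\Sigma;\C_\omega)\to H_2(W_\Sigma,\del W_\Sigma;\C_\omega)$ is an isomorphism and the twisted intersection form is \emph{nondegenerate}: there is no radical to quotient out. Consequently $|\sigma_\omega(W_\Sigma)|\le\dim_\C H_2(W_\Sigma;\C_\omega)=\chi(W_\Sigma)+2\dim_\C H_1(W_\Sigma;\C_\omega)$, and your chain of inequalities closes up only if $H_1(W_\Sigma;\C_\omega)=0$; degenerate-form bookkeeping cannot rescue the estimate if this group is nonzero. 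That vanishing is the true content of the $\Se$ hypothesis at the $4$-manifold level, and it requires an argument you have not supplied: using $[\Sigma]=0$ and $H_1(X;\Z)=0$, a Wang-sequence argument shows that $t-1$ acts surjectively, hence (by finite generation) bijectively, on the Alexander module $H_1(W_\Sigma;\Z[t^{\pm1}])$; therefore this module is torsion with order evaluating to $\pm1$ at $t=1$, and by the very definition of $\Se$ the number $\omega$ is not a root of that order, whence $H_1(W_\Sigma;\C_\omega)=0$ (and $H_3$ vanishes by duality). This is precisely the technical lemma that Conway--Nagel prove; without it, the final step $|\sigma_\omega(W_\Sigma)|\le\chi(W_\Sigma)=b_2(X)+2g$ is unjustified, even though the Euler characteristic computation itself is correct.
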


Note that we can re-write the constraint \eqref{eq:CN} as
$$\sigma_K(\omega) \in [-2b_2^+(X)-2g, 2b_2^-(X)+2g].$$

One case of interest is when $g=0$: 
\begin{corollary}[Corollary 4.7 in \cite{KlugRuppik}]
\label{cor:KR}
Let $X$ be a closed topological $4$-manifold with $H_1(X; \Z)=0$. If a knot $K$ is topologically H-slice in $X$, and $\omega \in \Se$, then $\sigma_K(\omega) \in [-2b_2^+(X), 2b_2^-(X)]$. 
\end{corollary}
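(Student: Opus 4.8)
The plan is to deduce this directly from the Conway--Nagel bound, Theorem~\ref{thm:CN}, by specializing to the genus-zero case. First I would unwind the definition: saying that $K$ is topologically H-slice in $X$ means precisely that $K$ bounds a locally flat, properly embedded, null-homologous disk $\Delta \subset \Xo$. Since $\Delta$ is a disk, its genus is $g=0$, so $\Delta$ is exactly one of the surfaces to which Theorem~\ref{thm:CN} applies. (The theorem also requires $H_1(X;\Z)=0$; this hypothesis is inherited from the ambient setting of \cite{KlugRuppik} in which the corollary is stated.)

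Next I would substitute $g=0$ into the inequality \eqref{eq:CN}, which yields
\[
|\sigma_K(\omega) + \sigma(X)| \leq b_2(X)
\]
for every $\omega \in \Se$. The remaining work is a purely arithmetic conversion of this two-sided bound into the claimed interval. Writing $\sigma(X) = b^+(X) - b^-(X)$ and $b_2(X) = b^+(X) + b^-(X)$, the inequality $-b_2(X) \leq \sigma_K(\omega) + \sigma(X) \leq b_2(X)$ rearranges to $-2b^+(X) \leq \sigma_K(\omega) \leq 2b^-(X)$, which is exactly the membership $\sigma_K(\omega) \in [-2b^+(X), 2b^-(X)]$. This is precisely the $g=0$ instance of the reformulation recorded immediately after Theorem~\ref{thm:CN}.

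There is no genuine obstacle here, as the statement is an immediate corollary of Theorem~\ref{thm:CN}. The only point requiring care is bookkeeping the sign conventions relating $\sigma(X)$, $b_2(X)$, and $b^{\pm}(X)$, so that the endpoints of the final interval are assigned to the correct sides.
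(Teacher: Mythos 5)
Your proof is correct and is exactly the argument the paper intends: specialize Theorem~\ref{thm:CN} to a null-homologous locally flat disk ($g=0$) and rewrite $|\sigma_K(\omega)+\sigma(X)|\leq b_2(X)$ via $\sigma(X)=b^+(X)-b^-(X)$ and $b_2(X)=b^+(X)+b^-(X)$, which is precisely the reformulation recorded after Theorem~\ref{thm:CN}. Your remark about the implicit hypothesis $H_1(X;\Z)=0$ is also a fair point of bookkeeping that the paper glosses over.
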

 As an application of this, Klug and Ruppik showed in \cite[Theorem 4.4]{KlugRuppik} that for every closed oriented topological $4$-manifold $X$, there exists a knot $K \subset S^3$ that is not topologically H-slice in $X$.

\subsection{Relative Rokhlin-type inequalities} 
Rokhlin \cite{Rokhlin} gave constraints on the homology classes of closed embedded surfaces inside $4$-manifolds, provided these homology classes are divisible by a prime. (Note that this includes the null-homologous case.) Rokhlin's results were used by Yasuhara in \cite{YasuharaExample, yasuhara} to prove the existence of non-slice knots in $\CP$. Rokhlin's results were also used by Klug and Ruppik in \cite{KlugRuppik} to show that every closed $2$-handlebody $X \neq S^4$ admits knots that are slice in $X$ but not slice in $S^4$.

Rokhlin's constraints were generalized to surfaces with boundary a knot $K$ in the work of Viro \cite{Viro2} and Gilmer \cite[Theorem 4.1 and Remark (a) on p.371]{Gilmer4}. Note that while Rokhlin, Viro and Gilmer all worked in the smooth category, the main ingredient in their proofs is the $G$-signature theorem \cite{AS}, which also works in the topological category when applied to tame, semi-free actions (such as, in our case, the action of deck transformations on a cyclic branched cover); see \cite[Theorem 14B.2]{Surgery}. Consequently, we will phrase the relative result in the topological category.

\begin{theorem}[Viro \cite{Viro2}, Gilmer \cite{Gilmer4}]
\label{thm:rokhlin}
Let $X$ be a topological closed oriented $4$-manifold with $H_1(X; \Z)=0$ and with signature $\sigma(X)$. Let $\Sigma \subset \Xo$ be a locally flat, properly embedded surface of genus $g$, with boundary a knot $K\subset S^3$. If the homology class $[\Sigma] \in H_2(\Xo, \del \Xo) \cong H_2(X)$ is divisible by a prime power $m=p^k$, then 
\begin{equation}
\label{eq:Gsignature}
\left |\sigma_K(e^{2\pi r i/m}) + \sigma(X) - \frac{2r(m-r)\cdot [\Sigma]^2}{m^2} \right| \leq b_2(X)+2g,
\end{equation}
for every $r =1, \dots, m-1.$
\end{theorem}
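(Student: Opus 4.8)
The natural strategy is the classical one behind every Rokhlin-type inequality: pass to the $m$-fold cyclic branched cover and apply the $G$-signature theorem to the deck transformation action. The plan is as follows. First I would cap off the knot: choose a Seifert surface $F$ for $K$, push its interior into a collar $B^4$ glued to $\partial \Xo = S^3$, and form the closed surface $\widehat\Sigma = \Sigma \cup_K F$ inside the closed manifold $X = \Xo \cup B^4$. Since capping inside the ball does not change the homology class, $[\widehat\Sigma] = [\Sigma] \in H_2(X)$ is still divisible by $m = p^k$, and $[\widehat\Sigma]^2 = [\Sigma]^2$ (using the Seifert framing of $K$). Divisibility by the prime power $m$, together with $H_1(X;\Z)=0$, is exactly what is needed to produce a surjection from $H_1$ of the complement onto $\Z/m$ sending a meridian to a generator, and hence a connected $m$-fold cyclic cover of $X \setminus \widehat\Sigma$ that extends over a tubular neighborhood to the branched cover $\tX \to X$.

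Next I would decompose $\tX = \tXo \cup_{\Sigma_m(K)} \widetilde B$ along the $m$-fold branched cover $\Sigma_m(K)$ of $(S^3,K)$, where $\widetilde B$ is the branched cover of $B^4$ along $F$ and $\tXo$ is the branched cover of $\Xo$ along $\Sigma$. The group $G = \Z/m$ acts on all three pieces, so the complex intersection forms split into $\zeta^r$-eigenspaces for $\zeta = e^{2\pi i/m}$, and each eigenspace carries a Hermitian form with a well-defined signature $\sigma_{\zeta^r}(-)$. The heart of the argument is then three inputs. (1) The closed $G$-signature theorem applied to $\tX$ computes the eigenspace signatures purely from the branch data: $\sigma_{\zeta^r}(\tX) = \sigma(X) - \tfrac{2r(m-r)}{m^2}[\Sigma]^2$ (one checks that summing over $r$ recovers the Hirzebruch formula $\sigma(\tX) = m\sigma(X) - \tfrac{m^2-1}{3m}[\Sigma]^2$). (2) Novikov additivity for these eigenspace signatures across the splitting gives $\sigma_{\zeta^r}(\tX) = \sigma_{\zeta^r}(\tXo) + \sigma_{\zeta^r}(\widetilde B)$. (3) Viro's identification of the eigenspace signature of the branched cover of $B^4$ along a Seifert surface with the Levine--Tristram signature, $\sigma_{\zeta^r}(\widetilde B) = -\sigma_K(e^{2\pi i r/m})$. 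Combining (1)--(3) shows that the quantity inside the absolute value in \eqref{eq:Gsignature} is precisely $\sigma_{\zeta^r}(\tXo)$.

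It then remains to bound $|\sigma_{\zeta^r}(\tXo)|$ by $b_2(X) + 2g$. Since a signature is bounded in absolute value by the rank of the form, it suffices to show that the $\zeta^r$-eigenspace of $H_2(\tXo;\C)$, modulo its radical, has dimension at most $b_2(X) + 2g$. I expect this dimension count to be the main obstacle: one identifies the relevant eigenspace with a twisted homology group of the complement $\Xo \setminus \Sigma$ via the usual transfer, notes that $b_2(\Xo) = b_2(X)$, and controls the extra contribution of the branch locus by $H_1(\Sigma) \cong \Z^{2g}$, so that the branch surface of genus $g$ contributes at most the $2g$ term. This is the technical content of Gilmer's Theorem~4.1, and obtaining the sharp constant (rather than an off-by-$2$ estimate) is the delicate step.

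Finally, for the topological category I would note that local flatness of $\Sigma$ supplies a topological tubular neighborhood, so the branched cover and its $G$-action exist and are \emph{tame and semi-free}; the only analytic ingredient, the $G$-signature theorem, is available for such actions by \cite[Theorem 14B.2]{Surgery}. Thus the smooth argument of Viro and Gilmer carries over essentially verbatim, which is all that is needed since no smooth structure on $\tXo$ is used beyond the signatures of its eigenspace forms.
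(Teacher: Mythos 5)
Your proposal is correct and follows exactly the route the paper relies on: the paper does not reprove this theorem but cites Viro and Gilmer, whose argument is precisely your scheme (cap off with a pushed-in Seifert surface, take the $m$-fold cyclic branched cover, apply the $G$-signature theorem and equivariant Novikov additivity, identify the ball's contribution with the Levine--Tristram signature), and the paper's only added content --- that the $G$-signature theorem applies to the tame, semi-free deck action so the smooth proof carries over to the topological category via \cite[Theorem 14B.2]{Surgery} --- is exactly your final paragraph. The one step you defer, the bound of the eigenspace signature of the covered knot complement by $b_2(X)+2g$, is likewise the technical content of Gilmer's Theorem 4.1 that the paper cites rather than reproves.
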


\begin{remark}
When $[\Sigma]$ is $2$-divisible, taking $m=2$ in Theorem~\ref{thm:rokhlin} we get:
\begin{equation}
\label{eq:signature}
\left | \sigma(K)+ \sigma(X)   - \frac{ [\Sigma]^2}{2}\right| \leq b_2(X)+2g.
\end{equation}
\end{remark}

\begin{remark}
Suppose $K$ is the unknot. After capping $K$ with a disk in $B^4$, the inequality \eqref{eq:Gsignature} for $r = \lfloor m/2 \rfloor$ gives Rokhlin's main theorem from \cite{Rokhlin}. 
\end{remark}

\begin{remark}
When $\Sigma$ is null-homologous, Theorem~\ref{thm:rokhlin} reduces to the Conway-Nagel result (Theorem~\ref{thm:CN}) for $\omega=e^{2\pi r i/m}$.
\end{remark}

\section{Adjunction inequalities}
\label{sec:adj}

\subsection{The adjunction inequality for closed surfaces in closed $4$-manifolds}

We start by reviewing the adjunction inequality from Seiberg-Witten theory, which gives genus bounds on smoothly embedded surfaces in $4$-manifolds.

Let $X$ be a closed $4$-manifold with $b_2^+(X)>1$. 
Recall that  $X$ is called {\em of Seiberg-Witten simple type} if the Seiberg-Witten invariants $SW_{X, \s}$ vanish whenever the expected dimension of the Seiberg-Witten moduli space,
$$ d(\s) = \frac{c_1(\s)^2 - (2\chi(X) +3 \sigma(X))}{4},$$
is nonzero. It is known that complex projective surfaces and, more generally, symplectic $4$-manifolds are of simple type; cf.~\cite{Taubes1}, \cite{Taubes2}.

\begin{theorem}[\cite{KM-thom}, \cite{MST}, \cite{os-thom}]
\label{thm:SW-adjunction}
Let $X$ be a closed $4$-manifold with $b_2^+(X)>1$. Let $\Sigma \subset X$ be a smoothly embedded surface of genus $g(\Sigma) > 0$. Suppose that either
\begin{enumerate}[(a)]
\item $[\Sigma]^2 \geq 0$; or
\item $X$ is of Seiberg-Witten simple type.
\end{enumerate}
Then, for each \spinc structure $\s \in \Spinc(X)$ for which $SW_{X, \s} \neq 0$, we have
\begin{equation}
\label{eq:SW-adjunction}
\scal{c_1(\s)}{[\Sigma]} + [\Sigma]^2 \leq 2g(\Sigma)-2.
\end{equation}
\end{theorem}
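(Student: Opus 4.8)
The plan is to prove the inequality by Seiberg--Witten gauge theory, treating the three regimes $[\Sigma]^2=0$, $[\Sigma]^2>0$, and (under the simple-type hypothesis) $[\Sigma]^2<0$, and reducing the latter two to the first wherever possible. I may assume $\Sigma$ is connected, applying the argument componentwise otherwise. Because $b_2^+(X)>1$, the invariant $SW_{X,\s}$ is metric-independent, so the hypothesis $SW_{X,\s}\neq 0$ guarantees that for \emph{every} metric $g$ and small generic perturbation the monopole equations $F_A^+=\sigma(\Phi)$, $D_A\Phi=0$ admit a solution $(A,\Phi)$. The single analytic input I need is the a priori bound coming from the Weitzenb\"ock formula for $D_A$: substituting $F_A^+=\sigma(\Phi)$ produces a Bochner inequality for $|\Phi|^2$ to which the maximum principle applies, yielding the pointwise estimate $|\Phi|^2\leq\max(-s,0)$ and hence $|F_A^+|\leq C\max(-s,0)$ with $C$ universal, where $s$ is the scalar curvature. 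Since $2\pi\scal{c_1(\s)}{[\Sigma]}=i\int_\Sigma F_A$, the whole game is to choose a metric that renders this curvature integral visible while forcing the self-dual curvature to be small.

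First I would settle the base case $[\Sigma]^2=0$. Here the normal bundle is trivial, so a tubular neighborhood is a product $\Sigma\times D^2$; I give $\Sigma$ a metric of constant Gauss curvature (flat if $g=1$, hyperbolic if $g\geq 2$, using $g>0$) and insert a long cylindrical neck $\Sigma\times S^1\times[0,T]$. Stretching the neck concentrates the negative scalar curvature along $\Sigma$, where Gauss--Bonnet controls $\int_\Sigma\max(-s,0)\,dA$ by $-2\pi\chi(\Sigma)=2\pi(2g-2)$. Feeding the curvature bound into $2\pi\scal{c_1(\s)}{[\Sigma]}=i\int_\Sigma F_A$ and letting $T\to\infty$ yields $\scal{c_1(\s)}{[\Sigma]}\leq 2g-2$, which is the claim since $[\Sigma]^2=0$. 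This is the Kronheimer--Mrowka argument of \cite{KM-thom}.

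Next, for $[\Sigma]^2=k>0$ I would reduce to the base case by blowing up. Choosing $k$ distinct points of $\Sigma$, the proper transform $\widetilde\Sigma\subset X\#k\bCP$ is diffeomorphic to $\Sigma$ and satisfies $[\widetilde\Sigma]=[\Sigma]-\sum_{i=1}^k E_i$, so $[\widetilde\Sigma]^2=[\Sigma]^2-k=0$. By the blow-up formula, the \spinc structure $\s'$ with $c_1(\s')=c_1(\s)+\sum_i E_i$ again has $SW_{X\#k\bCP,\s'}\neq 0$, and a direct computation gives $\scal{c_1(\s')}{[\widetilde\Sigma]}=\scal{c_1(\s)}{[\Sigma]}+k$. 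Applying the $[\Sigma]^2=0$ case to $\widetilde\Sigma$ gives $\scal{c_1(\s)}{[\Sigma]}+k\leq 2g-2$, i.e.\ the assertion, since $k=[\Sigma]^2$.

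The remaining and hardest regime is $[\Sigma]^2<0$ under the simple-type hypothesis, and this is where I expect the real obstacle. Blowing up only \emph{lowers} self-intersection, so it cannot reduce this case to the previous ones, and the naive curvature argument does not close. Instead I would follow the Ozsv\'ath--Szab\'o higher-type analysis \cite{os-thom}: the simple-type condition forces every \spinc structure carrying a nonzero invariant to satisfy $c_1(\s)^2=2\chi(X)+3\sigma(X)$, and one runs the neck-stretching argument along $\Sigma\times S^1$ while tracking the full contribution of the three-dimensional monopole solutions on $\Sigma\times S^1$, not merely its leading order. Controlling that contribution and showing it still delivers the genus bound for negative self-intersection is the technical heart of the theorem, and is exactly the step I would expect to be most delicate.
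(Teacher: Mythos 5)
A preliminary observation: the paper does not prove Theorem~\ref{thm:SW-adjunction} at all --- it is quoted as background, with the proof outsourced to \cite{KM-thom}, \cite{MST}, \cite{os-thom} --- so your attempt can only be measured against those references and against the paper's proof of its Heegaard Floer analogue, Theorem~\ref{thm:OSz-adjunction-bdy}.

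Your treatment of hypothesis (a) follows the standard route: neck-stretching with the Weitzenb\"ock estimate when $[\Sigma]^2=0$ (this is indeed the content of \cite[Lemma 9]{KM-thom}, as the paper itself notes in the proof of Theorem~\ref{thm:BFadj}), then blow-up reduction when $[\Sigma]^2>0$; your computation $\scal{c_1(\s')}{[\widetilde\Sigma]}=\scal{c_1(\s)}{[\Sigma]}+k$ is correct, and the proper transform preserves genus and the expected dimension, so the blow-up formula applies. However, the base case as you wrote it has a real gap: the maximum principle bounds only the \emph{self-dual} curvature $F_A^+$ pointwise, whereas the quantity you want, $\int_\Sigma F_A$, pairs the full curvature with the area form of the slice, whose self-dual and anti-self-dual components have equal size; a bound on $|F_A^+|$ alone therefore cannot be ``fed into'' $2\pi\scal{c_1(\s)}{[\Sigma]}=i\int_\Sigma F_A$, and the step fails as stated. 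The missing ingredient is Chern--Weil: $\norm{F_A^+}_{L^2(X)}^2-\norm{F_A^-}_{L^2(X)}^2=4\pi^2c_1(\s)^2$ is a topological constant independent of the neck length $T$, so $\norm{F_A^-}_{L^2}$ grows at most like $\sqrt{T}$, and Cauchy--Schwarz over the neck (whose volume is linear in $T$) bounds $\int_{\mathrm{neck}}|F_A^-|$ linearly in $T$ with exactly the constant needed; one then averages $\int_{\Sigma_t}F_A$ over all slices and lets $T\to\infty$. You declared the Weitzenb\"ock bound ``the single analytic input I need''; it is not.

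Hypothesis (b) is a genuine gap: you explicitly stop short of a proof, and the sketch you give is not how the argument is organized (also, for $[\Sigma]^2<0$ the relevant boundary three-manifold is the nontrivial circle bundle over $\Sigma$ of Euler number $[\Sigma]^2$, not $\Sigma\times S^1$). The structure of the proof in \cite{os-thom} --- mirrored precisely in this paper's own proof of Theorem~\ref{thm:OSz-adjunction-bdy}, case (b) --- is: assume the inequality fails for a basic class $\s$; blow up to normalize the excess so that $\scal{c_1(\s)}{[\Sigma]}+[\Sigma]^2=2g(\Sigma)$ while keeping the invariant nonzero; invoke the adjunction \emph{relation}, which asserts that $\s+\PD([\Sigma])$ is then also a basic class; and compute
$d(\s+\PD([\Sigma]))=d(\s)+\scal{c_1(\s)}{[\Sigma]}+[\Sigma]^2=d(\s)+2g(\Sigma)>d(\s)$,
contradicting simple type, which (as you correctly note) forces $d=0$ for every basic class. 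The key idea missing from your write-up is the adjunction relation itself --- that is the statement whose proof requires the gluing analysis you allude to --- and without isolating it, the simple-type hypothesis never gets any traction on the negative self-intersection case.
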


In Heegaard Floer theory, the analogues of the Seiberg-Witten invariants are the Ozsv\'ath-Szab\'o mixed invariants $\Phi_{X, \s}$ defined in \cite{os4}. The invariants $\Phi_{X, \s}$ are conjecturally equal to the Seiberg-Witten invariants, and have similar properties. We recall their definition in Section \ref{sec:adj-bdy}.

We say that $X$ is {\em of Ozsv\'ath-Szab\'o simple type} if $\Phi_{X,\s}=0$ whenever $d(\s) \neq 0$. 
 It is expected that symplectic 4-manifolds are of Ozsv\'ath-Szab\'o simple type; see ~\cite[Conjecture 1.3]{JabukaMark}. In any case, we know that the K3 surface is of Ozsv\'ath-Szab\'o simple type by the calculation in \cite[Section 4]{os-symp}; see \cite{JabukaMark} for other examples.

\begin{theorem}[Ozsv\'ath-Szab\'o \cite{os4, os-symp}]
\label{thm:OSz-adjunction}
Let $X$ be a closed $4$-manifold with $b_2^+(X)>1$. Let $\Sigma \subset X$ be a smoothly embedded surface of genus $g(\Sigma) > 0$. Suppose that either
\begin{enumerate}[(a)]
\item $[\Sigma]^2 \geq 0$; or
\item $X$ is of Ozsv\'ath-Szab\'o simple type.
\end{enumerate}
Then, for each \spinc structure $\s \in \Spinc(X)$ for which $\Phi_{X, \s} \neq 0$, we have
\begin{equation}
\label{eq:GS-adjunction}
\scal{c_1(\s)}{[\Sigma]} + [\Sigma]^2 \leq 2g(\Sigma)-2.
\end{equation}
\end{theorem}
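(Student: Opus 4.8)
The plan is to prove the contrapositive: I will show that whenever the adjunction bound fails, i.e.\ whenever $\scal{c_1(\s)}{[\Sigma]} + [\Sigma]^2 > 2g(\Sigma)-2$, the mixed invariant $\Phi_{X,\s}$ vanishes. The argument mirrors the Seiberg--Witten proofs of \cite{KM-thom,MST,os-thom}, but is carried out entirely in terms of the Ozsv\'ath--Szab\'o mixed map and its gluing properties. The three ingredients are the definition of $\Phi_{X,\s}$ recalled in Section~\ref{sec:adj-bdy}, a neck-stretching argument along the boundary of a tubular neighborhood of $\Sigma$, and the Heegaard Floer detection of the Thurston norm (the genus-bound computation of \cite{os-symp}).

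First I would treat the main case $[\Sigma]^2 = n \geq 0$. Let $\nbd(\Sigma)\subset X$ be a tubular neighborhood of $\Sigma$; it is the disk bundle of Euler number $n$ over $\Sigma$, and its boundary $Y$ is the circle bundle of Euler number $n$ over $\Sigma$. Since $\Phi_{X,\s}$ is defined by cutting $X$ along an admissible hypersurface with $b_1=0$, I would choose this cut disjoint from $\nbd(\Sigma)$, so that $\Sigma$, and hence $Y$, lies entirely inside one of the two pieces. Stretching the neck along $Y$ and applying the composition law for cobordism maps then expresses the map attached to that piece, in the spin$^c$ structure $\s$, as a sum of maps factoring through the Heegaard Floer groups $\HFp(Y,\t)$ with $\t = \s|_Y$. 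At this point I would invoke Ozsv\'ath--Szab\'o's computation of the Floer homology of circle bundles over surfaces together with their genus-bound theorem: for $g(\Sigma)>0$, the group $\HFp(Y,\t)$ vanishes once $\t$ exceeds the Thurston norm, a condition which pulls back to exactly $\scal{c_1(\s)}{[\Sigma]}+[\Sigma]^2 > 2g(\Sigma)-2$. Hence the factored map is zero, and therefore so is $\Phi_{X,\s}$.

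The hypothesis $g(\Sigma)>0$ is essential in the last step: for a sphere the relevant Floer group of the circle bundle does not vanish in the same range, which is why spheres are excluded (and why the genus-zero case is the strictly harder generalized Thom conjecture). To remove the sign assumption on $[\Sigma]^2$ under the simple-type hypothesis (b), I would run the same neck-stretching along $Y$, now with $n<0$; here the disk bundle $\nbd(\Sigma)$ contributes no positive-definite direction, so the $\HFi$-level part of the factored map must be controlled, and it is precisely the condition $d(\s)=0$ forced by simple type that kills it, in parallel with the product-formula argument of \cite{MST}. Alternatively one can reduce to the case $n\geq 0$ using the Heegaard Floer blow-up formula $\Phi_{X\#\bCP,\,\s\#\s_E}=\Phi_{X,\s}$ from \cite{os4}.

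The hard part will be the three-dimensional input: computing $\HFp(Y,\t)$ for the circle bundle $Y$ and showing that it vanishes in precisely the spin$^c$ structures violating the adjunction bound. A secondary technical point is verifying that the composition law applies cleanly after the neck-stretch, namely that the map attached to the piece containing $\Sigma$ genuinely factors through $\HFp(Y,\t)$ with the claimed grading and spin$^c$ bookkeeping, and --- in the negative-square case --- that simple type eliminates the residual $\HFi$ contribution.
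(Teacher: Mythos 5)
Your main case contains a genuine gap: the vanishing statement you defer to as ``the hard part'' is not hard but false when $[\Sigma]^2 = n > 0$. Write $Y_n = \de\nbd(\Sigma)$ for the Euler-number-$n$ circle bundle. The restriction of $c_1(\s)$ to $Y_n$ factors through $H^2(\nbd(\Sigma);\Z) \cong H^2(\Sigma;\Z) \cong \Z$, and in the exact sequence of the pair $(\nbd(\Sigma), Y_n)$ the map $H^2(\nbd(\Sigma), Y_n) \to H^2(\nbd(\Sigma))$ is multiplication by $n$ (it is the intersection form under Lefschetz duality); hence $c_1(\s)|_{Y_n}$ lands in the $\Z/n$ torsion subgroup of $H^2(Y_n;\Z)$. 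So $\t = \s|_{Y_n}$ is a \emph{torsion} \spinc structure, and $\HFp(Y_n,\t) \neq 0$ for every torsion \spinc structure, since $\HFp_k \cong \HFi_k$ in all sufficiently large degrees $k$ and $\HFi$ is nonzero in torsion \spinc structures. Equivalently: $H_2(Y_n;\Z)$ is generated by tori, the Thurston norm of $Y_n$ vanishes, and the adjunction defect $\scal{c_1(\s)}{[\Sigma]} + [\Sigma]^2 - (2g(\Sigma)-2)$ does \emph{not} pull back to $Y_n$ at all --- it dies modulo $n$. This is precisely why the Seiberg--Witten neck-stretching proofs \cite{KM-thom, MST}, which rely on analytic control of solutions on $\R \times Y_n$ rather than on the vanishing of a Floer group, have no direct Heegaard Floer transcription. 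Your case (b) sketch inherits the same problem, and in addition ``simple type kills the residual $\HFi$ contribution'' is not a step one can actually carry out.

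The repair is to blow up \emph{first}, reducing to $[\Sigma]^2 = 0$; this is what \cite[Theorem 1.5]{os4} and the paper's Theorem \ref{thm:OSz-adjunction-bdy} do. The blow-up formula $\Phi_{X \# \bCP,\, \s \# \s_E} = \Phi_{X,\s}$ lets you replace $\Sigma$ by its connected sum with the exceptional sphere while preserving both the nonvanishing of the mixed invariant and the quantity $\scal{c_1(\s)}{[\Sigma]} + [\Sigma]^2$. Once $[\Sigma]^2 = 0$ the relevant cut is $S^1 \times \Sigma$, the restricted \spinc structure is non-torsion, and the group the map factors through vanishes by \cite[Theorem 7.1]{OSz-properties} --- which applies to the connected sum $N \# (S^1 \times \Sigma)$ because that $3$-manifold still contains a copy of $\Sigma$ violating the $3$-dimensional adjunction bound; no computation of the Floer homology of nontrivial circle bundles is needed. (You also need the admissible cut disjoint from $\Sigma$; the paper produces it as the boundary of a neighborhood of a surface $T$ with $[T]^2 > 0$ and $[T]\cdot[\Sigma]=0$, tubed off $\Sigma$, using $b_2^+(X)>1$.) For case (b) the paper argues quite differently: after blowing up so that $\scal{c_1(\s)}{[\Sigma]} + [\Sigma]^2 = 2g(\Sigma)$, Zemke's relation \cite[Theorem 1.5]{ZemkeAdj} shows $\Phi_{X,\, \s + \PD([\Sigma])} \neq 0$, while $d(\s + \PD([\Sigma])) = d(\s) + 2g(\Sigma) > d(\s)$, contradicting the simple type hypothesis.
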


Under the hypothesis (a), this result is \cite[Theorem 1.5]{os4}. An alternative proof was given by Zemke \cite[Theorem 1.6]{ZemkeAdj}. Under the hypotheses (b), it is a consequence of the adjunction relation in \cite[Theorem 3.1]{os-symp}. It is also a particular case of Theorem~\ref{thm:OSz-adjunction-bdy}, which we will prove below.

In Seiberg-Witten theory, there is also an adjunction inequality for embedded spheres:
\begin{theorem}[Fintushel-Stern \cite{FSimmersed}]
\label{thm:FS}
Let $X$ be a closed $4$-manifold with $b_2^+(X)>1$. Suppose that there exists a \spinc structure $\s$ with $SW_{X, \s} \neq 0$. Then, there exist no smoothly embedded spheres $\Sigma \subset X$ such that $[\Sigma]^2 \geq 0$ and $[\Sigma]\neq 0$.
\end{theorem}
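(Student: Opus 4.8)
The plan is to argue by contradiction and to exploit the fact that a tubular neighborhood of $\Sigma$ is a disk bundle over $S^2$ whose boundary is a spherical space form, hence carries a metric of positive scalar curvature; the strategy is then a neck-stretching (splitting) vanishing argument for $SW_{X,\s}$. The first thing to note is that the adjunction inequality of Theorem~\ref{thm:SW-adjunction} cannot be applied directly, since it requires $g(\Sigma)>0$ whereas $\Sigma$ is a sphere. For the same reason one should resist the temptation to reduce to the case $[\Sigma]^2=0$ by blowing up: that reduction runs the wrong way, replacing the easier positive-square case by the harder square-zero one. I would therefore split into the two regimes $[\Sigma]^2>0$ and $[\Sigma]^2=0$ and treat them separately.

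Consider first $m:=[\Sigma]^2>0$. A tubular neighborhood $\nbd(\Sigma)$ is the disk bundle of Euler number $m$ over $S^2$, so $b_2^+(\nbd(\Sigma))=1$ and $\partial \nbd(\Sigma)$ is the lens space $L(m,1)$, which admits a metric of positive scalar curvature. Writing $X = \nbd(\Sigma) \cup_{L(m,1)} X_0$ with $X_0 = X \setminus \nbd(\Sigma)$, the splitting hypersurface is a rational homology sphere, so Mayer--Vietoris gives $b_2^+(X) = b_2^+(\nbd(\Sigma)) + b_2^+(X_0)$; since $b_2^+(X)>1$, this forces $b_2^+(X_0)\geq 1$. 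Both pieces therefore have positive $b_2^+$, and stretching a metric along the positive-scalar-curvature hypersurface $L(m,1)$ makes the Seiberg--Witten moduli space empty in the limit, by the standard vanishing theorem for Seiberg--Witten invariants along positive-scalar-curvature splittings. This yields $SW_{X,\s}=0$ for every $\s$, contradicting the hypothesis.

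The case $m=[\Sigma]^2=0$ is the crux, and I expect it to be the main obstacle. Now $\nbd(\Sigma)\cong S^2\times D^2$ has $\partial \nbd(\Sigma)\cong S^2\times S^1$, which again carries positive scalar curvature; but $b_2^+(\nbd(\Sigma))=0$, so the clean ``both sides have $b_2^+>0$'' vanishing used above no longer applies, and the reducible solutions supported on the $S^2\times D^2$ end must be controlled by hand. This is exactly where the elementary approach stalls: tubing $k$ parallel (disjoint) copies of $\Sigma$ cyclically produces an embedded torus in class $k[\Sigma]$ of square $0$, and Theorem~\ref{thm:SW-adjunction} applied to it (together with the orientation-reversed surface) only yields $\scal{c_1(\s)}{[\Sigma]}=0$, which is not yet a contradiction.

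To finish the square-zero case I would use that $\Sigma$ is homologically essential, $[\Sigma]\neq 0$: perturbing the Seiberg--Witten equations by a generic closed $2$-form $\eta$ with $\int_\Sigma \eta \neq 0$ excludes the reducibles living on the $S^2\times D^2$ side of the stretched neck, so the invariant again vanishes in the limit. Equivalently, and more cleanly, one can invoke the Fintushel--Stern generalized adjunction inequality for immersed spheres: an immersed sphere with $p\geq 0$ positive double points in a class $a$ with $a^2\geq 0$ satisfies $|\scal{c_1(\s)}{a}| + a^2 \leq 2p-2$ for every $\s$ with $SW_{X,\s}\neq 0$, a bound one obtains by resolving the double points (or blowing up at them) and appealing to the embedded adjunction of Theorem~\ref{thm:SW-adjunction}. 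For our embedded $\Sigma$ we have $p=0$, so the inequality reads $|\scal{c_1(\s)}{[\Sigma]}| + [\Sigma]^2 \leq -2$, which is impossible since the left-hand side is non-negative. This simultaneously handles both regimes and makes the square-zero case rigorous; the genuine gauge-theoretic content is precisely the exclusion of the reducibles on the $b_2^+=0$ side, which is what the immersed inequality packages.
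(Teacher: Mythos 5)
Your treatment of the positive-square case is fine: splitting $X$ along the lens space boundary of the tubular neighborhood of $\Sigma$, noting $b_2^+(\nbd(\Sigma))=1$ and hence $b_2^+(X_0)\geq 1$, and invoking the standard vanishing theorem for Seiberg--Witten invariants along a positive-scalar-curvature rational homology sphere with $b_2^+\geq 1$ on both sides is a correct and self-contained argument for that regime. The problem is the square-zero case, which you rightly identify as the crux but which your proposal does not actually close. Your ``more cleanly'' finish is circular: the Fintushel--Stern generalized adjunction inequality for immersed spheres, specialized to $p=0$, \emph{is} Theorem~\ref{thm:FS} --- it is exactly the statement being proved, from the very paper \cite{FSimmersed} being cited. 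Moreover, your proposed derivation of that inequality (``resolving the double points \dots and appealing to the embedded adjunction of Theorem~\ref{thm:SW-adjunction}'') only works for $p\geq 1$: smoothing the $p$ double points produces an embedded surface of genus $p$, and Theorem~\ref{thm:SW-adjunction} requires $g>0$, so for $p=0$ the reduction hands you back an embedded sphere and gives nothing. Thus the one case you need cannot be obtained this way. Your other finish (perturbing by a closed $2$-form $\eta$ with $\int_\Sigma \eta\neq 0$ to exclude reducibles on the $S^2\times D^2$ side of the stretched neck) is the germ of a genuine argument, but as written it asserts precisely the hard gauge-theoretic content rather than proving it: one must show that finite-energy reducibles on the cylindrical-end piece are obstructed by the perturbation (e.g.\ via the identification of $L^2$ harmonic $2$-forms with the image of compactly supported cohomology, which vanishes for $S^2\times D^2$) and that positive scalar curvature rules out everything else, before concluding that the invariant vanishes.

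For comparison: the paper does not reprove this theorem but cites \cite{FSimmersed}, and in its proof of Theorem~\ref{thm:BFadj} it records the structure of Fintushel--Stern's argument: the case $[\Sigma]^2>0$ is \emph{reduced} to $[\Sigma]^2=0$ by blowing up (each blow-up at a point of the sphere drops the square by one, keeps the class nonzero, and preserves nonvanishing of the invariant by the blow-up formula), and the square-zero case is then handled by formal properties of the invariants --- the adjunction inequality of Theorem~\ref{thm:SW-adjunction} under hypothesis (a), the blow-up formula, and the finiteness of basic classes --- with no new neck-stretching analysis. In particular, your claim that the blow-up reduction ``runs the wrong way'' rests on a misconception: the square-zero case is not attacked by the naive positive-scalar-curvature splitting (which indeed fails there, since $b_2^+(S^2\times D^2)=0$), so reducing to it loses nothing. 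As it stands, your proposal rigorously proves the theorem only in the regime $[\Sigma]^2>0$.
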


\begin{remark}
The proof of Theorem~\ref{thm:FS} involves the blow-up formula and the finiteness of basic classes. Since the same results also hold in the Heegaard Floer setting (cf.~\cite[Theorems 1.4 and 3.3]{os4}), the adjunction inequality for spheres also holds if we replace the condition $SW_{X, \s} \neq 0$ with $\Phi_{X, \s} \neq 0$.
\end{remark}

For future reference, we will need a refinement of Theorem~\ref{thm:FS}, based on the Bauer-Furuta invariants instead of the Seiberg-Witten invariants. The Bauer-Furuta invariant takes values in an (equivariant) stable homotopy group of spheres, and is defined from the Seiberg-Witten map by using finite dimensional approximation. For simplicity, we will only consider the nonequivariant Bauer-Furuta invariant, with values in the ordinary stable homotopy groups of spheres:
$$\BF_{X, \s} \in \pi_{d(\s)+1}^{\st}(S^0).$$
Intuitively, via the Pontryagin-Thom construction, this captures the framed cobordism class of the Seiberg-Witten moduli space. Unlike the Seiberg-Witten invariant, it can be nonzero even when $b_2^+(X)$ is even; see \cite{BF}, \cite{Bauer} for more details.

The following theorem was independently proved by Yasui \cite[Theorem 2.8]{Yasui} using Fr{\o}yshov's work \cite[Theorem 1.1]{Froyshov}, and by Khandhawit-Lin-Sasahira in \cite[Corollary 1.9]{KLS}.
\begin{theorem}[\cite{Yasui}, \cite{KLS}]
\label{thm:BFadj}
Let $X$ be a closed $4$-manifold with $b_2^+(X)>1$. Suppose that there exists a \spinc structure with $\BF_{X, \s} \neq 0$. Then, there exist no smoothly embedded spheres $\Sigma \subset X$ such that $[\Sigma]^2 \geq 0$ and $[\Sigma]\neq 0$.
\end{theorem}

\subsection{The adjunction inequality for closed surfaces in 4-manifolds with boundary}
\label{sec:adj-bdy}
An inequality of this form was proved by Zemke \cite[Theorem 1.6]{ZemkeAdj}, assuming the map on $\mathit{CFL}^-$ is nonzero. Here we will prove a similar inequality involving mixed invariants.

The mixed invariant for a 4-manifold $X$ with boundary $Y$ and $b_2^+(X)>1$ is defined in \cite[Section 8]{os4} as follows: We choose an admissible cut $N$ that splits $X$ as $X_1 \cup_N X_2$, with $\del X_1 = N$ and $\del X_2 = \overline{N} \sqcup Y.$ For a \spinc structure $\s \in \Spinc(X)$, we combine the minus map on $X_1$ with the plus map on $X_2$ and the identification $\HFp_{\red}(N) \cong \HFm_{\red}(N)$ in the middle, to obtain a mixed map
\[
\HFm(S^3) \to \HFp(Y, \s|_Y).
\]
By incorporating the action of $\A(X) = \Lambda^*(H_1(X;\Z)/\Tors)[U]$, we get a map
\[
\Phi_{X,\s} \colon \A(X) \otimes_{\Z[U]} \HFm(S^3) \to \HFp(Y, \s|_Y),
\]
which we call the \emph{Ozsv\'ath-Szab\'o mixed invariant}.
We remark that we consider all the Heegaard Floer modules over $\F_2[U]$, where $\F_2$ is the field with two elements (see \cite{JTZ} for a discussion of the coefficient ring).

For a cobordism $W$ between non-empty manifolds $Y_0$ and $Y_1$, together with a \spinc structure $\s$ which is torsion on $Y_0$ and $Y_1$, we define the quantity
\[
\dsh(W, \s) = \frac{c_1^2(\s) - (2\chi(W)+3\sigma(W))}4,
\]
which is additive under composition of \spinc cobordisms. 

Note that for a closed 4-manifold $X$ with a \spinc structure $\s$, we have $d(\s) = \dsh(W, \s) -1$, where $W$ is $X\setminus(\intB \sqcup \intB)$ seen as a cobordism from $S^3$ to $S^3$. Analogously, for a 4-manifold $X$ with one boundary component $Y$ and a \spinc structure $\s$ which is torsion on $Y$, we define
\[
d(\s) = \dsh(W, \s|_W) - 1 = \frac{c_1(\s)^2 - (2\chi(X) + 3\sigma(X))}4 - \frac12,
\]
where $W = X \setminus \intB$ is seen as a cobordism from $S^3$ to $Y$.

\begin{definition}
\label{def:relsimple}
Let $X$ be a smooth 4-manifold $X$ with $\de X = Y$ and $b_2^+(X)>1$.
We say that $X$ is \emph{of relative Ozsv\'ath-Szab\'o simple type} if $Y$ is a rational homology sphere and $\Phi_{X,\s}=0$ whenever $d(\s) \neq d(Y, \s|_Y)$. 
\end{definition}

Note that when $X$ is closed, then $X$ being of Ozsv\'ath-Szab\'o simple type (in the usual sense) is equivalent to $\Xo = X \setminus \intB$ being of relative Ozsv\'ath-Szab\'o simple type as in Definition~\ref{def:relsimple}.

\begin{theorem}
\label{thm:OSz-adjunction-bdy}
Let $X$ be a smooth $4$-manifold (possibly with boundary) and $b_2^+(X)>1$. Let $\Sigma \subset \Int(X)$ be a smoothly embedded closed connected surface of genus $g(\Sigma) > 0$. Suppose that either

\begin{enumerate}[(a)]
\item $[\Sigma]^2 \geq 0$; or
\item $X$ is of relative Ozsv\'ath-Szab\'o simple type.
\end{enumerate}
Then, for each \spinc structure $\s \in \Spinc(X)$ for which $\Phi_{X, \s} \neq 0$, we have
\begin{equation}
\label{eq:OSz-adjunction-bdy}
\scal{c_1(\s)}{[\Sigma]} + [\Sigma]^2 \leq 2g(\Sigma)-2.
\end{equation}
\end{theorem}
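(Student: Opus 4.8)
The plan is to run the proof of the closed-case adjunction inequality (Theorems~\ref{thm:SW-adjunction} and~\ref{thm:OSz-adjunction}, following \cite{os4,os-symp}) while keeping the boundary component $Y = \de X$ as a passive, fixed codomain. Every geometric operation used below -- blowing up, stretching a neck, and the surgery triangle behind the adjunction relation -- is supported in a ball or a tubular neighborhood in $\Int(X)$, disjoint from $Y$. Concretely, I would fix an admissible cut $N$ decomposing $X = X_1 \cup_N X_2$ (with $\de X_2 = \overline{N} \sqcup Y$), and use $\Sigma \subset \Int(X)$ to arrange that a tubular neighborhood of $\Sigma$ lies in $X_2$ and is disjoint from $N$ and from $Y$. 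The maps computing $\Phi_{X,\s}\colon \A(X)\otimes_{\Z[U]}\HFm(S^3)\to\HFp(Y,\s|_Y)$ are then altered only in the factor attached to $X_2$, while the $X_1$-factor, the middle identification $\HFpred(N)\cong\HFmred(N)$, and the endpoints are untouched.

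First I would reduce hypothesis (a) to the case $[\Sigma]^2 = 0$. If $[\Sigma]^2 = n > 0$, blow up $X$ at $n$ points of $\Sigma$ inside a ball disjoint from $N$ and $Y$, obtaining $\widetilde{X} = X \# n\,\bCP$ with boundary $Y$ and proper transform $\widetilde{\Sigma}$ of genus $g(\Sigma)$ with $[\widetilde{\Sigma}]^2 = 0$. Choosing the exceptional \spinc structures with the sign contributing $+1$ to each pairing, the Heegaard Floer blow-up formula (the analogue of \eqref{eq:blowup}, which applies here because the surgery is supported in $\Int(X)$; cf.\ \cite[Theorem 3.3]{os4}) gives $\Phi_{\widetilde{X},\widetilde{\s}} \neq 0$ with $\scal{c_1(\widetilde{\s})}{[\widetilde{\Sigma}]} = \scal{c_1(\s)}{[\Sigma]} + [\Sigma]^2$. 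The $[\Sigma]^2 = 0$ inequality applied to $\widetilde{\Sigma}$ then reads exactly as \eqref{eq:OSz-adjunction-bdy}. The simple-type hypothesis (b) with $[\Sigma]^2 \geq 0$ is likewise subsumed by case (a).

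For the core case $[\Sigma]^2 = 0$ I would stretch a neck along the boundary of a tubular neighborhood. Triviality of the normal bundle gives $\de\,\nu(\Sigma) = \Sigma \times S^1$, and inserting a long neck $\Sigma \times S^1 \times [0,T]$ forces the $X_2$-factor of the composition to pass through the Floer homology of $\Sigma \times S^1$. By the computation of \cite{os4}, $\HFp(\Sigma \times S^1, \t)$ vanishes whenever $\abs{\scal{c_1(\t)}{[\Sigma]}} > 2g(\Sigma) - 2$; hence the whole composite, and thus $\Phi_{X,\s}$, vanishes once $\scal{c_1(\s)}{[\Sigma]} > 2g(\Sigma) - 2$, which is \eqref{eq:OSz-adjunction-bdy}. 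This step is where the two cuts must be reconciled: $\Sigma \times S^1$ bounds the disk bundle $\nu(\Sigma)$, which has $b_2^+ = 0$, so it is not itself an admissible cut, and one must check that the neck can be inserted compatibly with the composition along $N$ and that the map genuinely factors through the flavor of the Floer group of $\Sigma \times S^1$ for which the vanishing holds. I expect this reconciliation to be the main obstacle.

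Finally, under hypothesis (b) with $[\Sigma]^2 < 0$, blow-ups cannot lower $[\Sigma]^2$ to zero, so instead I would establish the relative analogue of the adjunction relation \cite[Theorem 3.1]{os-symp}, which is again a statement local near $\Sigma$ proven from the surgery exact triangle. Since $\Sigma$ is closed and lies in $\Int(X)$, its class restricts trivially to $Y$, so all the \spinc structures $\s + k\,\PD[\Sigma]$ share the restriction $\s|_Y$; the relative simple-type condition of Definition~\ref{def:relsimple} then pins the value of $c_1(\s + k\,\PD[\Sigma])^2$, and because $[\Sigma]^2 < 0$ the resulting quadratic in $k$ has this pinned value only for finitely many $k$. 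Combining the adjunction relation with the finiteness of \spinc structures having $\Phi_{X,\s}\neq 0$ (\cite[Theorem 1.4]{os4}) then confines the nonzero invariants to the adjunction range, giving \eqref{eq:OSz-adjunction-bdy}. Here the remaining bookkeeping -- tracking grading shifts, the quantity $\dsh(W,\s)$, and the $d$-invariant $d(Y,\s|_Y)$ entering Definition~\ref{def:relsimple} -- I expect to be routine.
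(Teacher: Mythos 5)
Your overall architecture is the same as the paper's: in case (a) reduce to $[\Sigma]^2=0$ by blowing up inside $\Int(X)$, and in case (b) use an adjunction relation that shifts $\s$ by $\PD([\Sigma])$ without changing its restriction to $\de X$, contradicting relative simple type. However, the step you yourself flag as ``the main obstacle'' in case (a) is a genuine gap, and it is exactly where the paper does its real work. Neck stretching is a gauge-theoretic operation with no Heegaard Floer counterpart; the substitute is the composition law for cobordism maps, and the relevant map does not factor through $\HFp(\Sigma \times S^1)$ but through $\HFp(Y \# (S^1 \times \Sigma), \s)$. Moreover, you cannot simply ``arrange'' that $\Sigma \subset X_2$ for a fixed admissible cut: the paper instead \emph{builds} the cut around $\Sigma$. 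Since $b_2^+(X)>1$, there is a class $\alpha$ with $\alpha^2>0$ and $\alpha \cdot [\Sigma] = 0$; represent it by a surface $T$, tube away intersections so that $T \cap \Sigma = \emptyset$, and take $N = \de\nbd(T)$, so $\Sigma \subset X_2$ automatically. Then, since $[\Sigma]^2 = 0$, the cobordism $X_2 \colon N \to Y$ splits along a middle level obtained by tubing $\de\nbd(\Sigma) = S^1 \times \Sigma$ into $Y$, so $F^+_{X_2,\s}$ factors through $\HFp(Y \# (S^1\times\Sigma),\s)$; this group vanishes when $\abs{\scal{c_1(\s)}{[\Sigma]}} > 2g(\Sigma)-2$ by the three-manifold adjunction vanishing theorem (\cite[Theorem~7.1]{OSz-properties}, which applies to \emph{any} three-manifold containing an embedded genus-$g$ surface, not just $\Sigma\times S^1$ itself), and hence so does $\Phi_{X,\s}$. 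This connected-sum factoring is the missing reconciliation.

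In case (b) your ingredients are essentially the paper's --- the relative adjunction relation exists (the paper quotes Zemke \cite[Theorem~1.5]{ZemkeAdj} rather than re-deriving \cite[Theorem~3.1]{os-symp}), and your observation that $\s + k\PD([\Sigma])$ all restrict to $\s|_Y$ is exactly the paper's --- but your concluding mechanism (pinning $c_1^2$ as a quadratic in $k$, plus finiteness of basic classes) suggests an iteration that does not run: after one shift, $\scal{c_1(\s+\PD([\Sigma]))}{[\Sigma]} + [\Sigma]^2 = 2g(\Sigma) + 2[\Sigma]^2 < 2g(\Sigma)$, so the hypothesis of the adjunction relation fails, and blow-ups can only decrease this quantity further. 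No iteration is needed. The paper first blows up to normalize $\scal{c_1(\s)}{[\Sigma]} + [\Sigma]^2 = 2g(\Sigma)$ (this normalization, which your sketch omits, is required to apply Zemke's relation), then a single application gives $\Phi_{X,\s+\PD([\Sigma])} \neq 0$ with $d(\s+\PD([\Sigma])) = d(\s) + 2g(\Sigma) > d(\s)$; since relative simple type forces every nonvanishing class restricting to $\s|_Y$ to have $d = d(Y,\s|_Y)$, this is a contradiction in one step.
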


\begin{proof}
In case (a) we follow Ozsv\'ath-Szab\'o's proof of \cite[Theorem 1.5]{os4}. For any splitting of $X$ into two parts $X_1$ and $X_2$ along an admissible cut, we can reduce to the case $[\Sigma]^2 = 0$ by repeatedly blowing up: each copy of $\bCP$ can be added to either $X_1$ or $X_2$, and by \cite[Theorem 3.7 with $\ell=0$]{os4} the mixed invariant map is still non-vanishing. The surface $\Sigma$ is replaced by the connected sum of itself with the exceptional divisor, and the \spinc structure on $\bCP$ is chosen so that $\scal{c_1(\s)}{[\Sigma]} + [\Sigma]^2$ does not change.

We now assume $[\Sigma]^2=0$. 
Since $b_2^+(X)>1$, there exists a homology class $\alpha \in H_2(X;\Z)$ with $\alpha^2>0$ and $\alpha\cdot [\Sigma] =0$.
We represent $\alpha$ by a smoothly embedded surface $T$ transverse to $\Sigma$. By adding tubes on $T$ to cancel intersections with $\Sigma$, we can also assume that $T$ and $\Sigma$ are disjoint (note that $[T]=\alpha$ does not change while doing so). Let $\nbd(T)$ be a tubular neighborhood of $T$. 
Then $N = \de\nbd(T)$ is a new admissible cut of $X$, and we can assume that $\Sigma \subset X_2$.
Since $[\Sigma]^2 = 0$, the boundary of a neighborhood of $\Sigma$ is $S^1 \times \Sigma$. By tubing this $S^1 \times \Sigma$ to the boundary of $X$, we find that the cobordism map
\[
F_{X_2, \s}^+ \colon \HFp(N, \s) \to \HFp(\del X, \s)
\]
factors through $\HFp(\del X \# (S^1 \times \Sigma), \s)$. This group vanishes by \cite[Theorem 7.1]{OSz-properties}, unless
\[
|\scal{c_1(\s)}{[\Sigma]}| \leq 2g(\Sigma)-2.
\]
This concludes the proof of case (a), since the vanishing of $\HFp(\del X \# (S^1 \times \Sigma), \s)$ implies that of $\Phi_{X,\s}$ too.

We now turn to case (b).
Assume by contradiction that the inequality \eqref{eq:OSz-adjunction-bdy} does not hold.
By blowing up appropriately we can repeatedly reduce the quantity $\scal{c_1(\s)}{[\Sigma]} + [\Sigma]^2$ by $2$ until
\[
\scal{c_1(\s)}{[\Sigma]} + [\Sigma]^2 = 2g(\Sigma),
\]
while keeping $\Phi_{X, \s} \neq 0$. (Recall that the left hand side of the previous equation is always even because $c_1(\s)$ is characteristic.)

As before, we find an admissible cut of $X$ with $\Sigma \subset X_2$.
Then, by \cite[Theorem 1.5]{ZemkeAdj} applied to the orientation reversal $\overline{\Sigma}$, with $\Sigma_\w$ being a disk and $\Sigma_\z = \overline{\Sigma} \setminus \Sigma_\w$, we have
\[
F_{X_2, \s}^+(-) = F_{X_2, \s+PD([\Sigma])}^+(\iota_*(\xi(\Sigma_\z)) \otimes -),\]
following the notation of \cite{ZemkeAdj}. Thus, on the mixed invariant level, we get that
\[
\Phi_{X, \s+PD([\Sigma])}(\iota_*(\xi(\Sigma_\z)) \otimes -) = \Phi_{X, \s} (-) \neq 0,
\]
showing that $\Phi_{X, \s+PD([\Sigma])} \neq 0$.

Note that $\s$ and $\s + PD([\Sigma])$ restrict to the same \spinc structure on $\de X$: this is because the condition $\Sigma \subset \Int(X)$ implies that $PD([\Sigma]) \in H^2(X)$ maps to $0 \in H^2(\de X)$ under the map induced by restriction, hence $PD([\Sigma])$ acts trivially on $\Spinc(\de X)$.

By arguing as in \cite[Corollary 1.7]{os-thom}, we get  
\begin{align*}
d(\s+PD([\Sigma])) &= \frac{(c_1(\s)^2 + 4\scal{c_1(\s)}{[\Sigma]} + 4[\Sigma]^2) - (2\chi(X)+3\sigma(X))}4  -\frac12 \\
&= d(\s) + \scal{c_1(\s)}{[\Sigma]} + [\Sigma]^2 = d(\s) + 2g(\Sigma) >d(\s).
\end{align*}
Both $\Phi_{X, \s}$ and $\Phi_{X, \s+PD([\Sigma])}$ are nonzero, but at least one of $d(\s)$ and $d(\s+PD([\Sigma]))$ must be different from $d(Y, \s|_Y) = d(Y, (\s + PD([\Sigma]))|_Y).$ This is a contradiction with the relative Ozsv\'ath-Szab\'o simple type assumption.
\end{proof}

\subsection{Relative adjunction inequalities}
We will be interested in surfaces with boundary in $4$-manifolds of the form $\Xo=X \setminus \intB$, where $X$ is closed. The adjunction inequality for closed surfaces (Theorem \ref{thm:OSz-adjunction}) has the following immediate consequence.

\begin{theorem}
\label{thm:adjunction-g4}
Let $\Sigma \subset \Xo$ be a properly embedded surface with $g(\Sigma) + g_4(K) >0$, where $X$ is a closed 4-manifold with $b_2^+(X)>1$. 
Suppose that either
\begin{enumerate}[(a)]
\item $[\Sigma]^2 \geq 0$; or
\item $X$ is of Ozsv\'ath-Szab\'o simple type.
\end{enumerate}
Then, for each \spinc structure $\s \in \Spinc(X)$ for which $\Phi_{X, \s} \neq 0$, we have
\begin{equation}
\label{eq:adj-g4}
\scal{c_1(\s)}{[\Sigma]} + [\Sigma]^2 \leq 2g(\Sigma)-2 + 2g_4(K).
\end{equation}
\end{theorem}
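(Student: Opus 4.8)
The plan is to reduce Theorem~\ref{thm:adjunction-g4} to the closed-surface adjunction inequality (Theorem~\ref{thm:OSz-adjunction}) by capping off $\Sigma$ with a minimal-genus slice surface in a four-ball. Concretely, let $F \subset B^4$ be a smooth, connected, properly embedded surface of minimal genus with $\del F = \mK$, so that $g(F) = g_4(\mK) = g_4(K)$. Gluing this $B^4$ to $\Xo$ along $S^3$ recovers $X$; since the boundary orientation of $\del B^4$ in $X$ is opposite to that of $\del \Xo$, the boundary $K = \del \Sigma$ is identified with $\mK = \del F$, and after smoothing the corner along this circle we obtain a closed, connected, smoothly embedded surface $\widehat{\Sigma} := \Sigma \cup_K F \subset X$.

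First I would record how the relevant quantities behave under this capping. Because $B^4$ has $H_2 = 0$, the long exact sequence of the pair identifies $H_2(\Xo, \del \Xo) \cong H_2(X)$, and under this identification $[\widehat{\Sigma}] = [\Sigma]$; in particular $[\widehat{\Sigma}]^2 = [\Sigma]^2$, this being precisely the definition of the self-intersection of the surface-with-boundary $\Sigma$. The genus adds across the gluing circle, so $g(\widehat{\Sigma}) = g(\Sigma) + g(F) = g(\Sigma) + g_4(K)$, and the hypothesis $g(\Sigma) + g_4(K) > 0$ becomes exactly the positivity $g(\widehat{\Sigma}) > 0$ required by Theorem~\ref{thm:OSz-adjunction}.

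Next I would check that the hypotheses of Theorem~\ref{thm:OSz-adjunction} transfer verbatim. The ambient manifold $X$ is closed with $b_2^+(X) > 1$; condition (a) reads $[\widehat{\Sigma}]^2 = [\Sigma]^2 \geq 0$, condition (b) is the simple-type hypothesis on $X$ itself, and the \spinc condition $\Phi_{X, \s} \neq 0$ is unchanged. Applying Theorem~\ref{thm:OSz-adjunction} to $\widehat{\Sigma}$ gives
\[
\scal{c_1(\s)}{[\widehat{\Sigma}]} + [\widehat{\Sigma}]^2 \leq 2g(\widehat{\Sigma}) - 2.
\]
Substituting $[\widehat{\Sigma}] = [\Sigma]$ and $g(\widehat{\Sigma}) = g(\Sigma) + g_4(K)$ yields the claimed inequality.

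I expect the only real points requiring care to be bookkeeping rather than substance: getting the orientation convention right so that $F$ bounds the mirror $\mK$ (using $g_4(\mK) = g_4(K)$ so that the final bound is stated symmetrically in terms of $g_4(K)$), and confirming that a minimal-genus slice surface for $\mK$ may be taken connected so that $\widehat{\Sigma}$ is connected as Theorem~\ref{thm:OSz-adjunction} requires. The smoothing of the corner along $K$ and the identification of self-intersection numbers under $H_2(\Xo, \del\Xo) \cong H_2(X)$ are standard.
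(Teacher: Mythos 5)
Your proposal is correct and is essentially identical to the paper's proof: the authors likewise cap off $\Sigma$ with a genus-$g_4(K)$ surface in $B^4$ bounding $\mK$ and apply Theorem~\ref{thm:OSz-adjunction} to the resulting closed surface in $X$. The bookkeeping you spell out (additivity of genus, invariance of the homology class and self-intersection, and the orientation convention forcing the cap to bound the mirror) is exactly what the paper leaves implicit.
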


\begin{proof}
Choose a surface $S \subset B^4$ with boundary $\mK$ and genus $g_4(K)$. By gluing $\Sigma$ and $S$ together we get a closed surface in $X$, to which we can apply Theorem \ref{thm:OSz-adjunction}.
\end{proof}

We will refine this using the invariant $\nu^+$ constructed by Hom and Wu in \cite{HW}.
Let $\CFKi(K)$ denote the ($\Z \oplus \Z$)-filtered knot Floer complex of a knot $K$ in $S^3$, and let $i$ and $j$ denote the two filtration indices. Following \cite{HomSurvey}, we define $A_s^- = \mc C\set{\max\set{j-s,i}\leq0}$ and $B^- = \mc C\set{i\leq0}$, where $\mc C\{ \phantom{\cdot} \}$ denotes the subcomplex of $\CFKi(K)$ given by the respective inequality. The inclusion map $v_s^- \colon A_s^- \to B^-$ induces a map $v_{s,*}^-$ in homology, and we define
\[
V_s(K) := \rk_{\F_2} (\coker v_{s,*}^-).
\]
The numbers $V_s(K)$ are concordance invariants. They are non-negative, non-increasing in $s$, and they vanish for $s\gg0$.
They were originally defined by Rasmussen~\cite{Rasmussen}, although the notation $V_s(K)$ was introduced in \cite[Section 8]{RatSurg}.
The invariant $\nu^+$ is defined as
\[
\nu^+(K) := \min\set{s \geq 0 \,\middle|\, V_s(K) = 0}.
\]
(Strictly speaking, this is the definition of $\nu^-(K)$, but $\nu^-(K) = \nu^+(K)$ by \cite[Proposition 2.13]{Upsilon}.)

{
\renewcommand{\thethm}{\ref{thm:adjunction-nu+}}
\begin{theorem}
Let $X$ be a closed 4-manifold, with $b_2^+(X)>1$. Let $\Sigma \subset \Xo$ be a properly embedded surface with $g(\Sigma) > 0$ and $\del \Sigma = K$, and let $\mK$ denote the mirror of $K$. Suppose that either  $[\Sigma]^2 \geq 2\nu^+(\mK)$ or $X$ is of Ozsv\'ath-Szab\'o simple type. Then, for every \spinc structure $\s \in \Spinc(X)$ for which the mixed invariant $\Phi_{X, \s}$ is non-zero, we have
\begin{equation}
\label{eq:adjunction-nu+}
\scal{c_1(\s)}{[\Sigma]} + [\Sigma]^2 \leq 2g(\Sigma)-2 + 2\nu^+(\mK).
\end{equation}
\end{theorem}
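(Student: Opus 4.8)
The plan is to sharpen Theorem~\ref{thm:adjunction-g4} by replacing the genus-$g_4$ cap in $B^4$ with the more economical cap suggested in the introduction: instead of filling with $B^4$, take $Z$ to be the trace of a surgery and let the cap be the core of the $2$-handle. Concretely, fix a framing $f$ and let $W$ be the trace of $f$-framed surgery on $\mK$, a cobordism from $S^3$ to $Y := S^3_f(\mK)$ with core disk $D$, $\de D = \mK$. Glue the incoming $S^3$ of $W$ to $\de\Xo$ (the orientation-reversing gluing is exactly what turns $\de\Sigma = K$ into $\mK$, which is why the mirror appears), forming $X' := \Xo \cup_{S^3} W$ with $\de X' = Y$. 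Then $\widehat\Sigma := \Sigma \cup_K D$ is a closed, connected, genus-$g(\Sigma)$ surface pushed into $\Int(X')$. For $f \neq 0$ the manifold $Y$ is a rational homology sphere, and $b_2^+(X') \geq b_2^+(X) > 1$, so $X'$ is a legitimate input for the adjunction inequality with boundary, Theorem~\ref{thm:OSz-adjunction-bdy}; this is precisely the tool that the boundary version was set up to provide.

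Next I would carry out the homological bookkeeping. Taking $[\Sigma]^2$ relative to the Seifert framing of $K$, the capped surface has $[\widehat\Sigma]^2 = [\Sigma]^2 + f$, and for the spin$^c$ structure $\s' \in \Spinc(X')$ restricting to $\s$ on $\Xo$ and to $\t$ on $W$, one computes $\scal{c_1(\s')}{[\widehat\Sigma]} + [\widehat\Sigma]^2 = \scal{c_1(\s)}{[\Sigma]} + [\Sigma]^2 + 2s$, where $s$ is the integer parametrizing $\t$ by $\scal{c_1(\t)}{[D]} = 2s - f$. Choosing the economical framing $f = -2\nu^+(\mK)$ and the spin$^c$ structure with $\scal{c_1(\t)}{[D]} = 0$ (permissible, since $f$ is even) yields $[\widehat\Sigma]^2 = [\Sigma]^2 - 2\nu^+(\mK)$ together with $\scal{c_1(\s')}{[\widehat\Sigma]} + [\widehat\Sigma]^2 = \scal{c_1(\s)}{[\Sigma]} + [\Sigma]^2 - 2\nu^+(\mK)$. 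This identifies hypothesis~(a) of Theorem~\ref{thm:OSz-adjunction-bdy}, namely $[\widehat\Sigma]^2 \geq 0$, with the present hypothesis $[\Sigma]^2 \geq 2\nu^+(\mK)$, while hypothesis~(b) matches the simple-type alternative; so the two cases of the theorem correspond exactly to the two cases of the boundary adjunction inequality applied to $\widehat\Sigma$.

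The heart of the argument, and the step I expect to be hardest, is to show that the mixed invariant survives the cap, i.e.\ that $\Phi_{X', \s'} \neq 0$. By functoriality $\Phi_{X',\s'}$ is the composite of $\Phi_{X,\s}$ (nonzero by hypothesis, with image the bottom of the tower in $\HFp(S^3)$) with the $2$-handle cobordism map $F^+_{W,\t}$, so it suffices to see that $F^+_{W,\t}$ carries that element to a nonzero class. I would analyze this map through the integer surgery formula of Ozsv\'ath--Szab\'o \cite{RatSurg}, in which the cobordism maps are assembled from $v^+_s$ and $h^+_s$, whose failure to hit the tower is measured by $V_s$ and $H_s = V_{-s}$; the defining property $V_s(\mK) = 0 \iff s \geq \nu^+(\mK)$ then forces the bottom tower element to survive for the chosen $\s'$, and it is here that the threshold $\nu^+(\mK)$—hence the sharp term $2\nu^+(\mK)$—enters. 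Granting this, Theorem~\ref{thm:OSz-adjunction-bdy} applied to $\widehat\Sigma \subset X'$ gives $\scal{c_1(\s')}{[\widehat\Sigma]} + [\widehat\Sigma]^2 \leq 2g(\widehat\Sigma) - 2 = 2g(\Sigma) - 2$, and substituting the identity above yields the claim. The main obstacles to pin down are exactly the fine points hidden in that last step: the sign conventions for the framing and for the orientation-reversing gluing (so that the governing invariant is $V_\bullet(\mK)$ rather than $V_\bullet(K)$ and the term acquires the correct sign), the grading matching ensuring $F^+_{W,\t}$ sends the mixed-invariant element to the bottom of the tower of $\HFp(Y,\s'|_Y)$, and—in case~(b)—checking that $X$ of Ozsv\'ath--Szab\'o simple type implies $X'$ is of relative Ozsv\'ath--Szab\'o simple type in the sense of Definition~\ref{def:relsimple}.
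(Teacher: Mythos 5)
Your proposal follows the same route as the paper's proof: cap $\Sigma$ with the core of a negatively framed $2$-handle attached along $K$, apply the boundary adjunction inequality (Theorem~\ref{thm:OSz-adjunction-bdy}) to the capped surface, and reduce the nonvanishing of the capped mixed invariant to the vanishing threshold of the $V_s(\mK)$'s. Your specific choices---framing $-2\nu^+(\mK)$ and the \spinc structure with $c_1$ evaluating to zero on the core---coincide with the paper's optimal parameters ($n = 2\nu^+(\mK)$, $i = n-\nu^+(\mK)$ in its notation), and your homological bookkeeping is correct. The paper establishes the key nonvanishing statement by combining nontriviality of the $2$-handle map on $\HFi$ \cite[Proposition 9.4]{OSzAbsolutely}, $U$-equivariance (which forces any surviving tower element to land at the bottom of the target tower), and the Ni--Wu $d$-invariant formula \cite{NiWu}, rather than by locating the cobordism map inside the integer surgery mapping cone of \cite{RatSurg} as you propose; these are equivalent in substance, the $d$-invariant formula being a repackaging of the mapping cone. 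Be aware, though, that the two steps you defer as ``obstacles to pin down''---the grading match sending bottom to bottom, and the claim that $X$ of Ozsv\'ath--Szab\'o simple type makes the capped manifold of relative simple type---are exactly where the bulk of the paper's proof lies, so what you have is a correct skeleton rather than a complete argument.

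The genuine gap is the degenerate case $\nu^+(\mK) = 0$. Your prescription then gives $f = 0$, so $Y = S^3_0(\mK)$ is not a rational homology sphere---precisely the situation your setup excludes with ``for $f \neq 0$.'' This case cannot be dodged by choosing a different framing: a positively framed handle produces a cobordism with $b_2^+ = 1$, whose map on $\HFi$ vanishes and hence kills the entire tower of $\HFp(S^3)$, so one needs $-f \geq 0$; and under hypothesis (a) with $[\Sigma]^2 = 0$ one also needs $-f \leq [\Sigma]^2 = 0$ to keep $[\widehat\Sigma]^2 \geq 0$. Thus $f = 0$ is forced. Then $\HFp(S^3_0(\mK), \t_0)$ has two towers, the Ni--Wu formula (and the surgery-formula analysis you invoke) is stated for nonzero surgery coefficient, and Definition~\ref{def:relsimple} does not even apply. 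The paper devotes a separate argument to this case, using Gainullin's computation of $\HFp(S^3_0(K), \t_0)$ \cite{Gainullin}, nontriviality of the $0$-surgery map \cite[Proposition 9.3]{OSzAbsolutely}, and a grading-parity argument to show the bottom element still maps to the bottom of the correct tower. Relatedly, in case (b) with $\nu^+(\mK)=0$ you must take $-f > 0$ to stay in the rational homology sphere setting, and then your symmetric \spinc choice ($c_1$ zero on the core) only yields the weaker bound $2g(\Sigma)-2+(-f)$; the sharp bound requires the asymmetric structure with $\scal{c_1(\t)}{[D]} = -f$, i.e.\ the general-$i$ optimization that the paper carries out.
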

\addtocounter{thm}{-1}
}

\begin{proof}
We begin by setting up some objects and notation: Let $n \geq 0$. We denote by $\hat X_{-n}$ the manifold obtained by attaching a $(-n)$-framed 2-handle along $K$ to $\Xo$. We denote the trace of the surgery by $X_{-n}(K)$, so that $\hat X_{-n} = \Xo \cup_{S^3} X_{-n}(K)^\circ$.
Let $\hat\Sigma_{-n}$ denote the surface obtained by capping off $\Sigma$ with the core of the 2-handle. 

Let $\hat\s$ be a \spinc structure on $\hat X_{-n}$. Consider the punctured \emph{positive} trace $X_n(\mK)^\circ$. Let $H \in H_2(X_n(\mK)^\circ)$ be the homology class given by the core of the 2-handle union the cone on the knot $K$. For $i \in \Z$, let $\s_i$ denote the spin$^c$ structure on  $X_n(\mK)^\circ$ with $\scal{c_1(\s_i)}{H} = n + 2i$. We denote by $\t_i$ its restriction to $S^3_n(\mK)$; note that $\t_i = \t_{n+i}$. Following usual conventions, we let $H$ and $\s_i$ denote also the corresponding homology class and spin$^c$ structure on the orientation reversal $X_{-n}(K)^\circ$, which has $\scal{c_1(\s_i)}{H} = -n-2i$. 

Notation in hand, we observe that Theorem \ref{thm:OSz-adjunction-bdy} gives
\begin{equation}
\label{eq:adj-nu+-pf-1}
\scal{c_1(\hat\s)}{[\hat \Sigma_{-n}]} + [\hat \Sigma_{-n}]^2 \leq 2g(\hat\Sigma_{-n})-2
\end{equation}
whenever we are in the setting
\begin{itemize}
\item $[\hat \Sigma_{-n}]^2 =[\Sigma]^2 - n \geq 0$ or $\hat X_{-n}$ is of Ozsv\'ath-Szab\'o simple type;
\item $g(\hat\Sigma_{-n}) = g(\Sigma)>0$; and
\item the map induced by the 2-handle cobordism $X_{-n}(K)^\circ$,
\begin{equation}
\label{eq:HFp}
F^+ \colon \HFp(S^3) \to \HFp(S^3_{-n}(K), \hat\s|_{S^3_{-n}(K)}),
\end{equation}
is nonzero on the bottom element. 
\end{itemize}

We now check that we can choose $n$ and $\hat \s$ so that these conditions are satisfied. We will first assume that either $[\Sigma]^2>0$ or $X$ is of Ozsv\'ath-Szab\'o simple type. We will deal with the special case $[\Sigma]^2=0$ at the end. Notice that the second bullet point is satisfied by hypothesis and when $X$ is not of Ozsv\'ath-Szab\'o simple type we can satisfy the first half of the first bullet point by  choosing $0<n\le[\Sigma]^2$.

When $X$ is of Ozsv\'ath-Szab\'o simple type, we will now argue that $\hat X_{-n}$ is of relative Ozsv\'ath-Szab\'o simple type for all $n>0$. Suppose that $\Phi_{\hat X_{-n}, \hat \s} \neq 0$ for some $\hat \s =\s \# \s_i$. We choose an admissible cut for $X$ and use the same cut for the manifold $\hat X_{-n}$. Then one can immediately see that 
$$\Phi_{\hat X_{-n}, \hat \s} = F^+ \circ \Phi_{X, \s},$$ and therefore $\Phi_{X, \s} \neq 0$. Since $d(\s) = 0$ (by the simple type assumption) and since the action of $\Lambda^k(H_1(X;\Z)/\Tors)$ lowers the degree by $k$, by grading considerations we have
\[
\Phi_{X, \s}(h \otimes \xi) = 0
\]
whenever $h \in \bigoplus_{k \geq 1} \Lambda^k(H_1(X;\Z)/\Tors) \subset \A(X)$ or $\xi \in U \cdot \HFm(S^3) \subset \HFm(S^3)$. Thus, the fact that $\Phi_{X, \s} \neq 0$ implies that the following relation holds:
\[
\Phi_{X, \s}(1_{\A(X)} \otimes 1_{\HFm(S^3)}) = 1_{\HFp(S^3)}.
\]
Thus, the fact that $\Phi_{\hat X_{-n}, \hat\s} \neq 0$ implies that $F^+(1_{\HFp(S^3)}) \neq 0$.
Because the map induced by $X_{-n}(K)^\circ$ on $\HFi$ is nontrivial for $n > 0$ and for all \spinc structures~\cite[Proposition 9.4]{OSzAbsolutely}, $F^+$ must map $1_{\HFp(S^3)}$ nontrivially to the tower of $\HFp(Y, \t_i)$.
Then, the $U$-equivariance forces $F^+$ to send $1_{\HFp(S^3)}$ to the bottom element $\x$ of such a tower. Thus, we can compute
\[
\gr(F^+) = \gr(\x) - \gr(1_{\HFp(S^3)}) = d(S^3_{-n}(K), \t_i) - d(S^3) = d(S^3_{-n}(K), \t_i).
\]
Finally, using that $d(\s)=0$, we compute 
\begin{align*}
d(\hat \s) &= \dsh(\hat X_{-n} \setminus \intB, \hat \s) - 1\\
&= (\dsh(X\setminus \intB, \s) -1) + \dsh(X_{-n}(K)^\circ, \s_i)\\
&= d(\s) + \gr(F^+)\\
&= d(S^3_{-n}(K), \t_i)
\end{align*}
proving that $\hat X_{-n}$ is of relative Ozsv\'ath-Szab\'o simple type.

Now, to check when condition \eqref{eq:HFp} is satisfied, we consider the map induced by $X_{-n}(K)^\circ$ on $\HFi$, which is nontrivial for $n > 0$ and for all \spinc structures~\cite[Proposition 9.4]{OSzAbsolutely}.

Under our assumption that $n>0$ there is a single tower in both the source and the target of $F^+$. 
Thus, $F^+$ is nonzero on the bottom element if and only if its grading shift $\gr(F^+)$ coincides with $d(S^3_{-n}(K), \hat\s|_{S^3_{-n}(K)})$.

If we choose the \spinc structure $\s_{-i}$ on $X_{-n}(K)^\circ$, we compute
\[
\gr(F^+) = \frac{c_1(\s_{-i})^2 - 2\chi - 3\sigma}{4} = \frac{-\frac{(-n+2i)^2}n - 2\cdot1 - 3\cdot(-1)}4 = \frac{n-(n-2i)^2}{4n}.
\]

On the other hand, using $S^3_{-n}(K) = -S^3_{n}(\mK)$ and $d(-Y,\t) = -d(Y, \t)$, we get $d(S^3_{-n}(K), \t_{-i}) = -d(S^3_{n}(\mK), \t_{-i})$, and by Ni-Wu's formula~\cite[Proposition 1.6 and Remark 2.10]{NiWu}, which holds for $n>0$, we get
\[
d(S^3_{-n}(K), \t_{-i}) = \frac{n-(n-2[-i])^2}{4n} + \max\set{V_{[-i]}(\mK), V_{n-[-i]}(\mK)},
\]
where $[-i]\in \{0, 1, \dots, n-1\}$ denotes the reduction of $-i \pmod n$.

By imposing $\gr(F^+) = d(S^3_{-n}(K), \t_{-i})$, we get
\begin{equation}
\label{eq:F+gr=d}
\frac{(n-2[-i])^2}{4n} = \frac{(n-2i)^2}{4n} + \max\set{V_{[-i]}(\mK), V_{n-[-i]}(\mK)}.
\end{equation}

Since the $V_k$'s are non-negative, for $i<0$ or $i>n$ the right hand side of Equation \eqref{eq:F+gr=d} is stricly bigger the the left hand side, so the equality cannot hold.
For $0 \leq i \leq n$, the terms $\frac{(n-2[-i])^2}{4n}$ and $\frac{(n-2i)^2}{4n}$ coincide, so Equation \eqref{eq:F+gr=d} reduces to
\[
\max\{V_{n-i}(\mK), V_{n-(n-i)}(\mK)\} = 0.
\]
Since the $V_k$'s are non-increasing, we can rewrite it as
\[
V_{\min\{i, n-i\}}(\mK)=0.
\]
This is possible if and only if $\nu^+(\mK) \leq n/2$. So we can satisfy the third bullet point by choosing $n\ge 2\nu^+(\mK)$. 
When $X$ is of Ozsv\'ath-Szab\'o simple type, we can choose any $n \geq \max\set{1, 2\nu^+(\mK)}$.
The other case was $[\Sigma]^2>0$: recall that in such a case we previously chose $0<n\le[\Sigma]^2$, so we understand the necessity of our hypothesis $2\nu^+(\mK)\le [\Sigma]^2$.

For any such an $n$, we will try to maximize the left hand side of Equation \eqref{eq:adj-nu+-pf-1}. Towards that aim, choose $\hat\s = \s \# \s_{-i}$ on $\hat X_{-n}$, then
\[
\scal{c_1(\hat\s)}{[\hat\Sigma_{-n}]} = \scal{c_1(\s)}{[\Sigma]} + \scal{c_1(\s_{-i})}{H} = \scal{c_1(\s)}{[\Sigma]} + (-n+2i).
\]
Therefore, Equation \eqref{eq:adj-nu+-pf-1} becomes
\begin{equation}
\label{eq:adj-nu+-pf-2}
\scal{c_1(\s)}{[\Sigma]} + [\Sigma]^2 - 2n +2i \leq 2g(\Sigma)-2.
\end{equation}

To maximize the left hand side of Equation \ref{eq:adj-nu+-pf-2} we should choose $i = n-\nu^+(\mK)$, which yields precisely the inequality \eqref{eq:adjunction-nu+}.

We now consider the case when $[\Sigma]^2=0$ (and when $X$ is not of Ozsv\'ath-Szab\'o simple type). In such a case, our hypothesis $[\Sigma]^2 \geq 2\nu^+(\mK)$ forces $\nu^+(\mK) = 0$, and therefore $V_0(\mK) = 0$ too.
Let $X_0(K)$ be the trace of the $0$-surgery, endowed with the spin structure $\s_0$, characterised by $c_1(\s_0)^2 = 0$, and let $\t_0$ denote the restriction of $\s_0$ to $S^3_0(K)$. By \cite[Proposition 22]{Gainullin},
\[
\HFp(S^3_0(K), \t_0) = \Tower^+_{-\frac12+2V_0(\mK)} \oplus \Tower^+_{\frac12-2V_0(K)} \oplus \HFred(S^3_0(K), \t_0),
\]
where $\Tower_d$ denotes a copy of the tower $\F[U, U^{-1}]/(U\F[U])$ starting in minimal degree $d$, and $\HFred$ is the reduced Heegaard Floer group from \cite[p.1068]{HolDisk}.

Since $U$ decreases the Maslov grading by $2$, the homogeneous elements of $\Tower^+_{-\frac12+2V_0(\mK)}$ have Maslov grading in $2\Z-\frac12$, while those of $\Tower^+_{\frac12-2V_0(K)}$ have Maslov grading in $2\Z+\frac12$.

By \cite[Proposition 9.3]{OSzAbsolutely}, the map
\[
F^+_{\s_0} \colon \HFp(S^3) \to \HFp(S^3_0(K), \t_0)
\]
is non-trivial, so its image must be one of the two towers. Since the grading shift is
\[
\gr(F^+_{\s_0}) = \frac{c_1^2(\s_0) - 2\chi(X_0(K)) - 3\sigma(X_0(K))}{4} = -\frac12 \in 2\Z-\frac12,
\]
we deduce that $\im (F^+_{\s_0}) = \Tower^+_{-\frac12+2V_0(\mK)}$.
Then, the condition $V_0(\mK) = 0$ and the fact that $\gr(F^+_{\s_0}) = -\frac12$ guarantee that the bottom element of $\HF^+(S^3)$ is sent to the bottom element of $\Tower^+_{-\frac12+2V_0(\mK)} = \Tower^+_{-\frac12}$. Thus, condition \eqref{eq:HFp} is satisfied. %
\end{proof}

Hom and Wu proved in  \cite{HW} that
\[
0 \leq \nu^+(\mK) \leq g_4(K),
\]
so the inequality \eqref{eq:adjunction-nu+} is stronger than \eqref{eq:adj-g4}. On the other hand,  the inequality \eqref{eq:adjunction-nu+} says nothing about null-homologous surfaces, because $\nu^+(\mK) \geq 0$ is already automatic.

\begin{remark}
Theorem~\ref{thm:adjunction-nu+} should be compared to another relative adjunction inequality, due to Hedden and Raoux \cite{HR}. They proved that, if $X$ is a smooth, oriented four-manifold with boundary $Y$ and $\Sigma \subset X$ a properly smoothly embedded surface such that the relative element $F_{X, \s} \in \widehat{HF}(Y)$ is nontrivial, then
$$2 \tau(K) + \langle c_1(\s), [\Sigma]\rangle + [\Sigma]^2\leq 2g(\Sigma).$$
When $Y=S^3$, the hypotheses are only satisfied for negative definite $4$-manifolds, and we get the inequality 
$$ 2 \tau(K) + \| [\Sigma]\|_{L^1} + [\Sigma]^2\leq 2g(\Sigma),$$
which was previously proved by Ozsv\'ath and Szab\'o \cite{os-tau}. 
\end{remark}

\begin{remark}
\label{rem:mr}
In \cite{MR}, Mrowka and Rollin proved a genus bound for surfaces in four-manifolds with contact boundary. Specializing to the case when the boundary is $S^3$ (with its standard tight contact structure $\xi$), their result says that if $\Xo$ has non-trivial Seiberg-Witten invariant in the \spinc structure $\s$, relative to $\xi$, and $\Sigma \subset \Xo$ is a smoothly, properly embedded surface, then
\begin{equation}
\label{eq:MR}
 \langle c_1(\s), [\Sigma]\rangle + [\Sigma]^2\leq 2g(\Sigma) - 1 - \slbar(K),
 \end{equation}
where $\slbar(K)$ is the maximal self-linking number of transverse knots in the isotopy class of $K$.
However, the Seiberg-Witten invariant of $\Xo$ is different from that of $X$. The Seiberg-Witten invariant of $X$ is conjectured to be the same as the Ozsv\'ath-Szab\'o mixed invariant. The Seiberg-Witten invariant of $\Xo$  is expected to correspond to the image of the cobordism map on $\HFp$ rather than the mixed map, and therefore should be non-trivial only when $X$ is negative definite. In fact, a version of \eqref{eq:MR} using the map on $\HFp$ was proved by Wu \cite{Wu}.

We note that the work in \cite{MR} was preceded by genus bounds in Stein manifolds \cite{AM, LM}. Another related result is a version of the symplectic Thom conjecture for manifolds with boundary, which was proved in \cite[Theorem 7.2.3]{Bowden} and \cite[Theorem 1.2]{GK}: If $X$ is a symplectic four-manifold with convex boundary, and $\Sigma \subset X$ is a symplectic surface with boundary a transverse knot $K \subset \del X$, then $\Sigma$ is genus minimizing in its relative homology class.\end{remark}

\subsection{Applications}

\begin{corollary}
\label{cor:spinsymp}
Let $X$ be the $\KT$ surface. If $\nu^+(\mK) = 0$, then the knot $K$ does not bound a positive self-intersection surface with genus $g(\Sigma)\leq1$ in $\Xo$.
\end{corollary}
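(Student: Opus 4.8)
The plan is to argue by contradiction, applying the relative adjunction inequality of Theorem~\ref{thm:adjunction-nu+} to a conjugate pair of \spinc structures and summing the two resulting inequalities. So suppose $K$ bounds a properly embedded surface $\Sigma \subset \Xo$ with $[\Sigma]^2 > 0$ and $g(\Sigma) \leq 1$. First I would reduce to the case $g(\Sigma) = 1$: if $\Sigma$ is a disk, I attach a small trivial handle in a ball in the interior, raising the genus to $1$ while leaving $\del \Sigma = K$ and the relative class $[\Sigma]$ (hence $[\Sigma]^2 > 0$) unchanged. Thus it suffices to rule out the case $g(\Sigma) = 1$.

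The key structural input is that a closed symplectic $4$-manifold $X$ with $b_2^+(X) > 1$ carries a canonical \spinc structure $\s_{\mathrm{can}}$ whose Ozsv\'ath--Szab\'o mixed invariant is nonzero, $\Phi_{X, \s_{\mathrm{can}}} \neq 0$; this is the Heegaard Floer form of the symplectic Thom conjecture \cite{os-symp}. By the conjugation symmetry of the mixed invariants, the conjugate structure $\overline{\s}_{\mathrm{can}}$, which satisfies $c_1(\overline{\s}_{\mathrm{can}}) = -c_1(\s_{\mathrm{can}})$, also has $\Phi_{X, \overline{\s}_{\mathrm{can}}} \neq 0$. This gives me two \spinc structures with nonvanishing invariant whose first Chern classes are negatives of one another.

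Since $\nu^+(\mK) = 0$, the hypothesis $[\Sigma]^2 \geq 2\nu^+(\mK) = 0$ of Theorem~\ref{thm:adjunction-nu+} is satisfied automatically (so no simple-type assumption is needed). Applying the theorem to $\s_{\mathrm{can}}$ and to $\overline{\s}_{\mathrm{can}}$, and using $g(\Sigma) = 1$ together with $\nu^+(\mK) = 0$ so that the right-hand side is $2g(\Sigma) - 2 + 2\nu^+(\mK) = 0$, I obtain
\[
\scal{c_1(\s_{\mathrm{can}})}{[\Sigma]} + [\Sigma]^2 \leq 0, \qquad -\scal{c_1(\s_{\mathrm{can}})}{[\Sigma]} + [\Sigma]^2 \leq 0.
\]
Adding these yields $2[\Sigma]^2 \leq 0$, contradicting $[\Sigma]^2 > 0$ and completing the argument.

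The main obstacle is not the final arithmetic, which is immediate, but securing the two inputs that make the conjugation trick run: the non-vanishing $\Phi_{X, \s_{\mathrm{can}}} \neq 0$ for symplectic $X$ with $b_2^+ > 1$, and the conjugation invariance $\Phi_{X, \overline{\s}} \neq 0 \iff \Phi_{X, \s} \neq 0$ over $\F_2$. Both are standard, so the proof is short. I would also note that the argument in fact uses only that $X$ is symplectic with $b_2^+ > 1$; the spin hypothesis places the statement in the family relevant to the $\KT$ application discussed above, where $c_1(\s_{\mathrm{can}}) = 0$ and a single \spinc structure already forces $[\Sigma]^2 \leq 0$ without invoking the conjugate.
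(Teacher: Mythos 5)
Your proof is correct, and it takes a genuinely different route from the paper's. Both arguments begin the same way, stabilizing a disk to a genus-one surface and then applying Theorem~\ref{thm:adjunction-nu+} with right-hand side $2g(\Sigma)-2+2\nu^+(\mK)=0$; but the paper applies the inequality only \emph{once}, to the canonical \spinc structure $\mathfrak{k}$, using the spin hypothesis to argue that $c_1(\mathfrak{k})=0$, whereas you apply it \emph{twice} --- to $\s_{\mathrm{can}}$ and to its conjugate $\overline{\s}_{\mathrm{can}}$ --- and add the two inequalities to cancel the terms $\scal{c_1(\s_{\mathrm{can}})}{[\Sigma]}$. Your two inputs are indeed standard: non-vanishing of $\Phi_{X,\s_{\mathrm{can}}}$ for symplectic $X$ with $b_2^+>1$ is exactly the fact from \cite{os-symp} that the paper also invokes, and conjugation symmetry of the Ozsv\'ath--Szab\'o invariants is established in \cite{os4} (with no sign issues over $\F_2$). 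What your route buys is robustness and generality: it never uses the spin hypothesis, so it proves the statement for \emph{all} symplectic $X$ with $b_2^+(X)>1$. This is a real gain even for the corollary as stated, because spin does not in fact force $c_1(\mathfrak{k})=0$, only that $c_1$ is even --- the spin elliptic surfaces $E(2k)$, $k\ge 2$, are symplectic with $b_2^+>1$ and non-torsion canonical class --- so the paper's single-\spinc-structure argument, as written, really covers manifolds like the K3 surface where the canonical class vanishes, while your conjugation trick handles the general case uniformly.
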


\begin{proof}
The $\KT$ surface has one basic class, namely  the \spinc structure satisfying $c_1(\s)=0$. If $g(\Sigma) = 1$, we can apply \eqref{eq:adjunction-nu+} to this case. If $g(\Sigma) = 0$, we stabilize $\Sigma$ once before applying \eqref{eq:adjunction-nu+}.
\end{proof} 

\begin{example}
By Theorem \ref{thm:adjunction-g4}, if $K$ is one of the trefoils or the figure-eight knot, then $K$ does not bound a disk $\Delta$ with $[\Delta]^2 > 0$ in the $\KT$ surface. 

Since the left-handed trefoil and the figure-eight knots have $\nu^+=0$, we get better bounds from Theorem \ref{thm:adjunction-nu+}. Indeed, by Corollary \ref{cor:spinsymp}, the right-handed trefoil or the figure-eight knot do not even bound a genus-1 surface $\Sigma$ with $[\Sigma]^2 > 0$ in $\KT$.

More generally, by Theorem \ref{thm:adjunction-g4} we see that for $p,q>0$ and coprime, if the torus knot $T_{p,q}$ bounds a surface $\Sigma$ of genus $g$ inside $\KT$, then $[\Sigma]^2 \leq 2g-2+(p-1)(q-1)$. However, by Theorem \ref{thm:adjunction-nu+} we get a better bound, namely $[\Sigma]^2 \leq 2g-2$ (provided $g > 0$).
\end{example}

\begin{remark}
The applications in this section were stated for the $\KT$ surface, but they apply just as well to any $4$-manifold with $b_2^+ > 0$ that admits a basic class $\mathfrak{s}$ with $c_1(\mathfrak{s}) = 0$; e.g. to any homotopy $\KT$ surface (see \cite{MorganSzabo}). 
\end{remark}

\subsection{An application using the Bauer-Furuta invariants}

\begin{proposition}
\label{prop:bf}
Let $X$ be a smooth closed $4$-manifold with $b_2^+(X) \equiv 3 \!\! \pmod{4}$, admitting a \spinc structure $\s$ such that $d(\s)=0$ and $SW_{X,\s}$ odd. Let $X'$ be another $4$-manifold with the same properties. Suppose that a knot $K \subset S^3$ is such that the mirror $\mK$ bounds a smooth, properly embedded disk $\Delta \subset \Xo$ with $[\Delta]^2 \geq 0$ and $[\Delta] \neq 0$. Then $K$ is not H-slice in $X'$.
\end{proposition}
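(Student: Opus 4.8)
The plan is to argue by contradiction, gluing the two disks into an embedded sphere inside the connected sum $X \# X'$ and then invoking the Bauer--Furuta adjunction inequality for spheres, Theorem~\ref{thm:BFadj}. The key conceptual point is that one \emph{cannot} use ordinary Seiberg--Witten invariants here: $SW$ vanishes for any connected sum with both $b_2^+>0$, so the refinement to Bauer--Furuta (Theorem~\ref{thm:BFadj} rather than Theorem~\ref{thm:FS}) is essential.

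Suppose then that $K$ were H-slice in $X'$, so that $K$ bounds a smooth, properly embedded, null-homologous disk $D \subset X' \setminus \intB$. By hypothesis $\mK$ bounds the disk $\Delta \subset \Xo$ with $[\Delta]^2 \geq 0$ and $[\Delta] \neq 0$. I would form the closed manifold $Y = X \# X' = \Xo \cup_{S^3} (X' \setminus \intB)$, the two copies of $S^3$ being identified by an orientation-reversing diffeomorphism. Since the mirror operation is realized by an orientation-reversing diffeomorphism of $S^3$, the gluing map can be chosen to carry $\del \Delta = \mK$ onto $\del D = K$; after arranging $\Delta$ and $D$ to be cylindrical near the boundary, their union $\hat\Sigma := \Delta \cup D$ is a smoothly embedded closed surface in $Y$. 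As both pieces are disks, $\hat\Sigma$ is a sphere, so $g(\hat\Sigma)=0$. Using $H_2(Y) \cong H_2(X) \oplus H_2(X')$ together with the fact that $D$ is null-homologous, the class of $\hat\Sigma$ is $([\Delta],0)$; hence $[\hat\Sigma] \neq 0$, and because the intersection form of $Y$ is the orthogonal sum of those of $X$ and $X'$, I get $[\hat\Sigma]^2 = [\Delta]^2 \geq 0$.

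It remains to produce a \spinc structure on $Y$ with non-zero Bauer--Furuta invariant, for then Theorem~\ref{thm:BFadj} (note $b_2^+(Y) = b_2^+(X) + b_2^+(X') > 1$) contradicts the existence of $\hat\Sigma$. This is the heart of the argument, and the reason the hypotheses on $SW$, on $d(\s)$, and on $b_2^+$ modulo $4$ all appear. For a single summand with $d(\s)=0$ the non-equivariant invariant lies in $\BF_{X,\s} \in \pi^{\st}_{1}(S^0) \cong \Z/2$, generated by the Hopf element $\eta$. I would invoke the relationship between the Bauer--Furuta and Seiberg--Witten invariants, in the form that when $b_2^+(X) \equiv 3 \pmod 4$ and $SW_{X,\s}$ is odd this class is the non-trivial one, i.e.\ $\BF_{X,\s} = \eta$; the congruence on $b_2^+$ enters through the parity of the complex index of the Dirac operator, which governs which stable stem detects the mod-$2$ reduction of $SW$. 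The same applies to $X'$ and its \spinc structure $\s'$.

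Finally, I would apply the connected sum formula for Bauer--Furuta invariants (\cite[Theorem 1.1]{Bauer}): $\BF_{Y, \s \# \s'}$ is the product $\BF_{X,\s} \cdot \BF_{X',\s'} = \eta \cdot \eta = \eta^2$, which is the non-zero element of $\pi^{\st}_2(S^0) \cong \Z/2$. Thus $\BF_{Y,\s\#\s'} \neq 0$, and Theorem~\ref{thm:BFadj} yields the contradiction that completes the proof. The main obstacle is this middle step: identifying the non-equivariant Bauer--Furuta invariant of each summand with $\eta$ under the stated hypotheses, and checking that the gradings line up so that Bauer's smash-product formula lands the product in $\pi^{\st}_2(S^0)$ (where $\eta^2 \neq 0$) rather than collapsing. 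Everything else — the gluing, the genus and homology computations, and the final appeal to Theorem~\ref{thm:BFadj} — is routine by comparison.
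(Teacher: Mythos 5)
Your proof is correct and follows essentially the same route as the paper's: identify $\BF_{X,\s}=\BF_{X',\s'}=\eta$ via the mod-$2$ reduction of $SW$ (using $d(\s)=0$ and $b_2^+\equiv 3 \pmod 4$), apply Bauer's connected sum formula to get $\eta^2\neq 0 \in \pi_2^{\st}(S^0)$, glue the two disks into an essential sphere of non-negative square in $X\# X'$, and contradict Theorem~\ref{thm:BFadj}. The only differences are presentational (you glue first and compute invariants second, and you spell out the homological bookkeeping of the gluing in more detail), so this matches the paper's argument.
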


\begin{proof}
Since $d(\s)=0$ and $b_2^+(X) \equiv 3 \!\! \pmod{4}$, the Bauer-Furuta invariant 
$$ \BF_{X,\s} \in \pi_1^{\st}(S^0) \cong \Z/2$$
 is the mod $2$ reduction of the Seiberg-Witten invariant; see \cite[proof of Proposition 4.4]{Bauer}. By our assumption $SW_{X,\s}$ is odd, so $\BF_{X,\s}$ is the nontrivial element $\eta$ (the Hopf map) in $\pi_1^{\st}(S^0)$.  The same is true for $\BF_{X', \s'}$. Applying the connected sum formula (\cite[Theorem 1.1]{Bauer}), we obtain
\begin{equation}
\label{eq:BFXX}
 \BF_{X \# X', \s \# \s'} = \eta^2 \neq 0 \in  \pi_2^{\st}(S^0) \cong \Z/2.
 \end{equation}
 
Suppose that $K$ bounds an H-slice disk $\Delta'$ in $(X')^\circ$. By gluing $\Delta$ to $\Delta'$ we obtain a smoothly embedded sphere $S \subset X \# X'$ with $[S]^2 \geq 0$ and $[S]\neq 0$. From \eqref{eq:BFXX} and Theorem~\ref{thm:BFadj}, we derive a contradiction.
\end{proof}

{
\renewcommand{\thethm}{\ref{thm:symp}}
\begin{theorem}
Let $X$ and $X'$ be closed symplectic $4$-manifolds satisfying $b_2^+(X) \equiv b_2^+(X') \equiv 3 \!\! \pmod{4}$. Suppose that a knot $K \subset S^3$ is such that the mirror $\mK$ bounds a smooth, properly embedded disk $\Delta \subset \Xo$ with $[\Delta]^2 \geq 0$ and $[\Delta] \neq 0$. Then $K$ is not H-slice in $X'$.
\end{theorem}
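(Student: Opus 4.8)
The plan is to deduce Theorem~\ref{thm:symp} directly from Proposition~\ref{prop:bf}, whose conclusion is word-for-word the same. It therefore suffices to verify that any closed symplectic $4$-manifold $X$ with $b_2^+(X) \equiv 3 \pmod 4$ admits a \spinc structure $\s$ with $d(\s) = 0$ and $SW_{X, \s}$ odd; running the identical check on $X'$ then lets us invoke the proposition verbatim.

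The natural candidate is the canonical \spinc structure $\s_{\mathrm{can}}$ associated to an $\omega$-compatible almost complex structure, characterized by $c_1(\s_{\mathrm{can}}) = -c_1(TX)$. First I would note that the congruence $b_2^+(X) \equiv 3 \pmod 4$ forces $b_2^+(X) \geq 3 > 1$, so Taubes' theorem \cite{Taubes1} applies and yields $SW_{X, \s_{\mathrm{can}}} = \pm 1$, which is in particular odd. Next I would compute $d(\s_{\mathrm{can}})$: using the standard identity $c_1(TX)^2 = 2\chi(X) + 3\sigma(X)$ for almost complex $4$-manifolds, we have $c_1(\s_{\mathrm{can}})^2 = c_1(TX)^2 = 2\chi(X) + 3\sigma(X)$, whence
\[
d(\s_{\mathrm{can}}) = \frac{c_1(\s_{\mathrm{can}})^2 - (2\chi(X) + 3\sigma(X))}{4} = 0,
\]
exactly as the proposition requires. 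The same argument applied to $X'$ produces a \spinc structure $\s'$ with $d(\s') = 0$ and $SW_{X', \s'}$ odd, and the theorem follows from Proposition~\ref{prop:bf}.

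There is no genuine obstacle at this stage: the entire analytic content has already been packaged into Proposition~\ref{prop:bf} (resting in turn on the Bauer--Furuta connected-sum and blow-up formulas and on Theorem~\ref{thm:BFadj}). The only points demanding care are invoking Taubes' nonvanishing result in the sharp form that gives $SW = \pm 1$---so that the invariant is \emph{odd}, not merely nonzero, which is what the reduction to $\pi_1^{\st}(S^0) \cong \Z/2$ needs---and recording that the canonical \spinc structure has vanishing dimension $d(\s_{\mathrm{can}}) = 0$, since it is precisely this vanishing that places $\BF_{X, \s_{\mathrm{can}}}$ in $\pi_1^{\st}(S^0)$ where it can be identified with the mod $2$ reduction of $SW$.
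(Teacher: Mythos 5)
Your proposal is correct and follows exactly the paper's route: the paper also deduces Theorem~\ref{thm:symp} from Proposition~\ref{prop:bf} by citing that the canonical class $\mathfrak{k}$ of a symplectic manifold satisfies $d(\mathfrak{k})=0$ and $SW_{X,\mathfrak{k}}=\pm 1$ (Taubes). Your write-up merely makes explicit the computation $c_1(\s_{\mathrm{can}})^2 = 2\chi(X)+3\sigma(X)$ that the paper leaves implicit, and any sign-convention ambiguity for the canonical \spinc structure is harmless since conjugation preserves $c_1^2$ and $|SW|$.
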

\addtocounter{thm}{-1}
}

\begin{proof}
This follows from Proposition~\ref{prop:bf}, using the facts that the canonical class $\mathfrak{k}$ of a symplectic manifold satisfies $d(\mathfrak{k}) = 0$ and $SW_{X, \mathfrak{k}} =\pm 1$; cf.~\cite{Taubes}. 
\end{proof}

{
\renewcommand{\thethm}{\ref{cor:exotica}}
\begin{corollary}
There exist smooth, homeomorphic four-manifolds $X$ and $X'$ and a knot $K \subset S^3$ that is H-slice in $X$ but not in $X'$. For example, one can take $$X=\#3\CP \# 20 \bCP, \ \ X' = K3 \# \bCP,$$ and $K$ to be the right-handed trefoil. 
\end{corollary}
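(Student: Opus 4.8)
The plan is to verify three claims: that $X=\#3\CP\#20\bCP$ and $X'=\KT\#\bCP$ are homeomorphic, that the right-handed trefoil (RHT) is H-slice in $X$, and that RHT is not H-slice in $X'$.

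For the homeomorphism, I would note that both manifolds are smooth and simply connected, so their Kirby--Siebenmann invariants vanish; by Freedman's classification it then suffices to compare intersection forms. The form of $X$ is the odd indefinite form $3\langle 1\rangle\oplus 20\langle -1\rangle$, of rank $23$ and signature $-17$. The form of $X'$ is $Q_{\KT}\oplus\langle -1\rangle$: although $Q_{\KT}$ is even, the summand $\langle -1\rangle$ contributed by $\bCP$ makes the total form odd, and it is indefinite of rank $22+1=23$ and signature $-16-1=-17$. Since an indefinite odd unimodular form is determined up to isomorphism by its rank and signature, both forms are isomorphic to $3\langle 1\rangle\oplus 20\langle -1\rangle$, and hence $X$ and $X'$ are homeomorphic.

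To see that RHT is H-slice in $X$, I would start from Example~\ref{ex:RHT}, which gives a null-homologous disk bounded by RHT in $\CP\setminus\intB$. Writing $X=\CP\#(\#2\CP\#20\bCP)$ and forming the connected sum in the interior, away from this disk, the disk persists in $X\setminus\intB$; it remains null-homologous because the inclusion $H_2(\CP)\hookrightarrow H_2(X)$ sends its (vanishing) class to $0$. More generally this records the fact that H-sliceness is preserved under connected sum with any closed manifold, so RHT is H-slice in $X$.

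The delicate step is showing that RHT is not H-slice in $X'=\KT\#\bCP$, and here the main obstacle is that $\KT\#\bCP$ is not symplectic, so Theorem~\ref{thm:symp} cannot be applied directly; instead I would invoke Proposition~\ref{prop:bf} and check its Seiberg--Witten hypotheses for $\KT\#\bCP$ by hand. I would apply the proposition with $K=\text{RHT}$ (so $\mK=\text{LHT}$), with the manifold in which $\mK$ bounds a disk taken to be $\KT$, and with $X'=\KT\#\bCP$. Example~\ref{ex:LHTinK3} supplies the required disk: LHT bounds $\Delta\subset\KT\setminus\intB$ with $[\Delta]^2=0\ge 0$ and $[\Delta]\neq 0$. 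The manifold $\KT$ is symplectic with canonical class $\mathfrak{k}$ satisfying $d(\mathfrak{k})=0$, $SW_{\KT,\mathfrak{k}}=\pm 1$ odd, and $b_2^+(\KT)=3\equiv 3\pmod 4$. For $X'=\KT\#\bCP$ I would verify the same properties using the blow-up formula: $b_2^+=3\equiv 3\pmod 4$; the spin$^c$ structure $\mathfrak{k}\#\s_E$ has $d(\mathfrak{k}\#\s_E)=d(\mathfrak{k})=0$ by a short computation from the definition of $d$; and the blow-up formula \eqref{eq:blowup} (together with its Seiberg--Witten analogue) shows $SW_{\KT\#\bCP,\mathfrak{k}\#\s_E}=SW_{\KT,\mathfrak{k}}=\pm 1$ is odd, equivalently that the associated Bauer--Furuta invariant equals $\eta$. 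With both hypotheses verified, Proposition~\ref{prop:bf} yields that RHT is not H-slice in $\KT\#\bCP$, which completes the argument; I would keep a careful eye on the mirror/orientation conventions throughout, to ensure that the knot bounding a disk in $\KT$ is precisely the mirror of the knot whose H-sliceness is being obstructed.
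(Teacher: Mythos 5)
Your proof is correct, and its skeleton matches the paper's: Freedman's theorem for the homeomorphism (your explicit rank/signature/parity computation is a fine expansion of the paper's one-line citation), Example~\ref{ex:RHT} plus persistence of H-sliceness under connected sum for H-sliceness in $X$, and Example~\ref{ex:LHTinK3} plus a gauge-theoretic obstruction for non-H-sliceness in $X'$. The one genuine divergence is in the last step, and it rests on a false premise: you assert that $\KT \# \bCP$ is \emph{not} symplectic and that Theorem~\ref{thm:symp} therefore cannot be applied. In fact $\KT \# \bCP$ is diffeomorphic to the complex blow-up of the K3 surface at a point, hence is a complex projective (K\"ahler, hence symplectic) surface; this is precisely how the paper argues, applying Theorem~\ref{thm:symp} to the pair $(\KT,\, \KT\#\bCP)$ in one sentence. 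You are likely conflating blow-ups with Taubes' theorem that a connected sum of two symplectic four-manifolds each with $b_2^+>0$ admits no symplectic structure; that theorem is silent here because $b_2^+(\bCP)=0$.

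Fortunately, your workaround never uses the false premise and is sound: you apply Proposition~\ref{prop:bf} with $\KT$ in the role of the manifold containing the disk for $\mK = \mathrm{LHT}$, and you verify its hypotheses for $X'=\KT\#\bCP$ by hand, namely $b_2^+ = 3 \equiv 3 \pmod 4$, $d(\mathfrak{k}\#\s_E)=0$ (your computation is right: $c_1^2=-1$ and $2\chi+3\sigma = 50-51=-1$), and $SW_{\KT\#\bCP,\,\mathfrak{k}\#\s_E} = SW_{\KT,\mathfrak{k}} = \pm 1$ via the Seiberg--Witten blow-up formula (or, even more directly, $\BF_{\KT\#\bCP,\,\mathfrak{k}\#\s_E} = \BF_{\KT,\mathfrak{k}} = \eta$ by \eqref{eq:blowup}). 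This amounts to unwinding the paper's own reduction, since Theorem~\ref{thm:symp} is deduced from Proposition~\ref{prop:bf} by citing Taubes' nonvanishing theorem for symplectic manifolds; your route replaces that citation, for the blow-up, by an explicit computation. Both are valid; the paper's is shorter, and you should correct the side remark about symplecticity.
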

\addtocounter{thm}{-1}
}

\begin{proof}
Note that $X =\#3\CP \# 20 \bCP$ and $X' = K3 \# \bCP$ are simply connected, smooth, closed four-manifolds with the same intersection form, so they are homeomorphic by Freedman's theorem \cite{Freedman}.

Let $K$ be the right-handed trefoil. Then $K$ is H-slice in $X$ because it is already H-slice in $\CP$; see Example~\ref{ex:RHT}.

Recall from Example~\ref{ex:LHTinK3} that the left-handed trefoil $\mK$ bounds a slice disk $\Delta$ in K3 with $[\Delta]^2 =0$ and $[\Delta] \neq 0$. Since both K3 and its blow-up $X'=K3 \# \bCP$ are complex projective surfaces (hence symplectic), we can apply Theorem \ref{thm:symp} to deduce that $K$ is not H-slice in $X'$.
\end{proof}

\section{Relative Donald-Vafaee obstructions}
\label{sec:DVT}
The following theorem is a generalization of the results of \cite{DonaldVafaee} and \cite{Truong} (who did the case $X=S^3$) to spin 4-manifolds.

{
\renewcommand{\thethm}{\ref{thm:DV}}
\begin{theorem}
Let $K \subset S^3$ be an $H$-slice knot in a closed spin 4-manifold $X$, and let $W$ be a spin 2-handlebody with $\de W = S^3_0(K)$. If $b_2(X) + b_2(W) \neq 1, 3, 23$, then
\[
b_2(X) + b_2(W) \geq \frac{10}{8} \cdot |\sigma(X) - \sigma(W)| + 5.
\]
\end{theorem}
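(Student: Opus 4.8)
The plan is to manufacture a closed spin $4$-manifold $M$ out of the H-slice disk and the handlebody $W$, and then to run Truong's argument with the $10/8+4$ theorem of Hopkins--Lin--Shi--Xu \cite{HLSX}, which I will use in the form: a closed spin $4$-manifold $M$ with $b_2(M) \notin \{0,2,22\}$ satisfies $b_2(M) \geq \frac{10}{8}|\sigma(M)| + 4$. The whole theorem is then a homological bookkeeping exercise feeding into this one black box.

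First I would build the disk exterior. Let $\Delta \subset \Xo$ be a null-homologous, properly embedded disk with $\partial \Delta = K$, and set $Z = \Xo \setminus \nbd(\Delta)$, where $\nbd(\Delta) \cong \Delta \times D^2$ is a tubular neighborhood. Because $[\Delta]=0$, the normal framing of $\Delta$ restricts to the $0$-framing of $K$, so $\partial Z = S^3_0(K)$. Since $[\Delta] = 0 \in H_2(\Xo, \partial\Xo)$, every class in $H_2(\Xo)$ has zero algebraic intersection with $\Delta$; a Mayer--Vietoris argument then shows that $Z \hookrightarrow \Xo$ induces an isomorphism on $H_2$ respecting the intersection form, so $b_2(Z) = b_2(X)$ and $\sigma(Z) = \sigma(X)$. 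The meridian $\mu$ of $\Delta$ contributes a new free summand, giving $b_1(Z) = b_1(X) + 1$, and $Z$ inherits a spin structure from $X$.

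Next I would glue and compute the invariants. Since $W$ is a spin $2$-handlebody, $H_1(W) = 0$, so $\mu$ bounds in $W$ and the spin structure induced on $S^3_0(K) = \partial W$ is the one bounding the disk exterior; hence $M := Z \cup_{S^3_0(K)} \overline{W}$ is a closed spin $4$-manifold. By Novikov additivity, $\sigma(M) = \sigma(Z) - \sigma(W) = \sigma(X) - \sigma(W)$. For the second Betti number, $\chi(Z) = \chi(X) - 2$, $\chi(W) = 1 + b_2(W)$ and $\chi(S^3_0(K)) = 0$ give $\chi(M) = \chi(X) + b_2(W) - 1$; a short Mayer--Vietoris computation shows $\mu$ dies in $W$, so $b_1(M) = b_1(X)$. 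Comparing $\chi(M) = 2 - 2b_1(M) + b_2(M)$ with $\chi(X) = 2 - 2b_1(X) + b_2(X)$ then yields
\[
b_2(M) = b_2(X) + b_2(W) - 1.
\]
As a consistency check, $b_2(X)$ is even because $X$ is closed spin, so $b_2(M)$ is even exactly when $b_2(X)+b_2(W)$ is odd, matching the parity forced on a closed spin $M$.

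Finally I would apply the $10/8+4$ theorem to $M$. Unless $b_2(M) \in \{0,2,22\}$ one obtains $b_2(M) \geq \frac{10}{8}|\sigma(M)| + 4$; substituting the two displayed formulas, the exceptional set becomes $b_2(X) + b_2(W) \in \{1,3,23\}$ and the inequality rearranges to $b_2(X) + b_2(W) \geq \frac{10}{8}|\sigma(X) - \sigma(W)| + 5$, as desired. I expect the principal subtlety to be the spin bookkeeping on the gluing region: $S^3_0(K)$ carries two spin structures, and one must confirm that the one induced by the disk exterior $Z$ agrees with the one for which $W$ is spin, so that $M$ is genuinely spin and the Furuta-type obstruction applies. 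A secondary point needing care is verifying that excising the null-homologous $\Delta$ leaves the intersection form of $X$ intact and produces boundary precisely $S^3_0(K)$; both reduce to the vanishing of $[\Delta]$ and the resulting $0$-framing.
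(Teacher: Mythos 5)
Your skeleton is the same as the paper's: your $Z$ is exactly the paper's $V$ (the complement in $X$ of the $0$-trace $X_0(K)=\nbd(\Delta)\cup B^4$), your glued manifold $M=Z\cup\overline{W}$ is, up to orientation, the closed manifold the paper feeds to the $10/8+4$ theorem, and your Betti-number and signature bookkeeping agrees with the paper's. The gap is precisely the step you flag as the ``principal subtlety'': the spin matching. Your justification, that $H_1(W)=0$ forces the spin structure induced by $W$ on $S^3_0(K)$ to be ``the one bounding the disk exterior,'' is not a valid deduction, and the claim behind it is false. First, the disk exterior does not induce a single well-defined spin structure on its boundary: $Z$ is not simply connected, and (as the paper's key lemma shows) $Z$ in fact induces \emph{both} spin structures on $S^3_0(K)$, so ``the one bounding the disk exterior'' is ambiguous. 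Second, the fact that $\mu$ bounds a surface $F\subset W$ only pins down the restriction of the spin structure to $\mu$ relative to the framing induced by $F$; framings coming from surfaces in different fillings cannot be canonically compared, and nothing forces all spin $2$-handlebody fillings of $S^3_0(K)$ to induce the same spin structure. Concretely, for $K$ the unknot one has $S^3_0(K)=S^1\times S^2$, and $W=S^2\times D^2$ (a spin $2$-handlebody) with boundary identified with $S^1\times S^2$ via the Gluck twist induces the spin structure that does \emph{not} extend over the $0$-trace: this is exactly why $(S^2\times D^2)\cup_{\mathrm{Gluck}}(S^2\times D^2)\cong\CP\#\bCP$ is not spin, while the identity gluing gives $S^2\times S^2$. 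So, as written, your $M$ may fail to be spin, and the Furuta-type inequality cannot be invoked.

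The paper closes this gap in the opposite direction: rather than identifying which structure $W$ induces, it proves that \emph{both} spin structures on $S^3_0(K)$ extend over the disk exterior, so that whichever one $W$ induces, the glued manifold is spin. This is where the real content and the H-sliceness hypothesis enter: in the Mayer--Vietoris sequence with $\F_2$-coefficients for $X=X_0(K)\cup Z$, the connecting map $H_2(X;\F_2)\to H_1(S^3_0(K);\F_2)$ sends a class $[F]$ to $([F]\cdot[\Delta])\,\mu$, which vanishes because $[\Delta]=0$; hence $H_1(S^3_0(K);\F_2)\to H_1(Z;\F_2)$ is injective, and dually the restriction $H^1(Z;\F_2)\to H^1(S^3_0(K);\F_2)$ is surjective, which is exactly the statement that both spin structures extend over $Z$. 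Notice that in your write-up the hypothesis $[\Delta]=0$ is used only for the $0$-framing and for preserving $(b_2,\sigma)$; the fact that it never touches your spin argument is a symptom of the missing step, since for a slice disk that is not null-homologous the meridian can die in $H_1(Z;\F_2)$, only one spin structure then extends over $Z$, and it need not be the one induced by $W$.
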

\addtocounter{thm}{-1}
}

\begin{proof}
The proof is analogous to \cite{DonaldVafaee}.
A neighborhood of the slice disk $\Delta$ in $\Xo$ together with the removed $\intB$ gives an embedding of $X_0(m(K))$, the trace of the $0$-surgery on the mirror $m(K)$, inside $X$. Note that $\del X_0(m(K)) = -S^3_0(K)$ and we have a splitting $X = X_0(m(K)) \cup V$.
The manifold $V$ is spin (by restricting the spin structure on $X$). We will show in the next paragraph that the map
\[
H^1(V;\F_2) \to H^1(\de V; \F_2)
\]
is surjective, which implies that both spin structures on $S^3_0(K)$ extend to $V$.

Consider the following portion of the Mayer-Vietoris long exact sequence in homology associated to $X = X_0(m(K)) \cup V$:
\[
H_2(X; \F_2) \xrightarrow{f} H_1(S^3_0(K); \F_2) \xrightarrow{g} H_1(X_0(m(K)); \F_2) \oplus H_1(V; \F_2).
\]
Since $H_1(S^3_0(K); \F_2)$ is 1-dimensional, generated by the meridian $\mu$, and $H_1(X_0(m(K)); \F_2) = 0$, we get
\[
H_2(X; \F_2) \xrightarrow{f} \F_2\langle\mu\rangle \xrightarrow{g} H_1(V; \F_2).
\]

If $F$ is a closed surface in $X^\circ \subset X$ transverse to $\Delta$, then $F \cap S^3_0(K)$ consists of copies of the meridian $\mu$, and each copy of $\mu$ corresponds to an intersection point $F \cap \Delta$. Thus, $f([F]) = ([F]\cdot[\Delta])\mu \pmod2$. Since $[\Delta] = 0$ (with $\Z$ coefficients, and hence with $\F_2$ coefficients), the map $f$ is vanishing, and by exactness
\[
g \colon H_1(S^3_0(K);\F_2) \to H_1(V;\F_2)
\]
is injective.
By taking duals, the restriction map
\[
H^1(V;\F_2) \to H^1(\de V; \F_2)
\]
is surjective. Thus, both spin structures on $S^3_0(K)$ extend to $V$.

We now consider $Z = (-V) \cup W$. The spin structure on $W$ extends to a spin structure on $Z$. Moreover, Mayer-Vietoris shows that $b_2(Z) = b_2(V) + b_2(W) - 1 = b_2(X) +  b_2(W) -1$. By Novikov's additivity theorem $\sigma(Z) = -\sigma(V) + \sigma(W) = -\sigma (X) + \sigma(W)$.
Thus, by \cite{HLSX}, we get
\[
b_2(X) + b_2(W) \geq \frac{10}{8} \cdot |\sigma(X) - \sigma(W)| + 5. \qedhere
\]
\end{proof}

\begin{figure}
\begingroup%
  \makeatletter%
  \providecommand\color[2][]{%
    \errmessage{(Inkscape) Color is used for the text in Inkscape, but the package 'color.sty' is not loaded}%
    \renewcommand\color[2][]{}%
  }%
  \providecommand\transparent[1]{%
    \errmessage{(Inkscape) Transparency is used (non-zero) for the text in Inkscape, but the package 'transparent.sty' is not loaded}%
    \renewcommand\transparent[1]{}%
  }%
  \providecommand\rotatebox[2]{#2}%
  \newcommand*\fsize{\dimexpr\f@size pt\relax}%
  \newcommand*\lineheight[1]{\fontsize{\fsize}{#1\fsize}\selectfont}%
  \ifx\svgwidth\undefined%
    \setlength{\unitlength}{206.58172367bp}%
    \ifx\svgscale\undefined%
      \relax%
    \else%
      \setlength{\unitlength}{\unitlength * \real{\svgscale}}%
    \fi%
  \else%
    \setlength{\unitlength}{\svgwidth}%
  \fi%
  \global\let\svgwidth\undefined%
  \global\let\svgscale\undefined%
  \makeatother%
  \begin{picture}(1,0.34853191)%
    \lineheight{1}%
    \setlength\tabcolsep{0pt}%
    \put(0,0){\includegraphics[width=\unitlength,page=1]{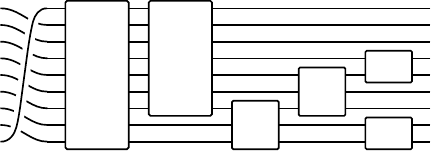}}%
    \put(0.18045842,0.15977397){\color[rgb]{0,0,0}\makebox(0,0)[lt]{\lineheight{0}\smash{\begin{tabular}[t]{l}$+1$\end{tabular}}}}%
    \put(0.38583184,0.19987073){\color[rgb]{0,0,0}\makebox(0,0)[lt]{\lineheight{0}\smash{\begin{tabular}[t]{l}$-1$\end{tabular}}}}%
    \put(0.55795432,0.04437371){\color[rgb]{0,0,0}\makebox(0,0)[lt]{\lineheight{0}\smash{\begin{tabular}[t]{l}$-1$\end{tabular}}}}%
    \put(0.71239478,0.12261132){\color[rgb]{0,0,0}\makebox(0,0)[lt]{\lineheight{0}\smash{\begin{tabular}[t]{l}$-1$\end{tabular}}}}%
    \put(0.86886967,0.17835555){\color[rgb]{0,0,0}\makebox(0,0)[lt]{\lineheight{0}\smash{\begin{tabular}[t]{l}$-1$\end{tabular}}}}%
    \put(0.86992606,0.02383649){\color[rgb]{0,0,0}\makebox(0,0)[lt]{\lineheight{0}\smash{\begin{tabular}[t]{l}$-1$\end{tabular}}}}%
  \end{picture}%
\endgroup%

\caption{The closure of the braid above is the topologically slice knot from \cite[Figure 3]{DonaldVafaee}.
The number within each box indicates the number of full twists performed there.
}
\label{fig:DV}
\end{figure}

\begin{remark}
A version of Theorem~\ref{thm:DV} still holds if we allow $W$ to be any spin $4$-manifold with $\del W = S^3_0(K)$ instead of a $2$-handlebody. In that case, we obtain the weaker inequality
\[
b_2(X) + b_2(W) \geq \frac{10}{8} \cdot |\sigma(X) - \sigma(W)| + 4.
\]
Indeed, the same proof applies, except that $b_2^+(Z)$ can now be either $b_2^+(V) + b_2^+(W)$ or $b_2^+(V) + b_2^+(W) - 1$.
\end{remark}

{
\renewcommand{\thethm}{\ref{cor:DV}}
\begin{corollary}
Let $\KDV$ be the topologically slice knot in \cite[Figure 3]{DonaldVafaee}, which is the closure of the braid in Figure \ref{fig:DV}. Then $\KDV$ is not $H$-slice in the $\KT$ surface.
\end{corollary}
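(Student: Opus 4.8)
The plan is to apply Theorem~\ref{thm:DV} with $X = \KT$, using that the K3 surface is spin with $b_2(\KT) = 22$ and $\sigma(\KT) = -16$. Assume for contradiction that $\KDV$ is $H$-slice in $\KT$. Then, for every spin $2$-handlebody $W$ with $\de W = S^3_0(\KDV)$ for which $b_2(\KT) + b_2(W) \neq 1,3,23$, Theorem~\ref{thm:DV} forces
\[
b_2(\KT) + b_2(W) \geq \frac{10}{8}\,\bigl|\sigma(\KT) - \sigma(W)\bigr| + 5 .
\]
It therefore suffices to exhibit a single such $W$ for which this inequality fails.

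The required $W$ is the spin $2$-handlebody underlying the Donald--Vafaee obstruction: the braid in Figure~\ref{fig:DV} is engineered precisely so that $S^3_0(\KDV)$ bounds a spin $2$-handlebody $W$ with $\sigma(W) = 16$. Since $\partial W = S^3_0(\KDV)$ has $H_1 = \Z$, the intersection form of $W$ has corank one, so $b_2^+(W) \geq 16$ forces $b_2(W) \geq 17$; on the other hand, because Donald and Vafaee obstruct the sliceness of $\KDV$ already via Furuta's original $10/8$ theorem (the $X=S^4$ case of Theorem~\ref{thm:DV} with Furuta's constant), their $W$ satisfies $b_2(W) \leq 22$. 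Hence $b_2(\KT) + b_2(W) \in [39,44]$, which in particular avoids $1,3,23$, so the hypothesis of Theorem~\ref{thm:DV} applies. The computation then reads
\[
\frac{10}{8}\,\bigl|\sigma(\KT) - \sigma(W)\bigr| + 5 = \frac{10}{8}\,\bigl|{-16} - 16\bigr| + 5 = \frac{10}{8}\cdot 32 + 5 = 45 > 44 \geq b_2(\KT) + b_2(W),
\]
so the inequality of Theorem~\ref{thm:DV} is violated and $\KDV$ cannot be $H$-slice in $\KT$.

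The main obstacle, and the feature distinguishing this argument from the four-ball case of Donald--Vafaee and Truong, is the \emph{sign} of $\sigma(W)$. In the $S^4$ case only $|\sigma(W)|$ enters, so either sign obstructs sliceness; here, because $\sigma(\KT) = -16 \neq 0$, the two signs behave completely differently. If $W$ had $\sigma(W) = -16$ instead, then $|\sigma(\KT) - \sigma(W)| = 0$ and Theorem~\ref{thm:DV} would only yield $b_2(\KT) + b_2(W) \geq 5$, giving no information whatsoever. Thus the crux is to verify, by tracking the orientation conventions in Donald and Vafaee's handle description of the braid in Figure~\ref{fig:DV}, that their $W$ genuinely has \emph{positive} signature $+16$, so that $\sigma(\KT)$ and $\sigma(W)$ have opposite signs and $|\sigma(\KT) - \sigma(W)| = 32$ is as large as possible. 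I note finally that one cannot evade this sign issue by passing to the mirror: replacing $\KDV$ by $\m{\KDV}$ and $\KT$ by $\m{\KT}$ simultaneously reverses the signs of both $\sigma(\KT)$ and $\sigma(W)$, leaving their difference—and hence the entire computation—unchanged.
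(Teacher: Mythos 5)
Your proposal is correct and follows essentially the same route as the paper: apply Theorem~\ref{thm:DV} to the Donald--Vafaee spin $2$-handlebody $W$ with $\partial W = S^3_0(\KDV)$ and $\sigma(W)=+16$, and check that the resulting inequality fails. The only difference is bookkeeping: the paper simply quotes $b_2(W)=21$ and $\sigma(W)=16$ from \cite[Example 3.4]{DonaldVafaee}, so the contradiction reads $43 \geq 45$, whereas you pin $b_2(W)$ into the interval $[17,22]$ by a corank-one argument plus the observation that Donald--Vafaee's own $10/8$ obstruction must have succeeded for their $W$ --- a correct, if roundabout, substitute for citing their stated value.
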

\addtocounter{thm}{-1}
}

\begin{proof}
It is shown in \cite[Example 3.4]{DonaldVafaee} that $S^3_0(\KDV)$ bounds a 2-handlebody $W$ with $b_2(W) = 21$ and $\sigma(W) = 16$.
Since $b_2(\KT) = 22$ and $\sigma(\KT) = -16$, if the knot were $H$-slice in $\KT$ we would get
\[
22+21 \geq \frac{10}8 \cdot 32 + 5,
\]
which is false.
\end{proof}

\begin{remark}
What is really needed for Corollary~\ref{cor:DV} is the non-existence of a smooth, closed, simply connected $4$-manifold with intersection form $4(-E_8) \oplus 5 \left(\begin{smallmatrix}
 0 & 1 \\ 1 & 0  \end{smallmatrix}\right)$. This does not quite follow from Furuta's $10/8 + 2$ theorem \cite{Furuta}, but it was known before the general $10/8 + 4$ theorem from \cite{HLSX}. Indeed, it is due to Furuta, Kametani and Matsue \cite{FKM2}.
\end{remark}

\section{Open problems}
\label{sec:open}

Theorems~\ref{thm:Arf}, \ref{thm:CN}, \ref{thm:symp} and \ref{thm:DV} provide obstructions for a knot to be H-slice in an indefinite $4$-manifold. For example, H-slice knots $K$ in the K3 surface must satisfy $\Arf(K)=0$ and $\sigma_K(\omega) \in [-6, 38]$ for all $\omega \in \Se$. Theorems~\ref{thm:FK}, \ref{thm:rokhlin} and \ref{thm:adjunction-nu+} give additional constraints on the homology class of a slice disk in such a four-manifold. One can ask whether, by these and other methods, one can rule out all homology classes for a slice disk for a knot $K$ in an indefinite $4$-manifold, in the spirit of Yasuhara's proof that $T_{2,-15}$ is not slice in $\CP$ \cite{YasuharaExample}. In particular, we raise the following:  

\begin{question}
Is there a knot that is not slice in the K3 surface?
\end{question}

Also, we can ask whether the analogue of Corollary~\ref{cor:exotica} holds for slice knots (instead of H-slice knots).

\begin{question}
Can the set of slice knots detect exotic smooth structures? In other words, do there exist smooth, homeomorphic four-manifolds $X$ and $X'$ and a knot $K \subset S^3$ that is slice in $X$ but not in $X'$?
\end{question}

Finally, in view of Corollaries~\ref{cor:exotica} and \ref{cor:DV}, we propose the following problem.

\begin{question}
Is it true that for every closed $4$-manifold $X$, there is a knot $K$ that is topologically but not smoothly H-slice in $X$?
\end{question}

\bibliographystyle{custom}
\bibliography{topology}
\end{document}